% ceci est la version finale de l'article intitulé : 
% K-stability of Fano spherical varieties
% le 12 juillet 2018
% +++ 26 novembre 2019 : corrected typos spotted by Delgove
\documentclass{amsart}

\usepackage{graphics,graphicx}
\usepackage{tikz-cd}
\usetikzlibrary{calc}
\usetikzlibrary{arrows} 

\usepackage[utf8]{inputenc}
\usepackage[T1]{fontenc}

\usepackage{amsmath}
\usepackage{amsthm}
\usepackage{amssymb}

\usepackage{xspace}
\usepackage[english,french]{babel}

\usepackage[backref=page,colorlinks=true,allcolors=blue]{hyperref}

\newtheorem{thm}{Theorem}[section]
\newtheorem{cor}[thm]{Corollary}
\newtheorem{lem}[thm]{Lemma}
\newtheorem{prop}[thm]{Proposition}
\newtheorem*{question}{Question}
\newtheorem*{thma}{Theorem A}
\newtheorem*{thmb}{Theorem B}
\newtheorem*{thmc}{Theorem C}

\theoremstyle{definition}
\newtheorem{defn}[thm]{Definition}
\newtheorem{rem}[thm]{Remark}
\newtheorem{exa}[thm]{Example}

\begin{document}

\selectlanguage{english}

\title[K-Stability of Fano Spherical Varieties]{
K-Stability of Fano Spherical Varieties
\\
%titre en français :
(K-Stabilité des variétés sphériques Fano)
}

\author{Thibaut Delcroix}

\address{D\'epartement de math\'ematiques et applications, 
\'Ecole Normale Sup\'erieure, CNRS, PSL Research University, 
75005 Paris, France}

\email{thibaut.delcroix@ens.fr}

\subjclass[2010]{14M27,32Q26,32Q20}

\keywords{Spherical variety, Fano variety, K-stability, Kähler-Einstein metric, 
Kähler-Ricci solitons, Test configuration, Futaki invariant}

\date{}

\maketitle

\begin{abstract}
We prove a combinatorial criterion for K-stability of a $\mathbb{Q}$-Fano 
spherical variety 
with respect to equivariant special test configurations, in terms of its moment 
polytope and some combinatorial data associated to the open orbit. Combined 
with the equivariant version of the Yau-Tian-Donaldson conjecture for Fano manifolds
proved by Datar and Székelyhidi, it yields a criterion for the existence of a 
Kähler-Einstein metric on a spherical Fano manifold. The results hold also for 
modified K-stability and existence of Kähler-Ricci solitons. 
\end{abstract}

\begin{otherlanguage}{french}
\begin{abstract}
Nous prouvons, pour une variété sphérique $\mathbb{Q}$-Fano, un critère 
combinatoire de K-stabilité par rapport aux configurations test spéciales 
équivariantes, exprimé en fonction de son polytope moment et d'une donnée 
combinatoire associée à l'orbite ouverte. En utilisant la version équivariante 
de la conjecture de Yau-Tian-Donaldson prouvée par Datar et Székelyhidi, 
cela devient un critère d'existence de métriques Kähler-Einstein sur les 
variétés sphériques Fano lisses. Les résultats s'appliquent également à 
la K-stabilité modifiée et à l'existence de solitons de Kähler-Ricci.
\end{abstract}
\end{otherlanguage}

\section*{Introduction}

Kähler-Einstein metrics on a Kähler manifold are the solutions (if they 
exist) of a highly non linear second order partial differential equation on 
the manifold. It is not clear at the moment under which conditions the 
equation admits solutions on a Fano manifold. 
In the recent years a major advance in this direction has been made through 
the resolution of the Yau-Tian-Donaldson conjecture in the Fano case, 
by Chen, Donaldson, and Sun \cite{CDS15a,CDS15b,CDS15c}. 
This conjecture states in its general form that the existence of some 
canonical metrics on a Kähler manifold should be related to the algebro-geometric 
condition of K-stability on the manifold. 

The K-stability condition is a condition involving the positivity of 
numerical invariants
associated to polarized one parameter degenerations of the manifold, equipped with an 
action of $\mathbb{C}^*$, called \emph{test configurations}. 
In the Fano case, it was proved by Li and Xu \cite{LX14} 
(and also Chen, Donaldson and Sun) 
that it is enough to consider test 
configurations with normal central fiber, which are called \emph{special} test 
configurations. 
Other proofs of the Yau-Tian-Donaldson conjecture were obtained by 
Tian \cite{Tia15}, Datar and Székelyhidi \cite{DS15}, Chen, Sun and Wang \cite{CSW}, 
Berman, Boucksom and Jonsson \cite{BBJ}.
The work of Datar and Székelyhidi is of special interest to us as it allows us to 
take into account automorphisms of the manifold, by considering only equivariant 
test configurations, and extends the result to 
Kähler-Ricci solitons. 
 
The necessity of the K-stability condition with respect to special test configurations 
was established earlier by Tian who 
provided an example of Fano manifold with vanishing Futaki invariant 
but no Kähler-Einstein metrics \cite{Tia97}. 
It was not clear at first if the K-stability condition could be used to prove 
the existence of Kähler-Einstein metrics on explicit examples of Fano manifolds. 
One aim of the present article is to provide an illustration of the power 
of the approach to the existence of Kähler-Einstein metrics \emph{via} K-stability, 
on highly symmetric manifolds. 

Namely, we obtain a criterion for the existence of Kähler-Einstein metrics 
on a Fano spherical manifold, involving only the moment polytope and the 
valuation cone of the spherical manifold. Both are classical and central 
objects in the theory of spherical varieties. 
The class of spherical varieties is a very large class of highly symmetric 
varieties, which contains toric varieties, generalized flag manifolds, 
homogeneous toric bundles, biequivariant compactifications of reductive groups.
For all of these subclasses for which a criterion was known, 
our result specializes to the same criterion (compare with \cite{WZ04,PS10,DelTh}).  
Examples of new varieties to which our criterion applies include colored 
horospherical varieties and symmetric varieties (for examples the varieties 
constructed in \cite{DCP83}).
Let us mention here the work of Ilten and Süss on Fano manifolds with an 
action of a torus with complexity one \cite{IS}, and also the discussion in  
\cite[Section 4]{DS15}, which were to the author's knowledge the first applications 
of K-stability as a sufficient criterion for explicit examples.

Another aim of this article is to provide a framework for understanding 
the K-stability condition in this class of spherical varieties, which should
lead to a better understanding of K-stability in general. 
The author obtained in \cite{DelTh,DelKE} an example of group compactification with 
no Kähler-Einstein metric but vanishing Futaki invariant, which is furthermore not 
K-semistable unlike the Mukai-Umemura type example from \cite{Tia97}.
This is evidence that non trivial K-stability phenomena appear in the class
of spherical Fano manifolds, which was not true for toric Fano manifolds.

Before stating the main result of the article, let us introduce some notations, 
the moment polytope and the valuation cone.

Let $G$ be a complex connected reductive algebraic group.
Let $B$ be a Borel subgroup of $G$ and $T$ a maximal torus of $B$.
Let $\mathfrak{X}(T)$ denote the group of algebraic characters of $T$.
Denote by $\Phi\subset \mathfrak{X}(T)$ the root system of $(G,T)$ and 
$\Phi^+$ the positive roots determined by $B$.

Let $X$ be a Fano manifold, spherical under the action of $G$, which means that 
$B$ acts on $X$ with an open and dense orbit.
The \emph{moment polytope} $\Delta^+\subset \mathfrak{X}(T)\otimes \mathbb{R}$ of $X$ 
with respect to $B$ is a polytope encoding the structure of representation of $G$ 
on the spaces of sections of tensor powers of the anticanonical line bundle.
Alternatively, from a symplectic point of view, it can be characterized as the 
Kirwan moment polytope of $(X,\omega)$ with respect to the action of a maximal 
compact subgroup $K$ of $G$, where $\omega$ is a $K$-invariant Kähler 
form in $c_1(X)$ (see \cite{Bri87}).  
The moment polytope $\Delta^+$ determines a sub-root system $\Phi_L$ of $\Phi$, 
composed of those roots that are orthogonal to the affine 
span of $\Delta^+$ with respect to the \emph{Killing form} $\kappa$. 
Let $\Phi_{Q^u}$ be the set $\Phi^+ \setminus \Phi_L$, and 
$2\rho_Q$ be the sum of the elements of $\Phi_{Q^u}$.

A spherical variety $X$ also has an open and dense orbit $O$ under the action of $G$.
The valuation cone of $X$ depends only on this open orbit $O$. 
Let $\mathcal{M}\subset \mathfrak{X}(T)$ be the set of characters of 
$B$-semi-invariant functions in the function field $\mathbb{C}(O)$ of $O$, and 
let $\mathcal{N}$ be its $\mathbb{Z}$-dual. 
The restriction of a $\mathbb{Q}$-valued valuation on $\mathbb{C}(O)$ to the 
$B$-semi-invariant functions defines an element of $\mathcal{N}\otimes \mathbb{Q}$.
The \emph{valuation cone} $\mathcal{V}$ with respect to $B$ is defined as 
the set of those elements of $\mathcal{N}\otimes \mathbb{Q}$ induced by 
$G$-invariant valuations on $\mathbb{C}(O)$.

Remark that the vector space $\mathcal{N}\otimes \mathbb{Q}$ is a quotient of 
the vector space $\mathfrak{Y}(T)\otimes \mathbb{Q}$, where $\mathfrak{Y}(T)$ is 
the group of algebraic one parameter subgroups of $T$. Denote by 
$\pi : \mathfrak{Y}(T)\otimes \mathbb{Q} \longrightarrow \mathcal{N} \otimes \mathbb{Q}$
the quotient map, so that $\pi^{-1}(\mathcal{V}) \subset \mathfrak{Y}(T)\otimes \mathbb{Q}$.
Let $\Xi \subset \mathfrak{X}(T)\otimes \mathbb{R}$ be the dual cone to the closure of 
the inverse image by $\pi$ of the opposite of the valuation cone  
$\pi^{-1}(-\mathcal{V})$ in $\mathfrak{Y}(T)\otimes \mathbb{R}$ (dual with respect to the 
extension of the natural 
pairing $\left<\cdot,\cdot\right>:\mathfrak{X}(T) \times \mathfrak{Y}(T) \longrightarrow \mathbb{Z}$).

The group of $G$-equivariant automorphisms $\mathrm{Aut}_G(X)$ of 
the spherical manifold $X$ is diagonalizable. 
The real vector space generated by the linear part of $\mathcal{V}$ 
is in fact isomorphic to $\mathfrak{Y}(\mathrm{Aut}_G(X))\otimes \mathbb{R}$. 

Given $\zeta$ in $\mathfrak{Y}(\mathrm{Aut}_G(X))\otimes \mathbb{R}$ identified with an element of 
$\mathcal{N}\otimes \mathbb{R}$, and a choice $\tilde{\zeta}\in \pi^{-1}(\zeta)$ of lift of $\zeta$, 
we denote by $\mathrm{bar}_{DH,\tilde{\zeta}}(\Delta^+)$ the barycenter of the polytope 
$\Delta^+$ with respect to the measure with density 
$p\mapsto e^{2\left<p-2\rho_Q,\tilde{\zeta}\right>}
\prod_{\alpha\in \Phi_{Q^u}} \kappa (\alpha,p)$ 
with respect to 
the Lebesgue measure $dp$ on $\mathfrak{X}(T) \otimes \mathbb{R}$.
Our main result is the following.

\begin{thma}
Let $X$ be a Fano spherical manifold. The following are equivalent.
\begin{enumerate}
\item There exists a Kähler-Ricci soliton on $X$ with associated holomorphic 
vector field $\zeta$.
\item The barycenter $\mathrm{bar}_{DH,\tilde{\zeta}}(\Delta^+)$ is in the 
relative interior of the cone $2\rho_Q + \Xi$.
\item The manifold $X$ is modified K-stable with respect to equivariant 
special test configurations.
\item The manifold $X$ is modified K-stable.
\end{enumerate} 
\end{thma}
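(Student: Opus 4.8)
The plan is to separate the analytic input from the combinatorial computation, the latter carrying the genuinely new content. The implication (4)$\Rightarrow$(3) is immediate, since equivariant special test configurations form a subclass of all test configurations. For (3)$\Rightarrow$(1) I would invoke the equivariant Yau–Tian–Donaldson correspondence for Kähler–Ricci solitons of Datar and Székelyhidi, combined with the reduction of K-stability to \emph{special} test configurations in the style of Li–Xu, adapted to the modified (soliton) setting: modified K-stability against equivariant special degenerations upgrades to modified K-stability against all equivariant test configurations, from which Datar–Székelyhidi produces the soliton. The necessity (1)$\Rightarrow$(4) is the assertion that a manifold carrying a Kähler–Ricci soliton is modified K-polystable, which follows from the convexity and positivity of the modified Ding/Mabuchi functional along geodesic rays. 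These three steps close the cycle $(1)\Rightarrow(4)\Rightarrow(3)\Rightarrow(1)$ among the analytic conditions, so the heart of the theorem is the purely combinatorial equivalence (2)$\Leftrightarrow$(3).

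To prove (2)$\Leftrightarrow$(3), I would first classify the $G\times\mathbb{C}^*$-equivariant special test configurations of $X$. Using the Luna–Vust description of degenerations of spherical varieties, such configurations with normal $\mathbb{Q}$-Fano central fibre should be parametrized by the integral elements of the closure of the lifted valuation cone $\pi^{-1}(\mathcal{V}_-)\subset\mathfrak{Y}(T)\otimes\mathbb{Q}$: an element $\xi$ induces a $G$-invariant valuation on $\mathbb{C}(O)$, hence a degeneration of the open orbit, and the anticanonical polarization extends over the total space. The central fibre is again spherical with the same open-orbit combinatorics, and its moment polytope is still $\Delta^+$, so the whole family is controlled by the single cone $\pi^{-1}(\mathcal{V}_-)$ together with the fixed polytope $\Delta^+$.

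Next I would compute the modified Donaldson–Futaki invariant of the configuration attached to $\xi$. The key point is that the $B$-module structure of $H^0(X,-kK_X)$ is governed by the lattice points of $k\Delta^+$ with multiplicity $\prod_{\alpha\in\Phi_P^+}\kappa(\alpha,\cdot)$, while the induced $\mathbb{C}^*$-weight is the pairing $\langle\,\cdot\,,\xi\rangle$. Expanding the soliton-weighted total weight and dimension asymptotically, the limit of the normalized weight measure is exactly the Duistermaat–Heckman measure on $\Delta^+$ with density $p\mapsto e^{\langle 4\rho-2p,\tilde{\zeta}\rangle}\prod_{\alpha\in\Phi_P^+}\kappa(\alpha,p)$. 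After accounting for the contribution of the canonical class — which produces a shift by $2\rho_P$ coming from the $\rho_P$-term in the curvature of $-K_X$ — the modified Donaldson–Futaki invariant becomes, up to a fixed positive factor, the linear functional
\begin{equation*}
\xi\longmapsto \left\langle \mathrm{bar}_{DH,\tilde{\zeta}}(\Delta^+)-2\rho_P,\ \xi\right\rangle.
\end{equation*}
Modified K-stability against these configurations then reads: this functional is $\geq 0$ on $\overline{\pi^{-1}(\mathcal{V}_-)}$ and $>0$ off its linear part. By the very definition of $\Xi$ as the dual cone of $\overline{\pi^{-1}(\mathcal{V}_-)}$, this is equivalent to $\mathrm{bar}_{DH,\tilde{\zeta}}(\Delta^+)\in\mathrm{relint}(2\rho_P+\Xi)$, which is condition (2).

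The main obstacle will be the two technical inputs feeding this computation. First, the classification requires showing that every equivariant special test configuration arises from the valuation cone and that the induced central fibre is genuinely normal and $\mathbb{Q}$-Fano, which rests on a careful application of Luna–Vust theory to the anticanonically polarized setting. Second, and most delicately, the asymptotic identification of the modified Donaldson–Futaki invariant with the barycenter functional requires controlling the soliton weight $e^{\langle 4\rho-2p,\tilde{\zeta}\rangle}$ uniformly and pinning down the exact $2\rho_P$ shift; the bookkeeping of the linear part of the valuation cone — the automorphism directions $\mathfrak{Y}(\mathrm{Aut}_G(X))\otimes\mathbb{R}$ along which the invariant must vanish, corresponding to product configurations — is precisely where polystability rather than mere stability enters, and is the subtle point to handle when translating the relative-interior condition into strict positivity.
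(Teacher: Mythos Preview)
Your logical skeleton --- delegating $(1)\Leftrightarrow(3)\Leftrightarrow(4)$ to Datar--Sz\'ekelyhidi and Li--Xu, and isolating $(2)\Leftrightarrow(3)$ as the spherical content --- matches the paper exactly, as does your description of the classification of equivariant special test configurations by rational points of $\pi^{-1}(\mathcal{V}_-)$ and the identification of product configurations with its linear part.

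The genuine divergence is in how you extract the value of the modified Donaldson--Futaki invariant. You propose to compute it directly on the central fibre by algebraic asymptotics: read off the $\mathbb{C}^*$-weights on $H^0(X_0,L_0^k)$ via the lattice points of $k\Delta^+$ weighted by the Weyl-dimension density, and pass to the limit to recover the Duistermaat--Heckman barycenter. The paper takes a different route: it first observes (via a short flat-family argument, Proposition~\ref{contrep} and Corollary~\ref{cor_const_fut}) that the modified Futaki invariant is constant along any further $G$-equivariant degeneration, then degenerates $X_0$ once more to a \emph{horospherical} variety (always possible, since the interior of the valuation cone is non-empty), and finally computes the Futaki invariant there \emph{analytically} --- writing down the curvature form explicitly in terms of a convex potential $u$ on $\mathfrak{a}_1$ (Theorem~\ref{thm_curv_form}), identifying the Hamiltonian as $-\langle\nabla u,\tilde{\xi}\rangle$, and performing the change of variables $p=du+4\rho_P$ to land on the integral over $\Delta^+$.

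Your approach is viable and arguably more streamlined; its cost is that you must justify directly that the $\mathbb{C}^*$-weight on each $V_\lambda\subset H^0(X_0,L_0^k)$ is $\langle\lambda,\tilde{\xi}\rangle$ for a possibly non-horospherical $X_0$, and that the $2\rho_P$ shift emerges correctly from the anticanonical linearization --- both of which you flag but do not resolve. The paper's detour through the horospherical degeneration buys exactly these two points for free: on a horospherical variety the $\mathbb{C}^*$-action is literally right multiplication by a torus, so the weight is transparent, and the character of $K^{-1}_{G/H}$ is computed once and for all as $-2\rho_P$. The price the paper pays is the analytic machinery of Section~\ref{sec_horo_space} (Iwasawa decomposition, explicit curvature computation), which your purely algebraic route would avoid.
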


The equivalence between (1) and (4) holds for any Fano manifold. In the setting of 
Kähler-Einstein metrics, it is the consequence on one hand of the work of Chen, 
Donaldson and Sun recalled earlier, and on the other hand of the work of Berman 
\cite{Ber16}. 
In the more general setting of Kähler-Ricci solitons, Berman and Witt-Nystrom 
\cite{BWN} showed that (4) is a necessary condition for (1). 
Datar and Székelyhidi \cite{DS15} showed that (3) implies (1) for any 
Fano manifold equipped with an action of a complex reductive group, and 
(4) clearly implies (3). 
What we prove in this article is the equivalence between (2) and (3) in the 
case of a spherical Fano manifold.
Furthermore we prove that the equivalence between (2) and (3) holds for 
singular $\mathbb{Q}$-Fano spherical varieties. 

The intuition for our main result came from our previous work on group compactifications, 
which did not involve K-stability.
The proof of a Kähler-Einstein criterion for smooth and Fano group compactifications
in \cite{DelTh,DelKE} can be adapted to provide another proof of the criterion for 
Kähler-Ricci solitons on the same manifolds. 
Similarly, Wang-Zhu type methods (as used in \cite{WZ04} and \cite{PS10}), together 
with some results proved for horospherical manifolds in the present paper, could 
be used to obtain the Kähler-Ricci soliton criterion for these manifolds. 
One advantage of this other approach is that the value of the greatest Ricci lower 
bound can be explicitly computed. 
Alternatively, for horospherical varieties at least, this quantity could be computed 
using \emph{twisted} modified K-stability (see \cite{DS15}). 

The computation of the K-stability of a manifold requires two ingredients.
The first one is a description of all test configurations (rather, using \cite{DS15}, 
all special equivariant test configurations), and the second one is a way to 
compute the Donaldson-Futaki invariant for all of these test configurations. 

The description of special equivariant test configurations
is obtained using the general theory of spherical varieties. 
Generalizing the fan of a toric variety, the \emph{colored fan} of a spherical variety
consists essentially of a fan subdividing the valuation cone together with 
additional data called \emph{colors}. 
The total space of a (normal) test configuration is still a spherical variety, 
and can thus be described by its colored fan. In the case of a special test 
configuration, the central fiber itself is a spherical variety and the 
action of $\mathbb{C}^*$ on the central fiber may be deduced from the 
colored fan of the test configuration. More precisely, we prove the following.

\begin{thmb}
Let $X$ be a spherical variety under the action of a reductive algebraic group $G$. 
Let $\mathcal{X}$ be an equivariant test configuration 
for $X$, with irreducible (scheme theoretic) central fiber $X_0$. Then 
\begin{itemize}
\item $X_0$ is spherical under the action of $G$,
\item to $\mathcal{X}$ is associated an element $\xi$ of the valuation cone such that 
for an appropriate choice of $H_0$ such that the open orbit of $G$ in $X$ is 
identified with $G/H_0$, 
the action of $e^{\tau}\in \mathbb{C}^*$ induced by the test configuration is given by:
\[
e^{\tau}\cdot gH_0=g\exp(-\tau \tilde{\xi})H_0
\]
for any lift $\tilde{\xi} \in \mathfrak{Y}(T)\otimes\mathbb{Q}$ considered as an element of 
the Lie algebra of $T$.
\end{itemize}
Furthermore, for any $\xi$ in the valuation cone, we may associate to $\xi$ an 
equivariant test configuration with irreducible central fiber,  
and there exists an integral multiple $m\xi$, $m\in \mathbb{N}$ such that the 
test configuration associated to $m\xi$ is \emph{special}, that is, $X_0$ is a normal 
(reduced) variety.
Finally, a special equivariant test configuration has central fiber 
$X_0$ isomorphic to $X$ if and only if the associated $\xi$ is in the 
linear part of the valuation cone.
\end{thmb}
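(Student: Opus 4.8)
The plan is to realize the total space of the test configuration as a spherical variety for the enlarged group $G\times\mathbb{C}^*$ and to read off every piece of data from its colored fan. First I would note that $\mathcal{X}$ carries an action of $G\times\mathbb{C}^*$, with $G$ acting fiberwise and $\mathbb{C}^*$ covering the scaling action on the base $\mathbb{A}^1$. Over the punctured base the family is equivariantly trivial, so its restriction is $X\times\mathbb{C}^*$; since the product of the open $B$-orbit of $X$ with $\mathbb{C}^*$ is an open dense orbit of the Borel subgroup $B\times\mathbb{C}^*$ of $G\times\mathbb{C}^*$, the total space $\mathcal{X}$ is spherical under $G\times\mathbb{C}^*$. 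Luna--Vust theory then attaches to $\mathcal{X}$ a colored fan in $(\mathcal{N}_-\oplus\mathbb{Z})\otimes\mathbb{Q}$, the extra factor $\mathbb{Z}$ recording the $\mathbb{C}^*$-weight. The structure morphism $\mathcal{X}\to\mathbb{A}^1$ is $G$-invariant and $\mathbb{C}^*$-semi-invariant of weight one, hence is the semi-invariant attached to the weight $(0,1)$; this constrains the fan and identifies the generic fiber, after projection to $\mathcal{N}_-\otimes\mathbb{Q}$, with the colored fan of $X$.

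Next I would analyze the central fiber. The divisor $\{s=0\}$ has vanishing order along a $G$-stable prime divisor $D_\rho$ equal to the pairing of $(0,1)$ with the primitive generator of the corresponding ray $\rho$, so $X_0$ is supported on the rays whose $\mathbb{Z}$-coordinate is positive. Because the configuration is special, $X_0$ is normal and in particular reduced and irreducible, which forces a single such ray; its projection to $\mathcal{N}_-\otimes\mathbb{Q}$ is a well-defined element $\xi$. The residual $G$-action makes $X_0$ a normal $G$-variety whose open $B$-orbit is the limit of the open orbit of $X$, so $X_0$ is spherical under $G$, yielding the first bullet. That the prime divisor $X_0\subset\mathcal{X}$ induces a $G$-invariant valuation on $\mathbb{C}(O)$ --- the content of the degeneration being $G$-equivariant with $G$-stable central fiber --- places $\xi$ in the valuation cone $\mathcal{V}_-$.

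To pin down the explicit action I would trace the $\mathbb{C}^*$-action through the open orbit. Over the punctured base this open orbit is $G/H_0\times\mathbb{C}^*$, and the requirement that $\mathbb{C}^*$ scale the base coordinate with weight one while inducing on the associated graded the grading determined by $\xi$ forces the action $s\cdot(gH_0,t)=(g\exp(-\log s\,\tilde\xi)H_0,\,st)$ for any lift $\tilde\xi\in\pi^{-1}(\xi)$; restricting to the slice $t=1$ and setting $s=e^{\tau}$ gives the stated formula $e^{\tau}\cdot gH_0=g\exp(-\tau\tilde\xi)H_0$. Independence of the lift is guaranteed by the appropriate choice of $H_0$, for which $\ker\pi$ lies in the Lie algebra of $T\cap H_0$, so that two lifts induce the same right-translation action on $G/H_0$. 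For the converse I would run the construction backwards: given $\xi\in\mathcal{V}_-$, assemble a colored fan in $(\mathcal{N}_-\oplus\mathbb{Z})\otimes\mathbb{Q}$ by the Rees-type construction in the direction of $\xi$, and verify that the associated spherical variety is flat over $\mathbb{A}^1$, with generic fiber $X$ and normal central fiber, hence a special test configuration realizing $\xi$.

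The isomorphism criterion then follows from the identification, recalled in the excerpt, of the real vector space generated by the linear part of $\mathcal{V}_-$ with $\mathfrak{Y}(\mathrm{Aut}_G(X))\otimes\mathbb{R}$. If $\xi$ lies in this linear part, the associated one-parameter subgroup acts on $X$ by $G$-equivariant automorphisms for all values of the parameter, so the family is a product and $X_0\cong X$; conversely, a trivial degeneration forces the valuation attached to $\xi$ to be invertible, hence its class to lie in the linear part. The step I expect to be the main obstacle is the precise matching, in the special case, between the combinatorial central fiber produced by Luna--Vust theory and the concrete $\mathbb{C}^*$-action on $G/H_0$: one must show both that normality of $X_0$ collapses the relevant part of the fan to a single ray direction $\xi$ and that this $\xi$ is exactly the infinitesimal generator of the right-translation action, keeping careful track of which $B^-$-semi-invariants survive in the associated graded and of the compatible choice of $H_0$.
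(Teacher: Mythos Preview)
Your approach is essentially the paper's: realize $\mathcal{X}$ as a $G\times\mathbb{C}^*$-spherical embedding of $G/H\times\mathbb{C}^*$, extract the ray generated by some $(\xi,m)$ with $m>0$ from the colored fan (the elementary embedding given by the open $G\times\mathbb{C}^*$-orbits in $\mathcal{X}$ and $X_0$), construct the converse test configuration by writing down the fan explicitly, and deduce the isomorphism criterion from the identification of the linear part of $\mathcal{V}_-$ with $\mathfrak{Y}(\mathrm{Aut}_G(X))\otimes\mathbb{Q}$. The obstacle you flag is resolved in the paper via Brion--Pauer's theory of adapted Levi subgroups \cite{BP87}: after adjusting the base point so that an adapted Levi $L\supset T$ exists, any one-parameter subgroup $(\lambda,m)$ adapted to the ray has $\tilde{x}_0:=\lim_{z\to0}(\lambda(z),z^m)\cdot\tilde{x}$ in the open $B$-orbit of $X_0$, so $(\lambda(z),z^m)$ lies in the stabilizer of $\tilde{x}_0$ and the right-translation formula for the $\mathbb{C}^*$-action on $G/H_0$ drops out immediately.
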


Degenerations of spherical varieties, and moduli questions, have been studied 
by Alexeev and Brion \cite{AB06} (see also \cite{AB04SRVI,AB04SRVII} for the case of
reductive varieties). 
We adopt here a different approach to be better able to keep track of the actions 
of $\mathbb{C}^*$ on the degenerations. 
Remark that the central fiber of a normal and equivariant test configuration which is 
not special would be a stable spherical variety in the sense of \cite{AB06}. 
We do not study these test configurations here. 
Our description of the action of $\mathbb{C}^*$ on the central fiber of a special 
equivariant test configuration relies heavily on the work of Brion and Pauer \cite{BP87}
on elementary embeddings of spherical homogeneous spaces.  

The Donaldson-Futaki invariant of a test configuration depends only on the 
central fiber and the induced action of $\mathbb{C}^*$ on the central fiber.
Namely, it reduces to the Futaki invariant of the central fiber evaluated 
at the holomorphic vector field generating the action.
The basic idea behind our computation of these numerical invariants is that 
we may degenerate the central fiber even more in order to acquire more symmetries, 
then compute the Futaki invariant on the corresponding degeneration.

For spherical varieties, this idea leads to consider only the Futaki invariants
of horospherical varieties. Indeed, there always exist a test configuration with 
horospherical central fiber. The existence of a horospherical degeneration 
for spherical varieties is a classical result \cite{Bri86,Pop87}. 
In the notations of Theorem~B, 
the test configuration $\mathcal{X}$ has a horospherical central fiber if and 
only if $\xi$ is in the interior of the valuation cone. 
Horospherical varieties are the simplest among spherical varieties, but still 
form a large class containing toric varieties and homogeneous toric bundles. 
They should be considered as the "most symmetric" spherical varieties. 
In particular, even if it is possible to degenerate any spherical 
variety to a toric variety \cite{AB04Toric}, 
it is at the expense of some symmetries, as
it can be made equivariantly only with respect to a maximal torus.

Our computation of the modified Futaki invariant on $\mathbb{Q}$-Fano 
horospherical varieties gives the following statement, where we keep the 
notations introduced earlier.
\begin{thmc}
Let $X$ be a $\mathbb{Q}$-Fano horospherical variety, with moment polytope 
$\Delta^+$. Let $\zeta, \xi$ be two elements of 
$\mathfrak{Y}(\mathrm{Aut}_G(X))\otimes \mathbb{R}$ and let $\tilde{\zeta}$ 
and $\tilde{\xi}$ be choices of lifts in $\mathfrak{Y}(T)\otimes \mathbb{R}$.
Then the modified (with respect to $\zeta$) Futaki invariant of $X$ evaluated 
at $\xi$ is 
\[
\mathrm{Fut}_{X,\zeta}(\xi)=
C \left< 2\rho_Q-\mathrm{bar}_{DH,\tilde{\zeta}}(\Delta^+), \tilde{\xi} \right>
\]  
where $C$ is a positive constant independent of $\xi$.
\end{thmc}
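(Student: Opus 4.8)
The plan is to compute the modified Futaki invariant algebraically, from the asymptotics of the weights of the torus action on the anticanonical sections, and then to recognize the resulting quantity as a weighted barycenter by pushing everything forward to the moment polytope. Since $X$ is horospherical, $\mathrm{Aut}_G(X)$ contains a torus acting $G$-equivariantly, and both $\xi$ and the modifying field $\zeta$ generate one-parameter subgroups of it. Using the metric independence of the Futaki invariant together with the fact, recalled in the introduction, that the Donaldson--Futaki invariant of a special test configuration reduces to the modified Futaki invariant of its central fiber, it suffices to read off $\mathrm{Fut}_{X,\zeta}(\xi)$ from the equivariant data carried by $H^0(X,-kK_X)$ as $k\to\infty$.

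First I would exploit the horospherical structure to decompose the pluri-anticanonical sections. Writing the open orbit as $G/H$ with $P=N_G(H)$ parabolic, the variety fibers over the flag variety $G/P$, and sphericity forces $H^0(X,-kK_X)$ to split as a multiplicity-free sum $\bigoplus_\lambda V_\lambda$ of irreducible $G$-modules, whose highest weights $\lambda$ are (suitably interpreted) the lattice points of the dilated polytope $k\Delta^+$. A $G$-equivariant automorphism acts on each isotypic piece $V_\lambda$ by a scalar, so $\tilde\xi$ and $\tilde\zeta$ act with weights $\langle\lambda,\tilde\xi\rangle$ and $\langle\lambda,\tilde\zeta\rangle$, while the anticanonical linearization contributes the shift measured against $2\rho_P=\sum_{\alpha\in\Phi_P^+}\alpha$. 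The dimension $\dim V_\lambda$ is given by Weyl's formula; because $\Delta^+$ is orthogonal to $\Phi_L$, the factors indexed by $\Phi_L$ are asymptotically constant, so only the roots of $\Phi_P^+$ survive in the leading term, giving $\dim V_{kp}\sim c\,k^{|\Phi_P^+|}\prod_{\alpha\in\Phi_P^+}\kappa(\alpha,p)$ up to positive constants absorbed into $C$.

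Next I would assemble the modified Futaki invariant as the limit of a weighted average. After the standard rescaling $\lambda=kp$, the $\zeta$-modification enters as an exponential weight of the form $e^{\langle\cdot,\tilde\zeta\rangle}$ on the measure, and the sums over lattice points become Riemann sums converging to integrals over $\Delta^+$ against the Duistermaat--Heckman measure of density $\prod_{\alpha\in\Phi_P^+}\kappa(\alpha,p)$ weighted by that exponential. The invariant is then read off as the first-order defect: a weighted average of $\langle p,\tilde\xi\rangle$ against this measure minus the reference value $\langle 2\rho_P,\tilde\xi\rangle$, which is exactly $C\langle\mathrm{bar}_{DH,\tilde\zeta}(\Delta^+)-2\rho_P,\tilde\xi\rangle$. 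Linearity in $\xi$ is manifest; positivity of $C$ follows since the density is strictly positive on the relative interior of $\Delta^+$; and independence of the chosen lifts $\tilde\xi,\tilde\zeta$ follows from the intrinsic definition of $\mathrm{Fut}_{X,\zeta}$.

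The main obstacle is the careful bookkeeping of the normalizations, and this is precisely where the stated form of the density gets pinned down. Two points demand attention: the shift by $2\rho_P$, which must be traced through the linearization of the anticanonical polarization (equivalently, through the $\rho$-shift in Weyl's dimension formula entering at the subleading order that governs the Futaki invariant); and the exact exponential weight $e^{\langle 4\rho-2p,\tilde\zeta\rangle}$, whose factor $-2$ and constant $4\rho$ are dictated by the Tian--Zhu normalization of the soliton potential. The additive constant $4\rho$ cancels between numerator and denominator of the barycenter and is therefore immaterial, but the coefficient and sign on $p$, together with the $2\rho_P$ shift, must be matched exactly. Checking these identifications against the toric specialization, where $\Phi_P^+=\emptyset$ and the formula collapses to the classical barycenter criterion, provides the essential consistency test.
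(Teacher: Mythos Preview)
Your proposal is correct in outline but takes a genuinely different route from the paper. The paper proves Theorem~C \emph{analytically}: it builds up, in Section~2, an explicit expression for the curvature form of a $K$-invariant metric on the horospherical homogeneous space $G/H$ in terms of a convex potential $u:\mathfrak{a}_1\to\mathbb{R}$ (Theorem~\ref{thm_curv_form}), computes the Hamiltonian $\theta_\xi$ as $-\{\nabla u,\tilde\xi\}$, and then evaluates $-\int\theta_\xi e^{\theta_\zeta}\omega^n$ by fiber-integrating over $G/P$, reducing to an integral over $\mathfrak{a}_1$, and finally performing the Legendre-type change of variables $p=d_xu+4\rho_P$ to land on $\Delta^+$ with the Duistermaat--Heckman density. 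The asymptotic control needed for this change of variables comes from Proposition~\ref{prop_asymptotic}, proved via the decoloration of $X$ and comparison with the Batyrev--Tschinkel metric.

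Your approach instead uses the \emph{algebraic} definition of the modified Futaki invariant (Proposition~\ref{quantfut}, due to Berman--Witt~Nystr\"om), the multiplicity-free decomposition $H^0(X,-kK_X)=\bigoplus_{\lambda\in k\Delta^+}V_\lambda$, Schur's lemma for the scalar action of $\mathrm{Aut}_G(X)$, and the Weyl dimension formula. Your observation that the $\Phi_L$-factors in Weyl's formula are constant (since $\kappa(\alpha,p)=0$ for $\alpha\in\Phi_L$ and $p\in\Delta^+$) is the correct replacement for the paper's curvature computation. This route is more self-contained for the Futaki invariant alone and bypasses all of Section~2. The paper's analytic machinery, however, yields more: explicit curvature formulas and potential asymptotics that are reusable for related questions (greatest Ricci lower bounds, twisted K-stability), which is why the author chose that path. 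Your normalization caveats are well-placed; the pairing between the linearization shift and $2\rho_P$, and the factor of $2$ in the exponential weight, are exactly where a full write-up of your approach would need care.
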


This statement was first obtained by Mabuchi \cite{Mab87} for smooth toric manifolds. 
For smooth homogeneous toric bundles, this statement was obtained by Podesta and 
Spiro in \cite{PS10}. 
Let us also mention here the work of Alexeev and Katzarkov on K-stability of 
group compactifications \cite{AK05}. 
Our computation is based on an expression for the curvature form of a 
positive hermitian metric on a polarized horospherical homogeneous space which is 
invariant under the action of a maximal compact subgroup of $G$, in terms 
of a convex potential associated to the metric, and a description of the asymptotic 
behavior of this convex function.

We will use two definitions of the Futaki invariant. First, we will use 
an analytic definition to obtain its precise value. The original definition
of the Futaki invariant by Futaki \cite{Fut83} was generalized by Ding and 
Tian to normal $\mathbb{Q}$-Fano varieties \cite{DT92}.
The modified Futaki invariant, the analogue for Kähler-Ricci solitons, was 
introduced by Tian and Zhu \cite{TZ02}, 
and generalized to singular varieties by Berman and Witt Nystrom \cite{BWN}.
To show that we can compute the Futaki invariant on a degeneration though, 
we will use the algebraic definition of the Futaki invariant. 
This was first proposed by Donaldson \cite{Don02}.
He showed that the two definitions coincide when the variety is non singular. 
In fact, as remarked by Li and Xu \cite[Section 7, Remark 1]{LX14}, 
his proof extends to normal varieties. 
It is also Berman and Witt Nystrom who generalized this algebraic definition 
to the modified Futaki invariant (see also \cite{WZZ16} and \cite{Xio14} for related 
recent work on Futaki invariants). 

Let us end this introduction with a question related with K-stability and 
the method used here to compute Futaki invariants. 
Consider the partial order on $G$-spherical $\mathbb{Q}$-Fano varieties, 
given by $X_0\preceq X$ if $X_0$ is the central fiber of a special 
equivariant test configuration for $X$. 
Given a $G$-spherical $\mathbb{Q}$-Fano variety $X$, it is interesting to 
consider the set of all $G$-spherical $\mathbb{Q}$-Fano varieties
smaller than $X$ with respect to this order. 
It follows from \cite{AB04SRVII,AK05} that this poset is very well 
understood for reductive varieties. In 
particular there is a single minimum which is the central fiber of any special test 
configuration with horospherical central fiber, and the other elements are 
in bijection with the walls of the Weyl chamber, up to a finite number of isomorphisms
induced by exterior automorphisms of the root system. 
It would be interesting to extend this precise description to all spherical 
$\mathbb{Q}$-Fano varieties. The fact that the minima are horospherical 
varieties is stated in the present article. 
More generally, one may consider similar posets for more general 
$\mathbb{Q}$-Fano varieties by considering the partial order 
$X_0\preceq X$ if $X_0$ is the central fiber of a special 
test configuration for $X$, equivariant with respect to a Levi subgroup 
of the group of automorphisms of $X$.
Many questions about these posets could clarify  the interpretation of 
K-stability.  As a precise example, let us ask the following question:
\begin{question}
What are the minima of these posets?
In other words, if $X$ is a $\mathbb{Q}$-Fano variety, and $G$ is a 
Levi subgroup of $\mathrm{Aut}^0(X)$, under which conditions on $X$ 
are there no special $G$-equivariant test configurations for $X$ with central 
fiber not isomorphic to $X$?
\end{question}  
The only examples known to the author are horospherical varieties. 
One interest of knowing these minima is that, thanks to the argument 
we use in Section~\ref{sec_criterion}, they are the only varieties on which we 
really need to compute Futaki invariants.
Of course the poset alone is not enough to recover 
all the information about K-stability, as one should keep track of the different 
ways the elements of the poset appear as central fibers of test configurations, more 
precisely of the different holomorphic vector fields induced by these 
test configurations.

The ideas and the methods of the present article should have interesting applications 
to several related problems. The most obvious one is K-stability of spherical 
varieties for polarizations not given by the anticanonical line bundle. 
Another direction is in Sasakian 
geometry, where an analogue of the Yau-Tian-Donaldson conjecture was obtained
by Collins and Székelyhidi \cite{CS}. Finally, it would be interesting to combine the ideas 
from our work with the work of Ilten and Süss \cite{IS} to study varieties with a spherical 
action of complexity one. In particular, Langlois and Terpereau \cite{LT16,LT} have started a 
study of horospherical varieties of complexity one that should lead them to a 
criterion for being Fano, which would be a starting point.

The structure of the article is as follows. After introducing some notations 
and conventions in Section~\ref{sec_not}, we begin in Section~\ref{sec_horo_space}
by introducing horospherical homogeneous spaces and develop methods to deal 
with metrics on these spaces. Namely, given a metric on a linearized line bundle,
invariant under the action of a maximal compact subgroup, 
on a horospherical homogeneous space, we associate a real function defined 
on a vector space with it and obtain an expression of its curvature form in 
term of this function. 
In Section~\ref{sec_test_config} we recall the necessary notions from the 
theory of spherical varieties to determine special equivariant test 
configurations of Fano spherical varieties, then achieve this goal. 
We explain in particular why any $\mathbb{Q}$-Fano spherical variety 
has a special equivariant test configuration with a $\mathbb{Q}$-Fano 
horospherical central fiber. 
The valuation cone and the moment polytope are also introduced here.
Section~\ref{sec_fut_horo} is devoted to the computation of Futaki 
invariants of $\mathbb{Q}$-Fano horospherical varieties. It relies on 
Section~\ref{sec_horo_space} and the description of the asymptotic 
behavior of the function associated to the restriction of a positive 
hermitian metric to the open orbit. This description involves the moment polytope 
and generalizes the well-known case of polarized toric manifolds. 
Finally, we prove our main theorem in Section~\ref{sec_criterion}, 
after proving that the Futaki invariant for a given $\mathbb{C}^*$ 
action may be computed on a $\mathbb{C}^*$-equivariant 
degeneration of the variety.
The end of the section contains many examples of new situations 
in which the theorem applies, including new examples of Kähler-Einstein 
metrics, of Kähler-Ricci solitons, or of K-unstable manifolds. 
This illustrates also how the combinatorial criterion simplifies 
on specific subclasses of spherical varieties, such as symmetric 
varieties.

The reader interested in learning more about spherical varieties may consult 
\cite{Per14, Tim11, Bria}.  

\section{Notations}
\label{sec_not}

We introduce in this section some notions and notations on groups and Lie algebras
that will be used throughout the text.

Let $G$ be a complex linear algebraic group. 
We will denote its Lie algebra by the corresponding fraktur lower case letter $\mathfrak{g}$, 
and the exponential map by $\exp$.
An (algebraic) \emph{character} of $G$ is an algebraic group morphism 
$G \longrightarrow \mathbb{C}^*$.
We denote the group of algebraic characters of $G$ by $\mathfrak{X}(G)$. 
An (algebraic) \emph{one parameter subgroup} of $G$ is an algebraic group morphism 
$\mathbb{C}^* \longrightarrow G$. 
We denote the set of algebraic one parameter subgroups of $G$ by $\mathfrak{Y}(G)$.
We denote by $G^u$ the \emph{unipotent radical} of $G$,  
and by $[G,G]$ its derived subgroup.
If $H$ is a subgroup of $G$, let  
$N_G(H) = \{g\in G ; gHg^{-1}=H\}$ be the \emph{normalizer} of $H$ in $G$.

In this article, $G$ will denote a connected complex linear reductive group, 
$K$ will denote a maximal compact subgroup of $G$, and $\theta$ the \emph{Cartan 
involution} of $G$ such that $K = G^{\theta}$ is the fixed point set of $\theta$.
Let $T$ be a maximal torus of $G$, stable under $\theta$, let $B$ be a Borel 
subgroup of $G$ containing $T$, and let $B^-$ be the unique Borel subgroup of $G$ 
such that $B\cap B^-=T$, called the \emph{opposite Borel subgroup} with respect to $T$. 
Let $\Phi \subset \mathfrak{X}(T)$ be the root system of $(G,T)$ 
and let $\Phi^+\subset \Phi$ be the positive roots determined by $B$. 

A parabolic subgroup $P$ of $G$ containing $B$ admits a unique Levi subgroup $L$ 
containing $T$. The parabolic subgroup of $G$ \emph{opposite} to $P$ with respect to $L$ 
is defined as the unique parabolic subgroup $Q$ of $G$ such that $P\cap Q= L$ 
and $L$ is also a Levi subgroup
of $Q$. Let $\Phi_P$ denote the set of roots of $P$ with respect to $T$. 
We denote by $\Phi_{P^u}$ the set of roots of the unipotent radical $P^u$ of $P$. 
Alternatively, these are the roots of $G$ that are not roots of $Q$, or the roots of 
$P$ that are not roots of $L$.
Let  
\[
2\rho_P = \sum_{\alpha \in \Phi_{P^u}} \alpha.
\]

Consider the root decomposition of $\mathfrak{g}$:
\[
\mathfrak{g} = \mathfrak{t} \oplus \bigoplus_{\alpha \in \Phi} \mathfrak{g}_{\alpha}
\]
where 
$\mathfrak{g}_{\alpha} = 
\{x\in \mathfrak{g} ; \mathrm{ad}(t)(x)=\alpha(t)x ~ \forall t\in \mathfrak{t} \}$
is the root space for $\alpha$.
The Lie algebra of $P$ is 
\[
\mathfrak{p} = \mathfrak{t} \oplus \bigoplus_{\alpha \in \Phi_P} \mathfrak{g}_{\alpha}.
\]
The Cartan involution $\theta$ descends to an involution of $\mathfrak{g}$, 
still denoted by $\theta$. It sends $\mathfrak{g}_{\alpha}$ to $\mathfrak{g}_{-\alpha}$ 
for $\alpha \in \Phi$.

Assume now that $T$ is any algebraic complex torus. 
There is a natural duality pairing $\left< \lambda , \chi \right>$ 
between $\lambda \in \mathfrak{Y}(T)$
and $\chi \in \mathfrak{X}(T)$ defined as the integer such that 
$\chi\circ \lambda (z) = z^{\left< \lambda , \chi \right>}$ for $z\in \mathbb{C}^*$, 
which gives an isomorphism between $\mathfrak{Y}(T)$ and 
$\mathrm{Hom}(\mathfrak{X}(T),\mathbb{Z})$.
Given an algebraic complex subtorus $S\subset T$, there is a natural inclusion of 
$\mathfrak{Y}(S)$ in $\mathfrak{Y}(T)$ as a direct factor, 
and a natural inclusion of 
$\mathfrak{X}(T/S)$ in $\mathfrak{X}(T)$ as a direct factor, 
where a character of $T/S$ is 
identified with a character of $T$ which is trivial on $S$.
By duality, these inclusions imply that $\mathfrak{Y}(T/S)$ is a quotient 
of  $\mathfrak{Y}(T)$, and that $\mathfrak{X}(S)$ is a quotient of $\mathfrak{X}(T)$.

Recall that since $G$ is reductive, $\mathfrak{g}=\mathfrak{k}\oplus J \mathfrak{k}$,
where $J$ denotes the complex structure on $\mathfrak{g}$. 
Let $\mathfrak{a} = \mathfrak{t}\cap J\mathfrak{k}$. 
Given $\lambda : \mathbb{C}^* \longrightarrow T$ an algebraic one parameter subgroup, we 
consider its restriction to $\mathbb{R}^*_+$ and associate to $\lambda$ 
the derivative $a$ of the restriction at $1$, which lives in the tangent 
space $\mathfrak{t}=T_eT$. In fact, $a\in \mathfrak{a}$. Conversely any 
$a\in \mathfrak{a}$ defines a Lie group morphism 
$\tau \in \mathbb{C} \longmapsto \exp(\tau a)\in T$, 
which factorizes by $\mathbb{C} \longrightarrow 
\mathbb{C}^* = \mathbb{C}/2\pi\mathbb{Z}$ if and only if $a$ is obtained 
as the derivative of an algebraic one parameter subgroup as above.
This correspondence embeds $\mathfrak{Y}(T)$ as a lattice 
in $\mathfrak{a}$, and we identify $\mathfrak{Y}(T)\otimes \mathbb{R}$ 
with $\mathfrak{a}$.
The natural duality $\left<,\right>$ between $\mathfrak{X}(T)\otimes \mathbb{R}$ and 
$\mathfrak{Y}(T)\otimes \mathbb{R}$ translates, for $a\in \mathfrak{a}$ identified 
with an element of $\mathfrak{Y}(T)\otimes \mathbb{R}$, 
and $\chi \in \mathfrak{X}(T)$, as
$\left< \chi, a\right> = \ln \chi(\exp(a))$.

We will denote by $\kappa$ the \emph{Killing form} on $\mathfrak{g}$. It 
defines a scalar product on the semisimple part 
$\mathfrak{a} \cap [\mathfrak{g},\mathfrak{g}]$ of $\mathfrak{a}$.
We choose a scalar product $\left\{\cdot ,\cdot \right\}$ on $\mathfrak{a}$ which 
is invariant under the action of the Weyl group $W$, and whose 
restriction to the semisimple part is the Killing form $\kappa$. 
Note that the vector subspace on which $W$ acts trivially, which 
is the intersection of $\mathfrak{a}$ with the center of $\mathfrak{g}$, 
is then orthogonal to the semisimple part of $\mathfrak{a}$.
Let $e_{\alpha}$, for $\alpha \in \Phi$, be a generator of the root 
space $\mathfrak{g}_{\alpha}$, such that $e_{-\alpha}=-\theta(e_{\alpha})$,
and $[e_{\alpha},e_{-\alpha}]=t_{\alpha}$, where $t_{\alpha}$ is defined as 
the unique element of $\mathfrak{a}$ such that 
$\left\{t_{\alpha},a\right\}=\left< \alpha, a\right>$
for all $a\in \mathfrak{a}$.
More generally, given $\chi \in \mathfrak{X}(T)$, we denote by $t_{\chi}$ the  
 unique element of $\mathfrak{a}$ such that 
$\left\{t_{\chi},a\right\}=\left< \chi, a\right>$
for all $a\in \mathfrak{a}$.

\section{Curvature forms on horospherical homogeneous spaces}
\label{sec_horo_space}

\subsection{Horospherical homogeneous spaces}

We begin this section by introducing horospherical homogeneous spaces.
Our reference for these homogeneous spaces, and the horospherical 
$\mathbb{Q}$-Fano varieties that we will discuss in later sections, 
is \cite{Pas08}. 

\subsubsection{Definition and normalizer fibration}

\begin{defn}
A closed subgroup $H$ of a connected complex reductive group $G$ is called 
\emph{horospherical} if it contains the unipotent radical 
$U$ of a Borel subgroup $B$ of $G$. The homogeneous space $G/H$ is then called a \emph{horospherical
homogeneous space}.
\end{defn}

\begin{exa}
Consider the natural action of $\mathrm{SL}_2(\mathbb{C})$ on $\mathbb{C}^2$.
It has two orbits: the fixed point $0$ and its complement. 
The stabilizer of $(1,0)$ is easily seen to be the set of upper triangular matrices 
with diagonal coefficients equal to one. It is the unipotent radical $U$ 
of the Borel subgroup $B$ of $\mathrm{SL}_2(\mathbb{C})$ consisting of the upper 
triangular matrices, so $\mathbb{C}^2\setminus \{0\}$ is a horospherical homogeneous 
space. 
\end{exa}

A torus $(\mathbb{C}^*)^n$ is a horospherical homogeneous space under its action on 
itself. A generalized flag manifold $G/P$ where $G$ is a semisimple group and $P$
a parabolic of $G$ is a horospherical homogeneous space. Combining the two examples 
gives that products of tori with generalized flag manifolds are examples of 
horospherical homogeneous spaces. In fact all horospherical homogeneous spaces 
are torus fibrations over generalized flag manifolds (see for example \cite{Pas08}):

\begin{prop}
\label{prop_norm_fibr}
Assume that $H$ is horospherical, then the \emph{normalizer fibration} 
\[
N_G(H)/H\longrightarrow G/H \longrightarrow G/N_G(H)
\]
is a torus fibration over a generalized flag manifold.
More precisely, the normalizer $P = N_G(H)$ of $H$ in $G$ is a 
parabolic subgroup containing $B$, and the quotient $P/H=T/T\cap H$ is a torus. 
Conversely, if $H$ is such that $N_G(H)$ is a parabolic subgroup of $G$ 
and $P/H$ is a torus, then $H$ is horospherical. 
\end{prop}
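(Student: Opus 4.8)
The plan is to derive everything from the structure theory of parabolic subgroups, isolating as the single nontrivial input the assertion that $H \supseteq U$ forces $B$ to normalize $H$. Write $\mathfrak{u} = \bigoplus_{\alpha \in \Phi^+} \mathfrak{g}_{\alpha} = \mathrm{Lie}(U)$ and $\mathfrak{h} = \mathrm{Lie}(H)$, so that $\mathfrak{u} \subseteq \mathfrak{h}$. Since we work in characteristic zero, $\mathrm{Lie}(N_G(H)) = \mathfrak{n}_{\mathfrak{g}}(\mathfrak{h})$, and because $\mathfrak{u} \subseteq \mathfrak{h} \subseteq \mathfrak{n}_{\mathfrak{g}}(\mathfrak{h})$, the statement $B \subseteq N_G(H)$ is equivalent to $\mathfrak{t} \subseteq \mathfrak{n}_{\mathfrak{g}}(\mathfrak{h})$, i.e. to $\mathfrak{h}$ being stable under $\mathrm{ad}(\mathfrak{t})$.

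The first and hardest step is to establish this $\mathfrak{t}$-stability. Given $x \in \mathfrak{h}$, decompose it along the root spaces; subtracting its positive part, which lies in $\mathfrak{u} \subseteq \mathfrak{h}$, we may assume $x = t_0 + \sum_{\alpha \in \Phi^-} x_{\alpha}$ with $t_0 \in \mathfrak{t}$ and $x_{\alpha} \in \mathfrak{g}_{\alpha}$. Let $-\beta$ be a root of smallest height (the deepest negative root present) with $x_{-\beta} \neq 0$. Bracketing with $e_{\beta} \in \mathfrak{u} \subseteq \mathfrak{h}$ sends $t_0$ into $\mathfrak{g}_{\beta} \subseteq \mathfrak{u}$ and each $x_{\alpha}$ into $\mathfrak{g}_{\alpha + \beta}$; by minimality of the height of $-\beta$, every such $\alpha + \beta$ with $\alpha \neq -\beta$ is a positive root or is not a root, so $[e_{\beta}, x] = c\, t_{\beta} + (\text{terms in } \mathfrak{u})$ with $c \neq 0$. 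Hence $t_{\beta} \in \mathfrak{h} \cap \mathfrak{t}$. Feeding the recovered toral elements back through $\mathrm{ad}(t_{\beta})\, x = \sum_{\alpha} \alpha(t_{\beta})\, x_{\alpha}$ and iterating, one separates the weights occurring in $x$ by a Vandermonde-type argument, so every homogeneous component of $x$ lies in $\mathfrak{h}$. Thus $\mathfrak{h}$ is a sum of $\mathfrak{t} \cap \mathfrak{h}$ and root spaces, hence $\mathrm{ad}(\mathfrak{t})$-stable, giving $\mathfrak{b} \subseteq \mathfrak{n}_{\mathfrak{g}}(\mathfrak{h})$. I expect this weight-peeling step to be the main obstacle; the remainder is bookkeeping.

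It follows that $N_G(H)^{\circ}$ contains $B$, hence is a parabolic subgroup $P \supseteq B$; since parabolics are self-normalizing, $N_G(H) = P$ is this connected parabolic. Let $L \supseteq T$ be its Levi subgroup and $P^u$ its unipotent radical. By $\mathfrak{t}$-stability, $\mathfrak{h} \cap \mathfrak{l}$ is an $\mathrm{ad}(\mathfrak{l})$-stable subalgebra of $\mathfrak{l}$ (as $\mathfrak{l} \subseteq \mathfrak{p} = \mathfrak{n}_{\mathfrak{g}}(\mathfrak{h})$), hence an ideal, and it contains the maximal nilpotent subalgebra $\mathfrak{u} \cap \mathfrak{l}$ of $\mathfrak{l}$; an ideal of the reductive $\mathfrak{l}$ containing a maximal nilpotent subalgebra must contain $[\mathfrak{l},\mathfrak{l}]$. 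Combined with $\mathfrak{h} \supseteq \mathfrak{u} \supseteq \mathrm{Lie}(P^u)$, this yields $H \supseteq [L,L] \cdot P^u$. Writing $L = T \cdot [L,L]$ and $P = L \ltimes P^u$ gives $P = T \cdot H$, so that $P = N_G(H)$ is parabolic, $G/P$ is a generalized flag manifold, and
\[
N_G(H)/H = P/H \cong T/(T \cap H),
\]
which, as a quotient of a torus, is again a torus. This proves the direct implication together with the normalizer fibration.

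For the converse, suppose $P := N_G(H)$ is parabolic and $P/H$ is a torus. After conjugation $P \supseteq B = TU$. Since $P/H$ is abelian, $[P,P] \subseteq H$; since $P/H$ is reductive, the image of the unipotent radical $P^u$ is a unipotent subgroup of a torus, hence trivial, so $P^u \subseteq H$. Finally $U = (U \cap L) \cdot P^u$, where $U \cap L$ is the maximal unipotent subgroup of $L$ and thus lies in $[L,L] \subseteq [P,P] \subseteq H$, whence $U \subseteq H$ and $H$ is horospherical. The only genuinely delicate point in the whole argument is the $\mathrm{ad}(\mathfrak{t})$-stability of $\mathfrak{h}$; everything else reduces to standard parabolic–Levi structure theory and the fact that quotients of tori are tori.
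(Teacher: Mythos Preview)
The paper does not prove this proposition; it is quoted from \cite{Pas08}, so there is no in-paper argument to compare with. Your overall strategy---reduce everything to the $\mathrm{ad}(\mathfrak t)$-stability of $\mathfrak h$, then read off the parabolic/Levi structure---is the right one, and your treatment of the structure theory once $\mathfrak t$-stability is known (the ideal $\mathfrak h\cap\mathfrak l$ containing a maximal nilpotent, hence $[\mathfrak l,\mathfrak l]$; the identification $P/H\cong T/(T\cap H)$; the converse via $[P,P]\subseteq H$ and $P^u\subseteq H$) is clean and correct.

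The genuine gap is in the ``weight-peeling'' step. Bracketing with $e_\beta$ for the deepest $-\beta$ does give $t_\beta\in\mathfrak h$, but a single $t_\beta$ together with Vandermonde only separates the support of $x$ according to the values $(\alpha,\beta)$, not into individual root components. Your iteration, as written, reuses the \emph{same} minimal-height root on each surviving piece and therefore can stall. Concretely, in type $B_2$ take $x=x_{-e_1}+x_{-(e_1-e_2)}$: the minimal-height root is $-e_1$, so $\beta=e_1$, and both support elements satisfy $(\alpha,e_1)=-1=-(\beta,\beta)$; the Vandermonde with $t_{e_1}$ yields nothing new, and reapplying the $e_\beta$-trick returns the same $t_{e_1}$. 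The fix is to use more of $\mathfrak u$: in the example, $[e_{e_2},x]$ lands in $\mathfrak g_{-(e_1-e_2)}$ (since $2e_2-e_1$ is not a root), so $x_{-(e_1-e_2)}\in\mathfrak h$ and one finishes by subtraction. In general one should bracket with suitable $e_\gamma$ for simple $\gamma$ (raising the height of the negative components one step at a time), rather than only with $e_\beta$; this always strictly reduces the support and gives a clean induction. A minor additional point: your passage from $\mathfrak b\subseteq\mathfrak n_{\mathfrak g}(\mathfrak h)$ to $B\subseteq N_G(H)$ uses $\mathrm{Lie}(N_G(H))=\mathfrak n_{\mathfrak g}(\mathfrak h)$, which is only automatic for $H$ connected; for general $H$ one should first run the argument for $H^\circ$ and then observe $H\subseteq N_G(H^\circ)=P$.
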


\begin{exa}
The normalizer fibration associated to $\mathrm{SL}_2(\mathbb{C})/U$ is the one 
defining the complex projective line: 
$\mathbb{C}^* \longrightarrow \mathbb{C}^2\setminus \{0\} \longrightarrow \mathbb{P}^1.$
In this case, $P=B$ is the subgroup of upper triangular matrices.
\end{exa}

For any closed subgroup $H$, the normalizer $N_G(H)$ acts on $G/H$ 
by multiplication on the right by the inverse. The subgroup $H$ in $N_G(H)$
acts trivially. 
The action of $(g,pH)\in G\times N_G(H)/H$ on the coset $xH\in G/H$ is then 
given by 
$(g,pH)\cdot xH = gxp^{-1}H$.
The isotropy group of $eH$ under this action is the group 
$\{(p,pH), p\in N_G(H) \}$.
We will mainly use this for horospherical homogeneous spaces, 
and we denote the corresponding isotropy group by $\mathrm{diag}(P)$
in this case. It is isomorphic to $P$ \emph{via} the first projection.

\subsubsection{Polar decomposition on a horospherical homogeneous space}

We fix now $H$ a horospherical subgroup of $G$, and denote by $P$ its normalizer.
The inclusion of $\mathfrak{Y}(T\cap H)$ in $\mathfrak{Y}(T)$ gives rise to a 
subspace $\mathfrak{a}_0 \subset \mathfrak{a}$ under the identification of
$\mathfrak{a}$ with $\mathfrak{Y}(T)\otimes \mathbb{R}$.
Let $\mathfrak{a}_1$ denote the orthogonal complement of $\mathfrak{a}_0$
with respect to $\left\{\cdot,\cdot\right\}$.

\begin{prop}
\label{prop_polar_dec}
The image of $\mathfrak{a}_1$ in $G$ under the exponential is a fundamental 
domain for the action of $K\times H$ on $G$, where $K$ acts by multiplication 
on the left and $H$ by multiplication on the right by the inverse. 
As a consequence, the set $\{\exp(a)H; a\in \mathfrak{a}_1\}\subset G/H$ is a 
fundamental domain for the action of $K$ on $G/H$.
\end{prop}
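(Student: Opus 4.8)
The plan is to show that the map $a\mapsto \exp(a)$ identifies $\mathfrak{a}_1$ with a complete set of representatives for the double cosets $K\backslash G/H$. Since the $(K\times H)$-orbit of $g$ is exactly the double coset $KgH$, and the $K$-orbit of $gH$ in $G/H$ is the image of this double coset, both assertions of the proposition are reformulations of the bijectivity of this map; so it suffices to prove the latter. The first reduction is geometric: by the Iwasawa decomposition $G=KB\subset KP$, so $K$ acts transitively on the flag manifold $G/P=K/K_L$ with $K_L:=K\cap P$. Since the unipotent radical $P^u$ meets the compact group $K$ trivially, $K_L=K\cap L$ is a maximal compact subgroup of the Levi $L$, in particular compact and connected. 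Using the normalizer fibration $P/H\to G/H\to G/P$ of Proposition~\ref{prop_norm_fibr}, a slice argument then shows that restriction to the fibre $P/H$ over the base point induces a bijection $K_L\backslash(P/H)\longrightarrow K\backslash G/H$: surjectivity is transitivity of $K$ on $G/P$, and injectivity holds because any $k\in K$ sending one fibre point to another must fix the base point, hence lie in $K_L$.

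It then remains to analyse the residual space $K_L\backslash(P/H)$. By Proposition~\ref{prop_norm_fibr} the quotient $S:=P/H\cong T/(T\cap H)$ is a torus, and since $H$ is normal in $P$ the group $K_L\subset P$ acts on $S$ by \emph{translation} through the homomorphism $K_L\to S$. The key point is to identify the image of this homomorphism with the maximal compact subtorus $S_c$ of $S$: as $S$ is abelian the map factors through the abelianization of $K_L$, so its image coincides with the image of a maximal torus $T_c=T\cap K$ of $K_L$; and a surjection of complex tori carries maximal compact onto maximal compact, whence this image is exactly $S_c$. Consequently $K_L\backslash S=S/S_c$ is the real split part of $S$.

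Finally I would identify $S/S_c$ with $\exp(\mathfrak{a}_1)$. Writing the polar decomposition $T=T_c\cdot\exp(\mathfrak{a})$, the real split part of $T\cap H$ is $\exp(\mathfrak{a}_0)$ by the very definition $\mathfrak{a}_0=\mathfrak{Y}(T\cap H)\otimes\mathbb{R}$, so the split part of $S=T/(T\cap H)$ is $\exp(\mathfrak{a})/\exp(\mathfrak{a}_0)=\exp(\mathfrak{a}/\mathfrak{a}_0)$. As $\mathfrak{a}_1$ is a linear complement of $\mathfrak{a}_0$ in $\mathfrak{a}$, the exponential restricts to a bijection from $\mathfrak{a}_1$ onto $S/S_c$, and since $\exp(\mathfrak{a}_1)\subset T\subset P$ this bijection is precisely the composite of those constructed above. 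This shows that each double coset $KgH$ contains exactly one element of the form $\exp(a)$ with $a\in\mathfrak{a}_1$, which is both statements of the proposition.

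I expect the main obstacle to be the identification of the image of $K\cap P$ in the torus $S$ with the \emph{full} maximal compact subtorus $S_c$. This is the step explaining why all of $\mathfrak{a}_1$ appears, rather than a Weyl chamber as in the usual $KAK$ (Cartan) decomposition: the residual action on $S$ is by translations and not by reflections, so there is no Weyl folding. Concretely one must check that no nontrivial element of $W=N_K(\mathfrak{a})/Z_K(\mathfrak{a})$ is absorbed into the right quotient by $H$, i.e. that Weyl representatives do not lie in $K_L$; this is automatic once the problem has been pushed down the normalizer fibration, but it is the geometric heart of the statement.
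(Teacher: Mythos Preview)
Your argument is correct, but it takes a genuinely different route from the paper's. The paper proceeds in two lines: it invokes the global Iwasawa decomposition $G=K\exp(\mathfrak{a})U$ (a diffeomorphism), writes $g=k\exp(a)u$ with $a\in\mathfrak{a}$ unique, splits $a=a_0+a_1\in\mathfrak{a}_0\oplus\mathfrak{a}_1$, and observes that $\exp(a_0)u\in H$ since $\exp(\mathfrak{a}_0)\subset T\cap H$ and $U\subset H$. Your approach instead uses the normalizer fibration $P/H\to G/H\to G/P$ to reduce to the residual $K_L$-action on the torus fiber $S=P/H$, then identifies that action as translation by the maximal compact subtorus $S_c$ and reads off $S/S_c\cong\exp(\mathfrak{a}_1)$.

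What each approach buys: the paper's argument is considerably shorter and requires no structural analysis of the fiber, but it is terse on uniqueness---strictly speaking, one still needs to check that the Iwasawa $\mathfrak{a}_1$-component of $g$ depends only on the double coset $KgH$, and filling this in essentially reproduces your torus computation. Your fibration argument is longer but makes the uniqueness transparent and explains conceptually \emph{why} no Weyl chamber appears: the action on the fiber is by translation rather than reflection, exactly as you note in your final paragraph. Your worry about Weyl representatives lying in $K_L$ is harmless here---some do (those from $W_L$), but they map to the identity in the abelian quotient $S$, which is what your factorization through the abelianization of $K_L$ already handles.
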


\begin{proof}
The case when the horospherical subgroup is the unipotent radical $U$ of $B$
is a classical result known as the \emph{Iwasawa decomposition} 
(see for example \cite[Chapter IX, Theorem 1.3]{Hel78}).
It states more precisely that the map 
\[
\begin{tikzcd}
K\times \mathfrak{a} \times U \arrow[r]  & G, \quad (k,a,u) \arrow[r, maps to] & k\exp(a)u
\end{tikzcd} 
\]
is a diffeomorphism.

Now let $H$ be any horospherical subgroup containing $U$, and $\mathfrak{a}_1$ 
be as defined above. Given $g\in G$, use the Iwasawa decomposition to write
$g=k\exp(a)u$ where $a\in \mathfrak{a}$ is uniquely determined. Decompose 
$a$ as $a_0+a_1 \in \mathfrak{a}_0\oplus \mathfrak{a}_1=\mathfrak{a}$. 
Then $g=k\exp(a_1)\exp(a_0)u$ where $k\in K$, $\exp(a_0)u \in H$ and 
$a_1\in \mathfrak{a}_1$ uniquely determined. 
\end{proof}

\subsection{Hermitian metrics on linearized line bundles}

We will now associate to a $K$-invariant hermitian metric on a linearized bundle 
on a horospherical homogeneous space two functions.
One function will be associated to the pull back of the line bundle to $G$ under the 
quotient map, and the other will be associated to the restriction to $\exp(\mathfrak{a}_1)$.
Let us first describe the linearized line bundles obtained by this pull back and 
this restriction.

\subsubsection{Associated linearized line bundles}

\label{sec_sections}

\begin{defn}
Let $X$ be a $G$-variety. A \emph{$G$-linearized line bundle} $L$ on $X$ is a line bundle 
on $X$ equipped with an action of $G$ such that the bundle map $L\longrightarrow X$ 
is equivariant and the maps between the fibers induced by the action are linear.
\end{defn}

Let $G/H$ be a horospherical homogeneous space. 
Let $L$ be a $G\times P/H$-linearized line bundle on $G/H$.
The fiber $L_{eH}$ above $eH\in G/H$ defines a one dimensional representation of the 
isotropy group $\mathrm{diag}(P)$. Denote by $\chi$ the character of $P$
associated to this representation, so that 
$(p,pH)\cdot \xi = \chi(p) \xi$ for $\xi \in L_{eH}$.

Consider the quotient map $\pi : G\longrightarrow G/H, g\longmapsto gH$, and the 
pull back $\pi^*L$ of the line bundle $L$ to $G$. We still denote by $\pi$ the 
induced map $\pi^*L\longrightarrow L$. Since $\pi$ is equivariant 
under the action of $G$ by multiplication on the left, $\pi^*L$ admits a 
pulled back $G$-linearization. 
Let us choose a non zero element $s(e) \in (\pi^*L)_e$. Together with the linearization, 
it provides a global trivialization 
\[
s : g \in G \longmapsto g\cdot s(e) \in (\pi^*L)_g.
\]

Denote by $\iota : P/H\longrightarrow G/H$ the inclusion. It is equivariant 
under the action of $P\times P/H$.
The restriction $\iota^*L$ of the line bundle $L$ to $P/H$ provides a 
$P \times P/H$-linearized line bundle on the torus $P/H$. 
In general the linearizations of $\iota^*L$ for the action of $P\times \{eH\}\simeq P$
and the action of $P$ through $\{e\}\times P/H$ are different. 
They are related by:
\begin{align*}
(p,H)\cdot \xi & = (e,p^{-1}H)(p,pH) \cdot \xi \\
				& = \chi(p) (e,p^{-1}H)\cdot \xi
\end{align*}
for $p\in P$, $\xi\in \iota^*L$.

We still denote by $\iota$ the inclusion $\iota^*L\longrightarrow L$.
We define two trivializations $s_l$ and $s_r$ of $\iota^*L$: 
\begin{align*}
s_l(pH) & = (p,H)\cdot (\iota^{-1}\circ \pi)(s(e)), \\
s_r(pH) & = (e, p^{-1}H)\cdot (\iota^{-1}\circ\pi)(s(e)).
\end{align*}
By the previous calculation, they satisfy 
\[
s_l(pH)=\chi(p) s_r(pH).
\]

The following diagram summarizes the objects introduced in this subsection.
\[
\begin{tikzcd}
\pi^*L \arrow[d] \arrow[r] & 
	L \arrow[d] & 
		\iota^*L \arrow[l, hook'] \arrow[d] 
\\
G  \arrow[r,"\pi"] \arrow[u, bend left=50, "s"] & 
	G/H & 
		P/H \arrow[l, hook', "\iota"'] \arrow[u, bend left=50, "s_l"] \arrow[u, bend right=50, "s_r"'] 
\end{tikzcd}
\]

\subsubsection{Functions associated to hermitian metrics}

Let $L$ be a $G\times P/H$-linearized line bundle on $G/H$, and denote by $\chi$ 
the corresponding character of $\mathrm{diag}(P)$. Let $q$ be a smooth hermitian 
metric on $L$. 
Consider the pull-back $\pi^*q$ of the metric $q$ to $\pi^*L$. 
Denote by $\phi: G\longrightarrow \mathbb{R}$ the potential of $\pi^*q$ with respect to the section $s$, 
that is the function on $G$ defined by 
\[
\phi(g)=-2\ln|s(g)|_{\pi^*q}.
\]
We associate to $q$ another function, this time associated with its 
restriction to $L|_{P/H}$.
Denote by $u : \mathfrak{a}_1 \longrightarrow \mathbb{R}$ the function 
defined by 
\[
u(x)=-2\ln|s_r(\exp(x)H)|_{\iota^*q}.
\]

\begin{prop}
\label{prop_phi_u}
Assume that the metric $q$ is invariant under the action of the compact group $K$.
Then $q$ is uniquely determined by $u$. Furthermore, we have 
\[
\phi(k\exp(x)h) = u(x)-2\ln|\chi(\exp(x)h)|.
\]
for any $k\in K$, $x\in \mathfrak{a}_1$ and $h\in H$.
\end{prop}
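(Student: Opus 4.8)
The plan is to reduce everything to the polar decomposition of Proposition~\ref{prop_polar_dec} and then unwind the definitions of $\phi$ and $u$ through the trivializations $s$, $s_l$, $s_r$ and the character $\chi$. Writing $g=k\exp(x)h$ with $k\in K$, $x\in\mathfrak{a}_1$ and $h\in H$, where $x$ is uniquely determined, the formula follows once I control three separate effects: the left $K$-invariance, the contribution of the right factor $h\in H$, and the identification of the $\exp(x)$-part with $u(x)$. First I would record that $\phi$ is invariant under left translation by $K$: indeed $s(kg)=k\cdot s(g)$ by construction of the section $s$, and since $q$ is $K$-invariant and $\pi$ is equivariant for the left $G$-action, the pulled-back metric $\pi^*q$ is left-$K$-invariant, so $\phi(kg)=\phi(g)$. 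This reduces the statement to computing $\phi(\exp(x)h)$.

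Next I would push the computation through $\pi$ into $L$ itself. Setting $\xi_0=\pi(s(e))\in L_{eH}$, the equivariance of $\pi$ gives $\phi(g)=-2\ln|g\cdot\xi_0|_q$, where $g$ acts via the left $G$-linearization, that is, through the $G\times\{eH\}$ factor of the $G\times P/H$-linearization. For $h\in H$ one has $hH=eH$, so $(h,eH)=(h,hH)$ lies in $\mathrm{diag}(P)$ and therefore acts on $L_{eH}$ by the scalar $\chi(h)$; hence $h\cdot\xi_0=\chi(h)\xi_0$ and
\[
\phi(\exp(x)h)=-2\ln|\chi(h)|-2\ln|\exp(x)\cdot\xi_0|_q.
\]

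It then remains to recognise the last term. The key observation is that the section $s$, pulled back and restricted, matches $s_l$ and not $s_r$: by definition $s_l(\exp(x)H)=(\exp(x),eH)\cdot(\iota^{-1}\circ\pi)(s(e))$, and since $\exp(x)\in T\subset P$ the inclusion $\iota$ is equivariant for $(\exp(x),eH)$, so the image of $s_l(\exp(x)H)$ under $\iota$ is exactly $\exp(x)\cdot\xi_0$. Thus $|\exp(x)\cdot\xi_0|_q=|s_l(\exp(x)H)|_{\iota^*q}$, and feeding in $s_l(pH)=\chi(p)s_r(pH)$ turns this into $|\chi(\exp(x))|\,|s_r(\exp(x)H)|_{\iota^*q}$, hence into $u(x)$ plus a $\chi(\exp(x))$ contribution. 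Collecting the two logarithmic $\chi$-terms and using that $\chi$ is a character yields $\phi(\exp(x)h)=u(x)-2\ln|\chi(\exp(x)h)|$, as claimed. Uniqueness is then immediate: since $\{\exp(x)H;x\in\mathfrak{a}_1\}$ is a fundamental domain for $K$ on $G/H$ and $q$ is $K$-invariant, $q$ is determined by its restriction to this set, which is encoded precisely by $u$ together with the fixed datum $\chi$.

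The step I expect to require the most care is the bookkeeping between the two commuting actions: correctly identifying that right multiplication by $h$ produces the factor $\chi(h)$, while the trivialisation pulled back from $s$ is $s_l$ rather than $s_r$, so that passing to $u$ generates the extra factor $\chi(\exp(x))$. Getting either identification wrong would spoil the precise shape of the $\chi$-term, which is the whole content of the formula; everything else is a routine manipulation of the pulled-back and restricted metrics.
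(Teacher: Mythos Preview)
Your proof is correct and follows essentially the same route as the paper: reduce to $\phi(\exp(x)h)$ by left $K$-invariance of $\pi^*q$ and the invariance of $s$, extract the factor $\chi(h)$ from the $\mathrm{diag}(P)$-action on the fiber $L_{eH}$, identify the remaining $\exp(x)$-contribution with $s_l$, and pass to $s_r$ via $s_l(pH)=\chi(p)s_r(pH)$ to recover $u(x)$ and the combined $\chi(\exp(x)h)$ term. The only cosmetic difference is that the paper manipulates $\pi(s(\exp(x)h))$ directly while you first rewrite $\phi(g)=-2\ln|g\cdot\xi_0|_q$ with $\xi_0=\pi(s(e))$; the content is identical.
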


\begin{proof}
It is clear that $\pi^*q$ is completely determined by its global potential $\phi$
on $G$, thus that $q$ is completely determined by $\phi$. We only need to prove 
the relation between $u$ and $\phi$, since any $g\in G$ can be written as 
$g =k\exp(x)h$ with $k\in K$, $x\in \mathfrak{a}_1$ and $h\in H$ by Proposition
\ref{prop_polar_dec}.

First remark that since $q$ is invariant under the action of $K$, and $\pi$ 
is equivariant under the action of $G$, $\pi^*q$ is also invariant under the action 
of $K$. Then the invariance of the section $s$ yields 
\[
\phi(k\exp(x)h)=-2\ln|s(k\exp(x)h)|_{\pi^*q}
=-2\ln|k\cdot s(\exp(x)h)|_{\pi^*q} = \phi(\exp(x)h).
\]

We then remark that 
\begin{align*}
\pi(s(\exp(x)h)) & = \pi((\exp(x)h)\cdot s(e)) \\
	& = (\exp(x)h,H)\cdot \pi(s(e)) \\
	& = (\exp(x),H) (h,hH) \cdot \pi(s(e)) \\
	& = \chi(h) (\exp(x),H)\cdot \pi(s(e)) 
\end{align*}
by definition of $\chi$.

We then write, since $\exp(x)\in P$,
\begin{align*}
\phi(\exp(x)h) & = -2\ln(|\chi(h)||(\exp(x),H)\cdot \pi(s(e))|_q) \\
	& = -2\ln|(\exp(x),H)\cdot \iota^{-1} \circ \pi(s(e))|_{\iota^*q} -2\ln|\chi(h)| \\
	& = -2\ln|s_l(\exp(x)H)|_{\iota^*q} -2\ln|\chi(h)| \\
\intertext{we then use the relation between sections described in Section \ref{sec_sections} to obtain}
\phi(\exp(x)h) & = -2\ln(|\chi(\exp(x))||s_r(\exp(x)H)|_{\iota^*q}) -2\ln|\chi(h)| \\
	& = -2\ln|s_r(\exp(x)H)|_{\iota^*q} -2\ln|\chi(\exp(x)h)|. 
\end{align*}
Recalling the definition of $u$, we obtain the statement.
\end{proof}

\subsection{Pointwise expression of a curvature form}

Let $L$ be a $G\times P/H$-linearized line bundle on $G/H$, with associated character
of $\mathrm{diag}(P)$ denoted by $\chi$.
Let $q$ be a $K$-invariant smooth hermitian metric on $L$, with associated functions 
$\phi : G \longrightarrow \mathbb{R}$ and
$u : \mathfrak{a}_1 \longrightarrow \mathbb{R}$.

Recall that the curvature form $\omega$ of $q$ is a global (1,1)-form defined locally as follows. 
If $s_0 : U\subset G/H \longrightarrow L$ is a local trivialization of $L$, 
and $\psi(z):=-2\ln|s_0(z)|_q$, then $\omega = i\partial \bar{\partial} \psi$ 
on $U$. 
If $q$ is invariant under the action of $K$ then $\omega$ is also invariant.
We want to compute the expression of $\omega$ in terms of $\chi$ and $u$.

In general we cannot find a global trivialization of $L$ on $G/H$, and  
cannot find a global potential for $\omega$. This is the case in particular
for generalized flag manifolds.
We will bypass this difficulty by computing $\omega$ through its pull back to 
$G$ under the quotient map.
This approach is similar to the use of \emph{quasipotentials} by Azad and Biswas 
in \cite{AB03} for generalized flag manifolds.

We can identify the tangent space at $eH$ to $G/H$ with 
\[
\mathfrak{g}/\mathfrak{h} \simeq 
\bigoplus_{\alpha \in \Phi_{P^u}} \mathbb{C} e_{-\alpha}
\oplus \mathfrak{a}_1 \oplus J\mathfrak{a}_1.
\]
Choose a basis $l_1,\ldots, l_r$ of the real vector space $\mathfrak{a}_1$.
Denote by $(x_1,\ldots,x_r)$ the corresponding coordinates of a point 
$x= \sum_i x_il_i \in \mathfrak{a}_1$.
A complex basis of the tangent space $T_{eH}G/H$ is then given by the union of the $l_j$ and 
the $e_{-\alpha}$ for $\alpha \in \Phi_{P^u}$.

On $P/H \subset G/H$, given a tangent vector $\xi$ at the coset $eH$, 
we can define a smooth real holomorphic vector field $R\xi$, 
invariant under the action of $P/H$ 
by multiplication on the right, simply by transporting the given tangent 
vector by the holomorphic action: 
\[ 
R\xi : pH \longmapsto (H,p^{-1}H)\cdot \xi \in T_{pH}P/H.
\]

Consider the complex basis of holomorphic (1,0)-vector fields 
composed of the 
$(Rl_j-iJRl_j)/2$ and 
$(Re_{-\alpha}-iJRe_{-\alpha})/2$, where $J$ denotes the complex 
structure on $G/H$, and $i$ is the complex structure coming from the 
complexification in $TG/H \otimes \mathbb{C}$.
We denote the dual basis of (1,0)-forms by $\{\gamma_j\}_j\cup \{\gamma_{\alpha}\}_{\alpha}$.
We will compute the curvature (1,1)-form $\omega$ pointwise in the basis of 
(1,1)-forms obtained from these. 

\begin{thm}
\label{thm_curv_form}
Let $\omega$ be the $K$-invariant curvature form of a $K$-invariant metric $q$.
Then the form $\omega$ is determined by its restriction to $P/H$, given 
for $x\in \mathfrak{a}_1$ by  
\[
\omega_{\exp(x)H} =  \! \sum_{1\leq j_1,j_2 \leq r} \!
\frac{1}{4} \frac{\partial^2 u}{\partial x_{j_1}\partial x_{j_2}}(x)
i\gamma_{j_1}\wedge \bar{\gamma}_{j_2}   
 + 
\! \sum_{\alpha \in \Phi_{P^u}} \! \left< \alpha, \nabla u(x)/2-t_{\chi} \right>
i\gamma_{\alpha} \wedge \bar{\gamma}_{\alpha}
\]
where $\nabla u$ is the gradient of $u$ with respect to the scalar product 
$\left\{,\right\}$.
\end{thm}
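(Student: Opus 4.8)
The plan is to pull everything back to $G$, where $\pi^*L$ is globally trivialized by the section $s$, so that $\pi^*\omega=i\partial\bar\partial\phi$ is the $i\partial\bar\partial$ of an honest global function. To read off $\omega_{\exp(x)H}$ in the frame $\{\gamma_j,\gamma_\alpha\}$ it then suffices to evaluate $i\partial\bar\partial\phi$ at $\exp(x)\in G$ on lifts of the $(1,0)$-fields $(Rl_j-iJRl_j)/2$ and $(Re_{-\alpha}-iJRe_{-\alpha})/2$. I would build these lifts from the left-invariant holomorphic fields $\eta^L$, $\eta\in\mathfrak{g}$: writing right translation through the adjoint action, $d(R_{\exp(x)})_e\eta=d(L_{\exp(x)})_e(\mathrm{Ad}(\exp(-x))\eta)$, and since $\exp(x)\in T\subset P=N_G(H)$ right translation by $\exp(x)$ descends to $G/H$. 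As $l_j\in\mathfrak{a}_1$ commutes with $x$, the field $Rl_j$ lifts to $l_j^L(\exp(x))$; as $\mathrm{Ad}(\exp(-x))e_{-\alpha}=e^{\langle\alpha,x\rangle}e_{-\alpha}$ with $\langle\alpha,x\rangle\in\mathbb{R}$, the field $Re_{-\alpha}$ lifts to $e^{\langle\alpha,x\rangle}e_{-\alpha}^L(\exp(x))$, the scalar being carried harmlessly through the $(1,0)$-projection.

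The key simplification is that the $\eta^L$ are holomorphic, hence of type $(1,0)$, and a holomorphic left-invariant field commutes with every antiholomorphic one, so that $[\eta^L,\overline{\zeta^L}]=0$. Consequently the Lie-bracket term in $i\partial\bar\partial\phi(\eta^L,\overline{\zeta^L})=i\big(\eta^L\overline{\zeta^L}\phi-([\eta^L,\overline{\zeta^L}])^{0,1}\phi\big)$ drops out, and the matrix of $\omega$ in the given frame is simply the complex Hessian $\big[\eta^L\overline{\zeta^L}\phi\big]$ at $\exp(x)$, weighted by the factors $e^{\langle\alpha,x\rangle}$. The whole computation thus reduces to differentiating $\phi$ twice along the holomorphic flows $\sigma\mapsto\exp(x)\exp(\sigma l_j)$ and $s\mapsto\exp(x)\exp(se_{-\alpha})$. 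For this I would feed in Proposition~\ref{prop_phi_u} together with the polar decomposition of Proposition~\ref{prop_polar_dec}, which give $\phi(g)=u(a_1)-2\langle\chi,a\rangle$ in terms of the $\mathfrak{a}$-part $a=a_0+a_1$ of the Iwasawa decomposition of $g$ (the unipotent factor being annihilated by $\chi$, and $\ln|\chi(\exp(a))|=\langle\chi,a\rangle$).

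For the torus block, $l_{j}\in\mathfrak{a}_1\subset\mathfrak{t}$ gives $\exp(x)\exp(\sigma_1 l_{j_1})\exp(\sigma_2 l_{j_2})=\exp(x+\sigma_1 l_{j_1}+\sigma_2 l_{j_2})$; the imaginary parts of the $\sigma$'s contribute an element of the compact torus which, moved to the left, is absorbed by the $K$-invariance of $\phi$, so $\phi$ depends only on $x+\mathrm{Re}(\sigma_1)l_{j_1}+\mathrm{Re}(\sigma_2)l_{j_2}$ through $u$ (the term $\langle\chi,\cdot\rangle$ being affine and thus invisible to the mixed second derivative). This yields the coefficient $\tfrac14\,\partial^2u/\partial x_{j_1}\partial x_{j_2}$. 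For the flag-diagonal block I would first conjugate, $\exp(x)\exp(se_{-\alpha})=\exp(se^{-\langle\alpha,x\rangle}e_{-\alpha})\exp(x)$, and then run the rank-one ($\mathrm{SL}_2$, root $\alpha$) Iwasawa computation: the $\mathfrak{a}$-part of $\exp(s'e_{-\alpha})$ is $\tfrac12\ln(1+|s'|^2)$ along the coroot, so with the normalizations $e_{-\alpha}=-\theta(e_\alpha)$ and $[e_\alpha,e_{-\alpha}]=t_\alpha$ one gets $\partial_s\partial_{\bar s}a\big|_0=\tfrac12e^{-2\langle\alpha,x\rangle}t_\alpha$. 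Substituting into $u(a_1)-2\langle\chi,a\rangle$, and using $du_x=\{\nabla u(x),\cdot\}$ with $\nabla u(x)\in\mathfrak{a}_1$, together with $\{t_\chi,\cdot\}=\langle\chi,\cdot\rangle$ and $\{t_\alpha,\cdot\}=\langle\alpha,\cdot\rangle$, the weight $e^{2\langle\alpha,x\rangle}$ from the lift cancels and leaves exactly $\langle\alpha,\nabla u(x)/2-t_\chi\rangle$. The off-diagonal entries vanish: the only second-order contributions to the $\mathfrak{a}$-part arise from brackets of the form $[\,\cdot\,,\theta(\cdot)]$, and $[l_j,\theta(e_{-\alpha})]$ as well as $[e_{-\alpha},\theta(e_{-\beta})]$ for $\alpha\neq\beta$ lie in root spaces transverse to $\mathfrak{a}$, hence do not contribute.

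The main obstacle is the rank-one computation in the flag directions. One must verify that the second-order shift of the $\mathfrak{a}$-part is genuinely along the full vector $t_\alpha$ and carries the precise coefficient $\tfrac12e^{-2\langle\alpha,x\rangle}$: it is exactly the pairing of the unprojected $t_\alpha$ against $\chi$ (through $\langle\chi,a\rangle$, not $\langle\chi,a_1\rangle$) that makes the $\mathfrak{a}_0$-components cancel and produces the clean expression $\langle\alpha,\nabla u(x)/2-t_\chi\rangle$. A secondary point to control carefully is the bookkeeping of the factors $e^{\langle\alpha,x\rangle}$ relating the frame $\{Re_{-\alpha}\}$ on $G/H$ to the left-invariant frame on $G$, and the vanishing of all first-order variations (so that each entry is a genuine second derivative); the latter holds because moving along $\exp(se_{-\alpha})$ alters the $\mathfrak{a}$-part only to second order, which is also what guarantees that the diagonal entries are the whole answer.
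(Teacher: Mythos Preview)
Your proposal is correct and follows essentially the same route as the paper. Both pull back to $G$, use $\pi^*\omega=i\partial\bar\partial\phi$, and compute the complex Hessian of $\phi$ at $\exp(x)$ via the Iwasawa decomposition and Proposition~\ref{prop_phi_u}. The differences are organizational rather than substantive.

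The paper works directly with the right-invariant lifts (multiplying on the left by $\exp(z_1f_1+z_2f_2)$) and packages the second-order $\mathfrak{a}$-part of the Iwasawa decomposition into a single preparatory lemma (Lemma~\ref{lem_inf_dec}), proved by Baker--Campbell--Hausdorff, which handles all four blocks at once: for $f=\sum z_j l_j+\sum z_\alpha e_{-\alpha}$ one has $\exp(f)=k\exp(y+O)u$ with $y=\sum x_j l_j+\tfrac12\sum|z_\alpha|^2 t_\alpha$. The torus block, the flag-diagonal block, and both off-diagonal vanishings are then read off from this single formula.

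Your detour through left-invariant fields and the bracket vanishing $[\eta^L,\overline{\zeta^L}]=0$ is a clean way to justify the Hessian formula, but after you conjugate $\exp(se_{-\alpha})$ back across $\exp(x)$ you land in exactly the paper's setup: the factors $e^{\pm\langle\alpha,x\rangle}$ you introduce (once from $\mathrm{Ad}$, once from conjugation) cancel. Your exact rank-one $\mathrm{SL}_2$ Iwasawa, giving $\tfrac12\ln(1+|s'|^2)\,t_\alpha$, recovers precisely the $\tfrac12|s'|^2 t_\alpha$ of Lemma~\ref{lem_inf_dec} at second order. For the off-diagonal vanishing your bracket argument (``$[e_{-\alpha},\theta(e_{-\beta})]$ lies in root spaces transverse to $\mathfrak{a}$'') is exactly the mechanism behind Lemma~\ref{lem_inf_dec}, so you are not avoiding that lemma, only restating its proof in the two-root case.
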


In order to prove the theorem we need to obtain an infinitesimal decomposition, 
adapted to the polar decomposition $G=K\exp(\mathfrak{a}_1)H$, for elements of 
$\mathfrak{g}/\mathfrak{h}$. 
Recall that $U$ is the unipotent radical of $B$ and is a subgroup of $H$. 
It is also the image under the exponential of 
$\oplus_{\alpha\in \Phi^+}\mathbb{C}e_{\alpha}$. 
It will be enough to obtain an infinitesimal decomposition adapted to 
the Iwasawa decomposition $G=K\exp(\mathfrak{a})U$, which is achieved 
by the following lemma.

\begin{lem}
\label{lem_inf_dec}
Let $\{z_j\}_j$ and $\{z_{\alpha}\}_{\alpha}$ denote complex numbers, and let 
\[ f= \sum_{1\leq j \leq r} z_jl_j 
+ \sum_{\alpha \in \Phi_P^+} z_{\alpha}e_{-\alpha}
\in \mathfrak{g}/\mathfrak{h}
\]
Then $\exp(f)=k\exp(y+O)u$ where $k\in K$, $y\in \mathfrak{a}$, $O\in \mathfrak{g}$ 
is of order strictly higher than two in the $z_j$ and $z_{\alpha}$, and 
$u\in U$. Furthermore, if $z_j=x_j+iy_j$,
\[
y=\sum_{1\leq j \leq r} x_j l_j + \sum_{\alpha \in \Phi_{P^u}} 
z_{\alpha}\bar{z}_{\alpha} t_{\alpha}/2.
\]
\end{lem}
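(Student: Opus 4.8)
The plan is to reduce the lemma to a second-order Taylor computation of the $\mathfrak{a}$-component of the Iwasawa projection, and to carry out that computation with the Baker--Campbell--Hausdorff (BCH) formula. The Iwasawa decomposition recalled in the proof of Proposition~\ref{prop_polar_dec} gives a real-analytic global splitting, so I may write $\exp(\tilde f)=\exp(\kappa)\exp(A)\exp(\nu)$ with $\kappa\in\mathfrak{k}$, $A\in\mathfrak{a}$, $\nu\in\mathfrak{u}:=\bigoplus_{\alpha\in\Phi^+}\mathbb{C}e_\alpha=\mathrm{Lie}(U)$ depending analytically on the small parameters $z_j,z_\alpha$, where $\tilde f\in\mathfrak{g}$ denotes the chosen lift of $f$. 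Setting $k=\exp(\kappa)\in K$, $u=\exp(\nu)\in U$, and letting $y$ be the degree-$\le 2$ part of $A$ and $O=A-y\in\mathfrak{a}\subset\mathfrak{g}$ the remainder of order $>2$, the statement becomes the assertion that the second-order Taylor expansion of $A$ equals $\sum_j x_j l_j+\sum_{\alpha}z_\alpha\bar z_\alpha t_\alpha/2$.

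\textbf{Step 1: the infinitesimal (linear) decomposition.} First I would decompose $\tilde f$ explicitly along $\mathfrak{g}=\mathfrak{k}\oplus\mathfrak{a}\oplus\mathfrak{u}$. For the toric directions, $z_jl_j=x_jl_j+iy_jl_j$ with $x_jl_j\in\mathfrak{a}$ and $iy_jl_j\in\mathfrak{k}$, since $l_j\in\mathfrak{a}\subset J\mathfrak{k}$ forces $il_j\in\mathfrak{k}$. For the root directions I would use
\[
z_\alpha e_{-\alpha}=(z_\alpha e_{-\alpha}-\bar z_\alpha e_\alpha)+\bar z_\alpha e_\alpha,
\]
where the second summand lies in $\mathfrak{u}$ and the first lies in $\mathfrak{k}$. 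The membership in $\mathfrak{k}$ is checked using that $\theta$ is the $\mathbb{C}$-\emph{antilinear} conjugation fixing $\mathfrak{k}$ together with $e_{-\alpha}=-\theta(e_\alpha)$, which give $\theta(z_\alpha e_{-\alpha}-\bar z_\alpha e_\alpha)=z_\alpha e_{-\alpha}-\bar z_\alpha e_\alpha$. Thus the linear parts are $\kappa_1=\sum_j iy_jl_j+\sum_\alpha(z_\alpha e_{-\alpha}-\bar z_\alpha e_\alpha)$, $A_1=\sum_j x_jl_j$, and $\nu_1=\sum_\alpha\bar z_\alpha e_\alpha$, with in particular no $\mathfrak{a}$-contribution from the root directions. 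This already yields the first-order part $\sum_j x_jl_j$ of $y$.

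\textbf{Step 2: second order via BCH.} Next I would expand $\tilde f=\mathrm{BCH}(\kappa,A,\nu)$ to second order, which reads $\kappa_2+A_2+\nu_2=-\tfrac12\big([\kappa_1,A_1]+[\kappa_1,\nu_1]+[A_1,\nu_1]\big)$. Projecting onto $\mathfrak{a}$ along $\mathfrak{k}\oplus\mathfrak{u}$ kills $\kappa_2$ and $\nu_2$, so $A_2=-\tfrac12\,p_{\mathfrak{a}}\big([\kappa_1,A_1]+[\kappa_1,\nu_1]+[A_1,\nu_1]\big)$. Since $A_1\in\mathfrak{a}\subset\mathfrak{t}$ acts diagonally, $[\kappa_1,A_1]$ and $[A_1,\nu_1]$ land in root spaces and contribute nothing to $\mathfrak{a}$. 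In $[\kappa_1,\nu_1]$ the only terms landing in $\mathfrak{t}$ are $z_\alpha\bar z_\alpha[e_{-\alpha},e_\alpha]=-z_\alpha\bar z_\alpha\,t_\alpha$ (for $\beta\ne\alpha$ the bracket $[e_{-\alpha},e_\beta]$ lies in $\mathfrak{g}_{\beta-\alpha}$, and the remaining brackets lie in $\mathfrak{u}$). Hence $p_{\mathfrak{a}}[\kappa_1,\nu_1]=-\sum_\alpha z_\alpha\bar z_\alpha\,t_\alpha$ and $A_2=\sum_\alpha z_\alpha\bar z_\alpha\,t_\alpha/2$, which combined with Step 1 gives the claimed $y$.

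\textbf{Main obstacle.} I expect the delicate point to be the bookkeeping in Step 1: the appearance of the conjugate $\bar z_\alpha$ hinges on $\theta$ being antilinear rather than linear, and getting this right is exactly what ensures that the $\mathfrak{u}$-component of $z_\alpha e_{-\alpha}$ is $\bar z_\alpha e_\alpha$, so that the cross bracket $[\kappa_1,\nu_1]$ produces $z_\alpha\bar z_\alpha t_\alpha$ with the correct modulus and sign. Once this decomposition is pinned down, isolating the single contributing bracket in the BCH expansion and using the normalization $[e_\alpha,e_{-\alpha}]=t_\alpha$ is routine.
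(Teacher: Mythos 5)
Your proof is correct and follows essentially the same route as the paper: you use the identical splitting of $f$ into $\mathfrak{k}$-, $\mathfrak{a}$- and $\mathfrak{u}$-components (your $\kappa_1,\nu_1$ are the paper's $A_1$ and $-A_3$), and the same second-order Baker--Campbell--Hausdorff computation in which only the bracket $[z_\alpha e_{-\alpha},\bar z_\alpha e_\alpha]$ contributes to the $\mathfrak{a}$-part. The only difference is organizational -- you Taylor-expand the Iwasawa factors and compare degree-two terms, while the paper peels off the factors $\exp(-A_1)$ and $\exp(A_3)$ explicitly -- and this does not affect the substance of the argument.
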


\begin{proof}
Recall that $\theta$ denotes the Cartan involution on $\mathfrak{g}$, with fixed 
point set $\mathfrak{k}$.
We will denote by $O$ a term in $\mathfrak{g}$ of order strictly higher than two
in the $z_j$ and $z_{\alpha}$, which may change from line to line.

Write $f=A_1+A_2-A_3$, with 
\begin{align*}
A_1 & =\sum_{1\leq j \leq r} y_j J l_j 
+ \sum_{\alpha \in \Phi_{P^u}} (z_{\alpha}e_{-\alpha}+\theta(z_{\alpha}e_{-\alpha})) \in \mathfrak{k} \\
A_2 & =\sum_{1\leq j \leq r} x_j l_j  \in \mathfrak{a}_1 \\
A_3 & =\sum_{\alpha \in \Phi_{P^u}} \theta(z_{\alpha}e_{-\alpha}) 
\in \sum_{\alpha\in \Phi_{P^u}}\mathbb{C}e_{\alpha}.
\end{align*}

Using the Baker-Campbell-Hausdorff formula yields 
\[
\exp(-A_1)\exp(f)\exp(A_3) = \exp(A_2 + 1/2([A_2,A_1]+[A_1,A_3]+[A_2,A_3])+O)
\]

We now decompose 
$A_2 + 1/2([A_2,A_1]+[A_1,A_3]+[A_2,A_3]) = B_1+B_2+B_3$
with $B_1\in \mathfrak{k}$, $B_2\in \mathfrak{a}$ and 
$B_3\in \mathfrak{u}$. 
It is easy to see that $B_1$ and $B_3$ are of order two in the $z_j$, $z_{\alpha}$, 
so that, by the Baker-Campbell-Hausdorff formula again, 
\[
\exp(-B_1)\exp(-A_1)\exp(f)\exp(A_3)\exp(-B_3) = 
\exp(B_2 + O).
\]

We have $\exp(-B_1)\exp(-A_1) \in K$, and $\exp(A_3)\exp(-B_3)\in U$, so to conclude
the proof it remains to compute $y=B_2$.
There are two contributions to this term, the first being $A_2$ and the second, coming 
from $\frac{1}{2}[A_1,A_3]$, which is 
\begin{align*}
\frac{1}{2} \sum_{\alpha \in \Phi_{P^u}} 
[z_{\alpha}e_{-\alpha},\theta(z_{\alpha}e_{-\alpha})]
& = \frac{1}{2} \sum_{\alpha \in \Phi_{P^u}} 
z_{\alpha}\bar{z}_{\alpha} [e_{-\alpha},\theta(e_{-\alpha})] \\
& = \frac{1}{2} \sum_{\alpha \in \Phi_{P^u}} 
z_{\alpha}\bar{z}_{\alpha} t_{\alpha}  
\end{align*}
\end{proof}

We now proceed to prove the theorem.

\begin{proof}[Proof of Theorem \ref{thm_curv_form}]
Let $\pi$ denote again the quotient map $G\longrightarrow G/H$. 
Consider $\pi^*\omega$. 
This is the curvature form of the pulled back metric $\pi^*q$ on 
$\pi^*L$. 
Let $\phi$ be the global potential of $\pi^*q$ on $G$. 
Then $\phi$ is a global $i\partial \bar{\partial}$ potential for $\pi^*\omega$, 
which means that $\pi^*\omega=i\partial \bar{\partial} \phi$.
Recall from Proposition \ref{prop_phi_u} that 
$\phi(k\exp(x)h)=u(x)-2\ln(\chi(\exp(x)h))$.

Consider on $P\subset G$ the right invariant vector fields
$\tilde{R}l_j$, respectively $\tilde{R}e_{-\alpha}$ obtained 
by transporting the elements $l_j$ respectively $e_{-\alpha}$ 
from $\mathfrak{g}\simeq T_eG$ by the action of $P$ on $G$ by multiplication 
on the right by the inverse.
These vector fields are sent to $Rl_j$ respectively 
$Re_{-\alpha}$ by $\pi_*$.
Since for any elements $f_1,f_2$ of $T_gG\otimes \mathbb{C}$, we have 
\[
\pi^*\omega_g(f_1,f_2)=\omega_{\pi(g)}(\pi_*(f_1), \pi_*(f_2))
\]
it will be enough to compute $\pi^*\omega$ on pairs of holomorphic (1,0) 
vector fields $Z_j$ or $Z_{\alpha}$ corresponding to the 
real holomorphic vector fields just defined.

Let $f_1$ and $f_2$ be two elements of $\mathfrak{g}$, and let
$Z_1$, $Z_2$ be
the corresponding right-$P$-invariant holomorphic (1,0) vector fields 
on $P\subset G$ as defined above.
Since $\pi^*\omega = i\partial \bar{\partial} \phi$, we have, 
at $p\in P$,  
\[
(\pi^*\omega)_p(Z_1,\bar{Z}_2) = 
\left. \frac{\partial^2}{\partial z_1 \partial \bar{z}_2}\right|_0 
\phi(\exp(z_1f_1+z_2f_2)p).
\]
We will carry out this computation at $p=\exp(x)$ for $f_1$ and $f_2$ two  
elements in $\mathfrak{g}/\mathfrak{h}$.

Consider now $f$ as in Lemma~\ref{lem_inf_dec} and $x\in \mathfrak{a}_1$. Then 
\begin{align*}
\exp(f)\exp(x) & = k\exp(y+O)u\exp(x) \\
	& = k\exp(y+O)\exp(x)u' \\
\intertext{where $u'\in U$, because $T$ normalizes $U$. Then by the Baker-Campbell-Hausdorff 
formula and since $O$ is of order strictly higher than two in $z_j$, $z_{\alpha}$, there 
exists an $O'\in \mathfrak{g}$, still of order strictly higher than two in $z_j$, $z_{\alpha}$, 
such that}
\exp(f)\exp(x) & = k\exp(x+y+O')u. 
\end{align*}

We deduce that 
$\phi(\exp(f)\exp(x))=\phi(\exp(x+y+O'))$, 
because $u\in U\subset H$, and any character of $P$ vanishes on $U$.
Then we apply this to obtain, given $f_1$, $f_2$ in $\mathfrak{g}/\mathfrak{h}$,
\begin{align*}
(\pi^*\omega)_{\exp(x)}(Z_1,\bar{Z}_2) & =
\left. \frac{\partial^2}{\partial z_1 \partial \bar{z}_2}\right|_0 
\phi(\exp(z_1f_1+z_2f_2)\exp(x)) \\
	& = \left. \frac{\partial^2}{\partial z_1 \partial \bar{z}_2}\right|_0 
\phi(\exp(x+y+O')) \\
\intertext{where $y$ is given by Lemma~\ref{lem_inf_dec} for $f=z_1f_1 + z_2f_2$,}
	& = \left.  \frac{\partial^2}{\partial z_1 \partial \bar{z}_2}\right|_0 
\phi(\exp(x+y)) \\
	& = \left.  \frac{\partial^2}{\partial z_1 \partial \bar{z}_2}\right|_0 
\phi(\exp(x+y^1)\exp(y-y^1)) \\
\intertext{where $y^1$ is the projection in $\mathfrak{a}_1$ of 
$y\in \mathfrak{a}=\mathfrak{a}_0\oplus \mathfrak{a}_1$,}
	& = \left. \frac{\partial^2}{\partial z_1 \partial \bar{z}_2}\right|_0 
\left( u(x+y^1)-2\ln(\chi(\exp(x+y))) \right)\\
	& = \left. \frac{\partial^2}{\partial z_1 \partial \bar{z}_2}\right|_0 
\left( u(x+y^1)-2\left<\chi, x+y\right> \right). 
\end{align*}

Together with the precise value of $y$ given by 
Lemma~\ref{lem_inf_dec}, this allows us to compute the values of 
$(\pi^*\omega)_{\exp(x)}(Z_1,\bar{Z}_2)$
for all choices of $Z_1$, $Z_2$ in the set of right-$P$-invariant 
holomorphic (1,0)-vector fields obtained from elements $f_1$, $f_2$ of 
$\mathfrak{g}/\mathfrak{h}$.

(i) Let us first apply this to $f_1=l_{j_1}$, $f_2=l_{j_2}$. We obtain 
\begin{align*}
(\pi^*\omega)_{\exp(x)}(Z_1,\bar{Z}_2) &
= \left.\frac{\partial^2}{\partial z_1 \partial \bar{z}_2}\right|_0 
\left( u(x+x_1l_{j_1}+x_2l_{j_2})-2\left<\chi, x\right> \right)\\
&
= \left.\frac{1}{4}\frac{\partial^2}{\partial x_1 \partial x_2}\right|_0 
u(x+x_1l_{j_1}+x_2l_{j_2}) \\
& = \frac{1}{4}\frac{\partial^2 u}{\partial x_{j_1} \partial x_{j_2}}(x). 
\end{align*}

(ii) If $f_1=l_j$ and $f_2=e_{-\alpha}$ then Lemma~\ref{lem_inf_dec} gives 
\[
y=x_1l_j+\frac{1}{2}z_2\bar{z}_2t_{\alpha}
\]
and it is easy to see that the double derivative vanishes.

(iii) Similarly, if $f_1=e_{-\alpha_1}$ and $f_2=e_{-\alpha_2}$, with $\alpha_1\neq \alpha_2$, 
then 
\[
y=\frac{1}{2}( z_1\bar{z}_1t_{\alpha_1} + z_2\bar{z}_2t_{\alpha_2})
\]
and the double derivative vanishes again. 

(iv) The remaining case is when $f_1=f_2=e_{-\alpha}$. 
In that case, 
\[
y=\frac{1}{2}|z_1+z_2|^2t_{\alpha},
\] 
so that 
\begin{align*}
(\pi^*\omega)_{\exp(x)}(Z_1,Z_2) &
= \left.\frac{\partial^2}{\partial z_1 \partial \bar{z}_2}\right|_0 
\left( u(x+|z_1+z_2|^2t_{\alpha}^1/2)
-2\left<\chi, x+|z_1+z_2|^2t_{\alpha}/2\right> \right)\\
	& =\left\{\nabla u(x), t_{\alpha}^1/2\right\} - \left< \chi, t_{\alpha} \right> \\
	& =\left\{\nabla u(x)/2 - t_{\chi}, t_{\alpha} \right\}.
\end{align*}
We can indeed replace $t_{\alpha}^1$ with $t_{\alpha}$ because 
$\nabla u(x) \in \mathfrak{a}_1$, and, by definition,
$\mathfrak{a}_1$ is orthogonal to $\mathfrak{a}_0$.
\end{proof}

\section{Test configurations of spherical varieties}
\label{sec_test_config}

\subsection{Colored fans and spherical varieties}
\label{sec_intro_spher}

We first review general results about spherical varieties. 
We will use \cite{Kno91} as main reference for this section. 
The theory was initially developed by Luna and Vust \cite{LV83}.

\begin{defn}
A normal variety $X$ equipped with an action of $G$ is called \emph{spherical} 
if a Borel subgroup $B$ of $G$ acts on $X$ with an open and dense orbit. 
\end{defn}

A homogeneous space $G/H$ which is a spherical variety under the action of $G$
is a \emph{spherical homogeneous space}. A \emph{spherical subgroup} is a 
closed subgroup $H$ such that $G/H$ is a spherical homogeneous space.

Let $X$ be a spherical variety and $x$ a point in the open orbit of $B$.
Denote by $H$ the isotropy group of $x$ in $G$. The pair $(X,x)$ is called 
a \emph{spherical embedding} of the spherical homogeneous space $G/H$, 
and is equipped with a natural inclusion of $G/H$ in $X$ through the 
$G$-equivariant map $gH \longmapsto g\cdot x$.

\begin{exa}
A horospherical homogeneous space is spherical: if $H$ contains the unipotent 
radical $U$ of $B$, and $B^-$ is a Borel subgroup opposite to $B$, then 
$B^-H$ is open and dense in $G$, or equivalently, $B^-H/H$ is open and dense 
in $G/H$.
An embedding of a horospherical space is called a \emph{horospherical embedding} of $G/H$,
or a \emph{horospherical variety}.
\end{exa}

\begin{exa}
The group $G$ itself is a spherical  homogeneous space under the action of 
$G\times G$ defined by $(g_1,g_2)\cdot g=g_1gg_2^{-1}$. Indeed, if $B$ and 
$B^-$ are opposite Borel subgroups of $G$, the \emph{Bruhat 
decomposition} shows that the $B\times B^-$-orbit $BB^-$ is open and dense in $G$. 
\end{exa} 

\subsubsection{Valuation cone and colors}

Let $O$ be a spherical homogeneous space under the action of $G$.
Let $k=\mathbb{C}(O)$ be the function field of $O$. The action of $G$ 
on $k$ is defined by $(g\cdot f)(x)=f(g^{-1}\cdot x)$ for $g\in G$, $f\in k$, 
$x\in O$.

\begin{defn}
A \emph{valuation} of $k$ is a map 
$\nu: k^* = k\setminus\{0\} \longrightarrow \mathbb{Q}$ such that: 
\begin{itemize}
\item $\nu(\mathbb{C}^*)=0$,
\item $\nu(f_1+f_2)\geq \mathrm{min}\{\nu(f_1),\nu(f_2)\}$
when $f_1$, $f_2$ and $f_1+f_2$ are in $k^*$,
\item $\nu(f_1f_2)=\nu(f_1)+\nu(f_2)$ for all $f_1,f_2\in  k^*$.
\end{itemize}
\end{defn}

Let us now choose $B$ a Borel subgroup of $G$.
Define $\mathcal{M}_B(O) \subset \mathfrak{X}(B)$ as the set of characters $\chi$ 
such that there exists a function $f\in k^*$ with $b\cdot f= \chi(b)f$.
It is a subgroup of $\mathfrak{X}(B)$, and hence a finitely generated free abelian group. 

Define $\mathcal{N}_B(O)=\mathrm{Hom}(\mathcal{M}_B(O),\mathbb{Z})$.
To any valuation $\nu$ of $k$ we can associate an element $\rho_{\nu}$
of $\mathcal{N}_B(O) \otimes \mathbb{Q}$, defined by $\rho_{\nu}(\chi)=\nu(f)$
where $f\in k^*$ is such that $b\cdot f = \chi(b)f$ for all $b\in B$.
This is well defined because $B$ has an open and dense orbit, so two such 
functions are non-zero scalar multiples of each other.

It is a fundamental result in the theory that the map 
$\nu \longmapsto \rho_{\nu}$ is injective on the set of $G$-invariant 
valuations, 
and we denote by $\mathcal{V}_B(O)$ the image of the set of $G$-invariant valuations 
of $k$ under this map. 
This is a convex cone in $\mathcal{N}_B(O) \otimes \mathbb{Q}$ called the 
\emph{valuation cone} of $O$ (with respect to $B$).
Although the set of $G$-invariant valuations of $k$ does not depend on 
the choice of Borel subgroup, we will use in the following its image 
$\mathcal{V}_B(O)$, which does depend on the choice of $B$.

As an example, let us record the following characterization of horospherical 
varieties. Other examples will be described in Section~\ref{sec_sym}.
\begin{prop}{\cite[Corollaire 5.4]{BP87}} 
\label{prop_char_horo}
A spherical homogeneous space $O$ is a horospherical homogeneous space
if and only if its valuation cone $\mathcal{V}_B(O)$ is the full space
$\mathcal{N}_B(O)\otimes \mathbb{Q}$.
\end{prop}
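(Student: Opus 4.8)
The plan is to prove the two implications separately: the forward implication using the normalizer fibration of Proposition~\ref{prop_norm_fibr}, and the converse using the wall structure of the valuation cone.

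First I would treat the case where $O=G/H$ is horospherical. By Proposition~\ref{prop_norm_fibr}, the normalizer $P=N_G(H)$ is a parabolic subgroup containing $B$ and $S:=P/H=T/(T\cap H)$ is a torus; moreover $S=N_G(H)/H$ acts on $G/H$ by right translation, commuting with the left action of $G$, and the fibration $G/H\to G/P$ is an $S$-torsor over the flag manifold $G/P$. Since a $B$-semi-invariant rational function is a left-$B$-eigenfunction on $G$ that is right-$H$-invariant, and $U\subset H$ with $P/H=S$, its weight is exactly a character of $P$ trivial on $H$; this gives $\mathcal{M}_B(O)\cong\mathfrak{X}(S)$, and dually $\mathcal{N}_B(O)\cong\mathfrak{Y}(S)$. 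The key point is then that every $\lambda\in\mathfrak{Y}(S)$ is realized by a $G$-invariant valuation. I would take $\nu_\lambda$ to be the weight valuation along the torus fibre, reading off the lowest $\lambda$-degree in the $\mathfrak{X}(S)$-grading of the function field supplied by the right $S$-action. It is $G$-invariant precisely because that $S$-action commutes with the left $G$-action, and since a $B$-semi-invariant $f_\chi$ of weight $\chi$ is simultaneously a right-$S$-eigenvector of weight $\chi$, one computes $\nu_\lambda(f_\chi)=\langle\lambda,\chi\rangle$, that is $\rho_{\nu_\lambda}=\lambda$ (for a suitable sign convention). As $\lambda$ ranges over $\mathfrak{Y}(S)=\mathcal{N}_B(O)$, this forces $\mathcal{V}_B(O)=\mathcal{N}_B(O)\otimes\mathbb{Q}$.

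For the converse I would argue by contraposition, using the converse part of Proposition~\ref{prop_norm_fibr}: it suffices to show that if $\mathcal{V}_B(O)$ is the whole space then $U\subset H$. Here I would invoke the structure of $\mathcal{V}_B(O)$ as a convex cone whose supporting walls are governed by the action of the minimal parabolics $P_\alpha\supset B$, for $\alpha$ a simple root, on the $B$-semi-invariants: each simple root that does not already stabilize the open $B$-orbit in a horospherical fashion contributes a nonzero spherical root $\sigma$, and every $G$-invariant valuation then satisfies $\langle\sigma,\cdot\rangle\leq 0$. If $H$ were not horospherical, some root subgroup $U_\alpha$ would fail to lie in $H$, producing such a $\sigma$ and hence a proper supporting half-space, contradicting $\mathcal{V}_B(O)=\mathcal{N}_B(O)\otimes\mathbb{Q}$.

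The main obstacle is the converse, and specifically the step identifying the walls of the valuation cone with spherical roots arising from non-horospherical simple directions. Making this precise requires the rank-one analysis of how a minimal parabolic $P_\alpha$ acts on the open $B$-orbit and on its $B$-semi-invariants, that is Luna's trichotomy of simple roots for a spherical homogeneous space. The forward direction, by contrast, is essentially a direct construction once the normalizer fibration and the identification $\mathcal{M}_B(O)\cong\mathfrak{X}(S)$ are in place.
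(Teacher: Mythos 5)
Note first that the paper offers no proof of this proposition: it is imported verbatim from \cite{BP87} (Corollaire 5.4), so there is no internal argument to compare yours against; I assess your sketch on its own terms. Your forward direction is correct and is the standard construction: for horospherical $G/H$ the torus $S=N_G(H)/H=P/H$ acts on the right, commuting with $G$; each $B$-weight space of rational semi-invariants is one-dimensional and $S$-stable, so every $f_\chi$ is an $S$-eigenvector; and the ``lowest $\lambda$-degree'' valuation attached to $\lambda\in\mathfrak{Y}(S)$ is $G$-invariant with image $\pm\lambda$ in $\mathcal{N}$, forcing $\mathcal{V}=\mathcal{N}\otimes\mathbb{Q}$ since $\mathcal{V}$ is a convex cone containing the whole lattice. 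The only caveat is that for $U\subset H$ it is $B^-H$, not $BH$, that is open in $G$, so the identification $\mathcal{M}_B(O)\cong\mathfrak{X}(S)$ carries a twist (this is why the paper works with $B^-$-semi-invariants in the introduction); it does not affect your conclusion.

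The converse, which you yourself flag as the obstacle, contains a step that is false as stated: ``some root subgroup $U_\alpha$ would fail to lie in $H$, producing such a $\sigma$'' is contradicted by $H=U$ in $\mathrm{SL}_2(\mathbb{C})$, where $U_{-\alpha}\not\subset H$ and yet there is no spherical root and $\mathcal{V}$ is everything. The correct dichotomy is finer: a simple root contributes a spherical root only when its rank-one localization is of type $\mathrm{SL}_2/T$ or $\mathrm{SL}_2/N(T)$, not when it is of type $\mathrm{SL}_2/U$ or $\mathbb{P}^1$. With that correction your strategy does go through, but it leans on the identification of $\mathcal{V}$ with the antidominant chamber of the spherical roots, a theorem that historically is built on top of (and is usually proved using) the very statement at hand, so you would need to redo the rank-one analysis from scratch to avoid circularity. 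A shorter converse is available from ingredients the paper already records in Section~\ref{sec_test_config}: if $\mathcal{V}=\mathcal{N}\otimes\mathbb{Q}$, then any $\lambda\in\mathfrak{Y}(T)$ projects both into the interior of $\mathcal{V}$ and into its linear part, so the associated equivariant degeneration has closed orbit $G/H_0$ with $H_0$ horospherical by Proposition~\ref{prop_iso_horo} and with $G/H_0\cong G/H$ by Proposition~\ref{prop_prod_deg}; hence $H$ itself is horospherical. This is essentially how the cited reference derives the corollary, and it uses only material preceding it there.
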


Denote the set of $B$-stable prime divisors in $O$ by $\mathcal{D}_B(O)$. 
An element of $\mathcal{D}_B(O)$ is called a \emph{color} of $O$.
A color $D\in \mathcal{D}_B(O)$ defines a valuation on $G/H$ 
and thus an element $\rho(D)$ in $\mathcal{N}_B(O)_{\mathbb{Q}}$. 
However, the map 
$\rho: \mathcal{D}_B(O) \longrightarrow \mathcal{N}_B(O) \otimes \mathbb{Q}$ is not 
injective in general. 

We will in general drop the mention of $B$ and $O$ in the notations, as no 
confusion should be possible. 

\subsubsection{Colored fans}

Let $G/H$ be a spherical homogeneous space, and choose $B$ a Borel subgroup of $G$.
Let $\mathcal{V} \subset \mathcal{N}\otimes \mathbb{Q}$ be the valuation cone of $G/H$
with respect to $B$ and let $\mathcal{D}$ be its set of colors.

\begin{defn}\mbox{}
\begin{itemize}
\item A \emph{colored cone} is a pair $(\mathcal{C},\mathcal{R})$, where 
$\mathcal{R}\subset \mathcal{D}$, $0\notin \rho(\mathcal{R})$, and 
$\mathcal{C}\subset \mathcal{N} \otimes \mathbb{Q}$ is a strictly convex cone 
generated by $\rho(\mathcal{R})$ and finitely many elements of $\mathcal{V}$
such that the intersection of the relative interior of $\mathcal{C}$ with 
$\mathcal{V}$ is not empty.
\item Given two colored cones $(\mathcal{C},\mathcal{R})$ and 
$(\mathcal{C}_0,\mathcal{R}_0)$, we say that $(\mathcal{C}_0,\mathcal{R}_0)$ is a 
\emph{face} of $(\mathcal{C},\mathcal{R})$ if $\mathcal{C}_0$ is a face of 
$\mathcal{C}$ and $\mathcal{R}_0=\mathcal{R}\cap \rho^{-1}(\mathcal{C}_0)$.
\item A \emph{colored fan} is a non-empty finite set $\mathcal{F}$ of colored cones 
such that the face of any colored cone in $\mathcal{F}$ is still in $\mathcal{F}$, 
and any $v\in \mathcal{V}$ is in the relative interior of at most one cone.
\end{itemize}
\end{defn}

\begin{thm}{\cite[Theorem 3.3]{Kno91}}
\label{thm_class}
There is a bijection $(X,x) \longmapsto \mathcal{F}_X$ between embeddings of $G/H$ up to 
$G$-equivariant isomorphism and colored fans. 
There is a bijection $Y\mapsto (\mathcal{C}_Y, \mathcal{R}_Y)$ 
between the orbits of $G$ in $X$, and the colored cones in $\mathcal{F}_X$. 
An orbit $Y$ is in the closure of another orbit $Z$ in $X$ if and only if 
the colored cone $(\mathcal{C}_Z, \mathcal{R}_Z)$ is a face of 
$(\mathcal{C}_Y, \mathcal{R}_Y)$.
\end{thm}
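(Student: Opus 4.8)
The plan is to follow the Luna--Vust strategy: first classify the \emph{simple} embeddings (those possessing a unique closed $G$-orbit) by single colored cones, and then assemble arbitrary embeddings from simple pieces, the combinatorial shadow of the gluing being exactly the compatibility conditions defining a colored fan. The starting observation is that every embedding $(X,x)$ is covered by $G$-stable simple open subvarieties: for a point $y$ lying in a closed orbit, the set $X_y$ of points whose $G$-orbit closure contains $y$ is open, $G$-stable, and has the orbit of $y$ as its unique closed orbit; since every orbit closure meets a closed orbit, these $X_y$ cover $X$. Two such charts overlap in a $G$-stable open subset which is again an embedding of $G/H$, so once the bijection is established for simple embeddings it suffices to check that the gluing data is recorded faithfully by the resulting collection of colored cones, i.e. by a colored fan in the sense of the preceding definition.

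For the core step I would attach to a simple embedding $X$ with closed orbit $Y$ the colored cone $(\mathcal{C},\mathcal{R})$, where $\mathcal{R}\subset\mathcal{D}$ is the set of colors whose closure in $X$ contains $Y$, and $\mathcal{C}\subset\mathcal{N}\otimes\mathbb{Q}$ is generated by $\rho(\mathcal{R})$ together with the images $\rho_\nu$ of the $G$-invariant valuations $\nu$ attached to the $G$-stable prime divisors of $X$ containing $Y$. The valuations of the latter lie in $\mathcal{V}$ and the former come from colors, so this is manifestly a colored cone; the requirement that the relative interior of $\mathcal{C}$ meet $\mathcal{V}$ encodes precisely the existence of the closed orbit $Y$.

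The heart of the argument, and the step I expect to be hardest, is the local structure theorem, which reduces everything to toric geometry. Removing from $X$ the closures of the colors \emph{not} in $\mathcal{R}$ yields a $B$-stable affine open subset $X_0$ meeting every $G$-orbit; its stabilizer is a parabolic $P$, and $X_0$ decomposes as $P^u\times Z$ with $Z$ an affine $L$-variety on which the relevant quotient torus acts, the affine embedding of that torus being governed precisely by $\mathcal{C}\cap\mathcal{V}$. Through this decomposition the classification of simple embeddings of $G/H$ is translated into the standard classification of affine toric embeddings: injectivity follows because $(\mathcal{C},\mathcal{R})$ recovers $X_0$ and hence $X$, while surjectivity demands the converse construction — building $Z$, hence $X_0=P^u\times Z$, from a colored cone and generating from it a $G$-variety, then verifying that the result is a normal simple embedding realizing the prescribed data. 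Making this decomposition rigorous rests on two nontrivial inputs, the convexity of the valuation cone $\mathcal{V}$ and the local structure theorem itself, which I would invoke from the Luna--Vust--Knop theory rather than reprove, and which constitute the principal technical obstacle.

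Finally, the orbit-to-cone bijection and the closure-face dictionary follow from the same reduction: the $G$-orbits meeting $X_0$ correspond to torus orbits of $Z$, which correspond to the faces of $\mathcal{C}\cap\mathcal{V}$, the surviving colors being $\mathcal{R}_0=\mathcal{R}\cap\rho^{-1}(\mathcal{C}_0)$. The toric fact that orbit closures correspond to faces then transports verbatim, giving that $Y$ lies in the closure of $Z$ exactly when the colored cone of $Z$ is a face of that of $Y$. The one remaining point requiring care, needed to pass from simple embeddings to the full fan correspondence, is separatedness of the glued variety, which I would verify through the condition that any $v\in\mathcal{V}$ lie in the relative interior of at most one cone of the fan.
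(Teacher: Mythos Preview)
The paper does not prove this theorem at all: it is quoted as \cite[Theorem 3.3]{Kno91} and used as a black box from the Luna--Vust--Knop theory. So there is nothing in the paper to compare your proposal against.

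That said, your sketch is a faithful outline of the standard proof as it appears in Knop's article and in the Luna--Vust theory: cover an embedding by simple open pieces, classify simple embeddings via the local structure theorem by reducing to affine toric varieties, and then identify the gluing with the colored fan axioms. The technical inputs you flag (local structure theorem, convexity of $\mathcal{V}$, separatedness via the fan condition) are exactly the ones Knop uses. One small correction of emphasis: in Knop's argument the orbit--cone bijection is not quite ``faces of $\mathcal{C}\cap\mathcal{V}$'' but colored faces of $(\mathcal{C},\mathcal{R})$ whose relative interior meets $\mathcal{V}$; your formulation conflates these slightly, though the idea is right.
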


The \emph{support} of the colored fan $\mathcal{F}_X$ is defined as 
$|\mathcal{F}_X|=\bigcup \{\mathcal{C} ; (\mathcal{C},\mathcal{R})\in \mathcal{F}_X\}$.

\begin{prop}{\cite[Theorem 4.2]{Kno91}}
A spherical variety $X$ is complete if and only if the support $|\mathcal{F}_X|$
of its colored fan contains $\mathcal{V}$.
\end{prop}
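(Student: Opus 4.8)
The plan is to deduce the criterion from the valuative criterion of properness, reinterpreted combinatorially through Theorem~\ref{thm_class}. Since $X$ is a separated normal variety of finite type with function field $k=\mathbb{C}(O)$, where $O=G/H$ is the open orbit, completeness is equivalent to properness over $\mathbb{C}$, and the latter can be tested on valuation rings of $k$: the variety $X$ is complete if and only if every valuation ring $R$ with $\mathbb{C}\subset R\subset k$ and $\mathrm{Frac}(R)=k$ admits a center on $X$, necessarily unique by separatedness. The entire proof then consists in translating the existence of such centers into the condition $\mathcal{V}\subset |\mathcal{F}_X|$.

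First I would record the local description of centers of $G$-invariant valuations. Given a $G$-invariant valuation $\nu$ of $k$ with image $\rho_{\nu}\in\mathcal{N}\otimes\mathbb{Q}$, the claim is that $\nu$ admits a center on $X$ if and only if $\rho_{\nu}$ lies in the support $|\mathcal{F}_X|$, and that in this case the center is the orbit $Y$ whose colored cone $(\mathcal{C}_Y,\mathcal{R}_Y)$ contains $\rho_{\nu}$ in the relative interior of $\mathcal{C}_Y$. This is the combinatorial heart and follows from the orbit--cone correspondence of Theorem~\ref{thm_class}: on a $G$-stable affine simple open piece the center of $\nu$ is controlled by the pairing of $\rho_{\nu}$ with the $B$-weights of the coordinate ring, and the membership $\rho_{\nu}\in\mathrm{relint}(\mathcal{C}_Y)$ is exactly the statement that $\nu$ is nonnegative on the local ring of the generic point of $Y$ and strictly positive on its maximal ideal. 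Covering $X$ by the simple open subsets attached to the maximal colored cones of $\mathcal{F}_X$ then globalizes the statement.

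The main obstacle, and the step I would treat most carefully, is the reduction from arbitrary valuation rings of $k$ to $G$-invariant ones, i.e. the assertion that $X$ is complete if and only if every $G$-invariant valuation of $k$ admits a center on $X$. One direction is formal, since a center for every valuation ring in particular gives one for the $G$-invariant valuations. For the converse I would associate to an arbitrary valuation $v$ of $k/\mathbb{C}$ a $G$-invariant valuation $\bar{v}$, given by the generic value of $g\mapsto v(g\cdot f)$ (well defined because the finiteness of $B$-orbits on $O$ forces $g\mapsto v(g\cdot f)$ to take only finitely many values with a generic minimum), and then show that a suitable $G$-translate of the center of $\bar{v}$ dominates $v$, so that $v$ too has a center. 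This is precisely the nontrivial Luna--Vust input, and it is where sphericity, through the local structure theorem and the finiteness of $B$-orbits, is genuinely used.

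Finally I would assemble the equivalences. By the very definition of the valuation cone, the map $\nu\mapsto\rho_{\nu}$ sends the set of $G$-invariant valuations bijectively onto $\mathcal{V}$, and every point of $\mathcal{V}$ is realized as some $\rho_{\nu}$. Combining this with the local description of centers, the statement \emph{every $G$-invariant valuation has a center} becomes \emph{$\rho_{\nu}\in|\mathcal{F}_X|$ for all $\nu$}, that is $\mathcal{V}\subset|\mathcal{F}_X|$. Together with the reduction of the previous paragraph this yields that $X$ is complete if and only if $\mathcal{V}\subset|\mathcal{F}_X|$. The two genuinely content-bearing points are thus the orbit--cone translation of centers, which is routine given Theorem~\ref{thm_class}, and the averaging reduction to $G$-invariant valuations, which carries the real work.
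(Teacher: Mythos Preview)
The paper does not provide its own proof of this proposition: it is quoted verbatim from \cite[Theorem 4.2]{Kno91} and used as a black box, with no argument given. There is therefore nothing in the paper to compare your proposal against.

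That said, your sketch is essentially the standard argument one finds in Knop's paper and in the Luna--Vust theory it summarizes: reduce completeness to the valuative criterion of properness, reduce from arbitrary valuations to $G$-invariant ones via the generic-value averaging construction, and then read off centers of $G$-invariant valuations combinatorially from the colored fan. Your identification of the two load-bearing steps is accurate: the reduction to $G$-invariant valuations (your third paragraph) is the genuine content, and the combinatorial description of centers is a direct consequence of the local structure underlying Theorem~\ref{thm_class}. One small caveat: the map $\nu\mapsto\rho_\nu$ is injective on $G$-invariant valuations, but $\mathcal{V}$ is defined as the image in $\mathcal{N}\otimes\mathbb{Q}$, so saying it is a bijection onto $\mathcal{V}$ requires the (true) fact that every rational point of $\mathcal{V}$ is realized by some $G$-invariant valuation; this is part of the Luna--Vust package and you are right to invoke it, but it is worth flagging as a separate input rather than folding it into the definition.
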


Given $X$ a spherical embedding of $G/H$, we denote by $\mathcal{D}_X$ 
its set of colors, which is the union of all sets $\mathcal{R} \subset \mathcal{D}$ for 
$(\mathcal{C},\mathcal{R})\in \mathcal{F}_X$.

\begin{defn}
A spherical variety $X$ is called \emph{toroidal} if $\mathcal{D}_X$ is empty.
\end{defn}

\begin{exa}{\cite[Exemple 1.10]{Pas08}}
A horospherical variety is toroidal if and only if the fibration structure of the horospherical 
homogeneous space given in Proposition~\ref{prop_norm_fibr} extends to the 
embedding. In other words, toroidal horospherical varieties are precisely 
the homogeneous fibrations over generalized flag manifolds, with fibers toric varieties.

In the case of the horospherical homogeneous space 
$\mathbb{C}^2\setminus \{0\} \simeq \mathrm{SL}_2(\mathbb{C})/U$,
there are two complete embeddings: $\mathbb{P}^2$ and the blow up of $\mathbb{P}^2$ 
at one point. The latter is toroidal, and the fibration structure is obvious, while 
the former is not toroidal.
\end{exa}

\subsubsection{Equivariant automorphisms}
\label{sec_equi_aut}

The classification of spherical embeddings up to $G$-equivariant automorphisms, 
together with the deep uniqueness Theorem of Losev \cite{Los09},  
shows that the neutral component of the group of $G$-equivariant 
automorphisms of a spherical variety $X$ is isomorphic to the 
neutral component of the group of $G$-equivariant 
automorphisms of its open $G$-orbit, through the restriction. 
Indeed, the construction in \cite{Kno91} of the colored fan $\mathcal{F}_X$ 
of a spherical embedding $(X,x)$ of $G/H$ does not depend on the choice of 
base point $x$ such that its isotropy group is $H$.

Choose $x$ a base point in the open $G$-orbit in $X$, and let $\sigma$ be an 
equivariant automorphism of the open $G$-orbit.
Then the stabilizer $H$ of $x$ in $G$ is also the stabilizer of its image $\sigma(x)$
by $\sigma$, since $\sigma$ commutes with $G$. 
The colored data of the pointed homogeneous spaces $(G/H,x)$ and $(G/H,\sigma(x))$ 
are thus related by an automorphism, which may differ from the identity 
only by exchanging some colors, by \cite{Los09}.
This exchange of colors is impossible if the equivariant automorphism is 
in the connected component of the identity.   
In this situation, $(X,x)$ and $(X,\sigma(x))$ 
are two embeddings of $G/H$ with the same fan $\mathcal{F}_X$, so they are 
$G$-equivariantly isomorphic by Theorem~\ref{thm_class}, which means that the 
equivariant automorphism of $G/H$ sending $x$ to $\sigma(x)$ extends to $X$. 

If we fix a base point (or rather the stabilizer $H$ of a base point), then we get 
an explicit description of these equivariant automorphisms. 
Indeed, the group of $G$-equivariant automorphisms of $G/H$ is isomorphic to the quotient 
$N_G(H)/H$, whose action on $G/H$ is induced by the action of $N_G(H)$ by multiplication
on the right by the inverse: $p\cdot gH = gHp^{-1} = gp^{-1}H$ 
(see for example \cite[Proposition 1.2]{Tim11}). 

The reader may find a more precise description of the full equivariant 
automorphism group in \cite{Gan18}.

\subsubsection{Morphisms between spherical varieties}

Let $H<H'$ be two spherical subgroups of $G$. Denote by 
$\phi : G/H \longrightarrow G/H'$ the corresponding $G$-equivariant surjective 
map of homogeneous spaces.
It induces 
a surjective homomorphism 
\[
\phi_*: \mathcal{N}(G/H)\otimes \mathbb{Q} \longrightarrow \mathcal{N}(G/H')\otimes \mathbb{Q}.
\]
Let $\mathcal{D}_{\phi}\subset \mathcal{D}(G/H)$ be the set of all $D\in \mathcal{D}(G/H)$
such that $\phi(D)=G/H'$.

\begin{thm}{\cite[Theorem 4.1]{Kno91}}
\label{thm_morphisms}
Let $(X,x)$, respectively $(X',x')$, be a spherical embedding of $G/H$, respectively $G/H'$,
then $\phi:G/H\longrightarrow G/H'$ extends to a $G$-equivariant morphism 
$\phi:X\longrightarrow X'$ sending $x$ to $x'$ if and only if for every colored cone 
$(\mathcal{C},\mathcal{R})\in \mathcal{F}_{X}$, there exists a colored cone 
$(\mathcal{C}',\mathcal{R}')\in \mathcal{F}_{X'}$ such that 
$\phi_*(\mathcal{C})\subset \mathcal{C}'$ and 
$\phi_*(\mathcal{R}\setminus \mathcal{D}_{\phi})\subset \mathcal{R}'$.
\end{thm}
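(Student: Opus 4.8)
The plan is to reduce the global extension problem to a local one over simple charts and then to translate the existence of a morphism into a statement about regular $B$-semiinvariant functions, which is in turn dual to the combinatorial condition. First I would observe that extending $\phi$ is local on the source: cover $X$ by the $G$-stable simple open subvarieties $X_{\mathcal{C}}$ attached to the maximal colored cones $(\mathcal{C},\mathcal{R})\in\mathcal{F}_X$, each having a single closed orbit. A $G$-equivariant morphism $X\to X'$ exists iff each restriction $X_{\mathcal{C}}\to X'$ is a morphism, and these automatically agree on overlaps since they coincide on the dense orbit $G/H$ and $X'$ is separated. Because the image of the closed orbit of $X_{\mathcal{C}}$ under a $G$-morphism is a single $G$-orbit of $X'$, it lies in one simple open subvariety $X'_{\mathcal{C}'}$, and then all of $X_{\mathcal{C}}$ maps into $X'_{\mathcal{C}'}$. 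So it suffices to treat the simple case and show that $\phi$ extends to $X_{\mathcal{C}}\to X'_{\mathcal{C}'}$ iff $\phi_*(\mathcal{C})\subset\mathcal{C}'$ and $\phi_*(\mathcal{R}\setminus\mathcal{D}_\phi)\subset\mathcal{R}'$.

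For necessity I would argue on two levels. On the level of cones, pullback along an extension $\tilde\phi$ must send regular functions to regular functions; using that on a simple chart a $B$-eigenfunction $f_{\chi'}$ of weight $\chi'\in\mathcal{M}(G/H')$ is regular precisely when $\chi'$ is nonnegative on $\mathcal{C}'$ (the cone being generated by $\rho(\mathcal{R}')$ and the $G$-stable divisor rays of $\mathcal{C}'\cap\mathcal{V}'$), and that $\phi^*f_{\chi'}=f_{\phi^*\chi'}$, one gets $\phi^*\chi'$ nonnegative on $\mathcal{C}$ for every $\chi'$ nonnegative on $\mathcal{C}'$; dualizing this family of inequalities and using that $\mathcal{C}'$ is a closed convex cone yields $\phi_*(\mathcal{C})\subset\mathcal{C}'$. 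On the level of colors, for $D\in\mathcal{R}\setminus\mathcal{D}_\phi$ the divisor is not contracted, so its image is a color $D'$ of $G/H'$ with $\phi_*(\rho(D))=\rho(D')$; since $D\in\mathcal{R}$ its closure meets the closed orbit of $X_{\mathcal{C}}$, and $\tilde\phi$ carries this into the closed orbit of $X'_{\mathcal{C}'}$, forcing $D'\in\mathcal{R}'$.

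For sufficiency I would reconstruct the morphism from these two inclusions via the local structure theorem, which presents a $B$-stable affine chart of each simple variety and identifies its regular $B$-eigenfunctions with the weights nonnegative on the corresponding cone. The inclusion $\phi_*(\mathcal{C})\subset\mathcal{C}'$ guarantees, exactly as above, that $\phi^*$ carries regular $B$-eigenfunctions on the chart of $X'_{\mathcal{C}'}$ to regular functions on the chart of $X_{\mathcal{C}}$, and by $G$-equivariance this extends to a homomorphism of the full coordinate rings, hence to a morphism on affine charts. The color condition $\phi_*(\mathcal{R}\setminus\mathcal{D}_\phi)\subset\mathcal{R}'$ is what ensures this morphism actually lands in the chart of $X'_{\mathcal{C}'}$: a non-contracted color $D\in\mathcal{R}$, whose generic point survives in the affine chart, is sent to a color $D'\in\mathcal{R}'$ rather than to a color removed in forming the target chart. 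Gluing over the maximal cones then produces $X\to X'$.

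The hard part will be the color bookkeeping, and the reason it is genuinely subtle is flagged by the earlier remark that $\rho\colon\mathcal{D}\to\mathcal{N}\otimes\mathbb{Q}$ is not injective: the cone condition only sees the images $\rho(D)$, so it cannot by itself detect which actual $B$-divisor a color maps to, and a color $D\in\mathcal{R}$ can have $\phi_*(\rho(D))\in\mathcal{C}'$ while $D$ itself maps to a color outside $\mathcal{R}'$. Keeping the contracted colors $\mathcal{D}_\phi$ (which map dominantly and must impose no condition) separate from the non-contracted ones, and checking that nonnegativity on the cone together with the color condition is not merely necessary but \emph{sufficient} to cut out exactly the regular semiinvariants and to pin down the target chart, is precisely the technical heart where the full Luna–Vust machinery and the local structure theorem enter.
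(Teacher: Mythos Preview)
The paper does not prove this statement; it is quoted as \cite[Theorem 4.1]{Kno91} and used as a black box, so there is nothing in the paper to compare your proposal against.

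For what it is worth, your outline is the standard argument from Knop's paper: the reduction to simple embeddings via the $G$-stable open cover indexed by maximal colored cones, the passage to the $B$-stable affine chart, the characterization of regular $B$-eigenfunctions on that chart by nonnegativity of the weight on the colored cone, and the separate bookkeeping for non-contracted colors via $D\mapsto\overline{\phi(D)}$. Your diagnosis of why the color condition is not subsumed by the cone condition (non-injectivity of $\rho$) is also the correct one. One point to sharpen in the sufficiency direction: the affine chart is only $P$-stable for the adapted parabolic, not $G$-stable, so ``by $G$-equivariance'' is not what gives the ring homomorphism; rather, one uses that the $B$-eigenfunctions generate the coordinate ring of the target chart (a consequence of the local structure theorem), so checking regularity of their pullbacks already yields the morphism of affine charts, and one then passes to the simple variety by $G$-translates.
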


\subsection{Line bundles on spherical varieties}

Let us recall some results from \cite{Bri89} about line bundles on spherical 
varieties. 

\subsubsection{Cartier divisors}

Let $X$ be a complete spherical variety, embedding of some spherical homogeneous space $O$.
Let $\mathcal{I}^G_X$ denote the finite set of $G$-stable prime divisors of $X$.
Any divisor $Y\in \mathcal{I}^G_X$ corresponds to a ray 
$(\mathcal{C},\emptyset) \in \mathcal{F}_X$, 
and we denote by $u_Y$ the indivisible generator of this ray in $\mathcal{N}$.
The set $\mathcal{D}$ of colors of $O$ is in bijection with the set of irreducible $B$-stable
but not $G$-stable divisors in $X$, by associating to a color of $O$ its closure in $X$.

Any $B$-stable Weil divisor $d$ on $X$ writes
\[
d=\sum_{Y\in \mathcal{I}_X^G} n_Y Y + \sum_{D\in \mathcal{D}} n_D \overline{D}
\]
for some integers $n_Y$, $n_D$.
In fact, any Weil divisor is linearly equivalent to a $B$-invariant divisor 
and Brion proved the following criterion to characterize Cartier divisors.

\begin{prop}{\cite[Proposition 3.1]{Bri89}}
A $B$-stable Weil divisor in $X$ is Cartier 
if and only if there exists an integral piecewise linear function $l_d$ 
on the fan $\mathcal{F}_X$ 
such that 
\[
d=\sum_{Y\in \mathcal{I}_X^G} l_d(u_Y) Y + \sum_{D\in \mathcal{D}_X} l_d(\rho(D)) \overline{D} 
+\sum_{D\in \mathcal{D}\setminus \mathcal{D}_X}n_D \overline{D}
\]
for some integers $n_D$.
\end{prop}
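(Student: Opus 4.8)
The plan is to reduce the global statement to a local computation on the simple $G$-stable open pieces of $X$ given by the orbit--cone correspondence, where being Cartier amounts to being principal near the closed orbit, and then to reassemble the resulting linear data into a piecewise linear function on $\mathcal{F}_X$. The starting ingredient is the divisor of a $B$-semi-invariant rational function. By definition of $\mathcal{M}$, every $\chi\in\mathcal{M}$ is the weight of some $f_\chi\in\mathbb{C}(O)^*$ with $b\cdot f_\chi=\chi(b)f_\chi$; its order of vanishing along a $G$-stable divisor $Y$ is $\left<u_Y,\chi\right>$ and along a color $D$ is $\left<\rho(D),\chi\right>$, so that
\[
\mathrm{div}(f_\chi)=\sum_{Y\in\mathcal{I}_X^G}\left<u_Y,\chi\right>Y+\sum_{D\in\mathcal{D}}\left<\rho(D),\chi\right>\overline{D}.
\]
This is the linear model against which local equations of $d$ will be compared.

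Next I would invoke Theorem~\ref{thm_class}: the maximal colored cones $(\mathcal{C},\mathcal{R})\in\mathcal{F}_X$ index $G$-stable simple open subvarieties $X_{\mathcal{C},\mathcal{R}}$ covering $X$, each with a unique closed $G$-orbit $Y_{\mathcal{C}}$ contained in the closure of every orbit it meets. The $G$-stable divisors through $Y_{\mathcal{C}}$ are exactly those whose $u_Y$ is a ray of $\mathcal{C}$ lying in $\mathcal{V}$, and the colors through $Y_{\mathcal{C}}$ are exactly those in $\mathcal{R}$; all other colors avoid $Y_{\mathcal{C}}$. Since $d$ is Cartier iff the $B$-linearized reflexive sheaf $\mathcal{O}_X(d)$ is invertible, and its non-invertible locus is $B$-stable and closed, $d$ is Cartier on $X_{\mathcal{C},\mathcal{R}}$ iff it is principal in a neighborhood of $Y_{\mathcal{C}}$, with a local equation that may be taken $B$-semi-invariant of some weight $\chi_{\mathcal{C}}\in\mathcal{M}$. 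Comparing with the divisor formula above, this is equivalent to the existence of $\chi_{\mathcal{C}}\in\mathcal{M}$ with $n_Y=\left<u_Y,\chi_{\mathcal{C}}\right>$ for every ray $u_Y$ of $\mathcal{C}$ and $n_D=\left<\rho(D),\chi_{\mathcal{C}}\right>$ for every $D\in\mathcal{R}$, while the coefficients of colors meeting no closed orbit stay unconstrained.

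Finally I would glue. Setting $l_d(v)=\left<v,\chi_{\mathcal{C}}\right>$ on each $\mathcal{C}$ gives an integral linear function on each cone; because two maximal cones meet along a common face corresponding to an orbit lying in both closures, their defining characters agree on the intersection, so the local $l_d$ patch to a single integral piecewise linear function on $\mathcal{F}_X$ with $l_d(u_Y)=n_Y$ and $l_d(\rho(D))=n_D$ for $D\in\mathcal{D}_X$. The colors in $\mathcal{D}\setminus\mathcal{D}_X$ occur in no $\mathcal{R}$, hence their coefficients remain free, which yields exactly the stated form; conversely any $d$ of that form is locally principal near each closed orbit, hence Cartier. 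The main obstacle is the local step: justifying that Cartier-ness on a simple piece is detected at the closed orbit and that the local equation can be chosen $B$-semi-invariant of a weight in $\mathcal{M}$. This is where the spherical geometry does the real work---through the fact that $Y_{\mathcal{C}}$ lies in every orbit closure together with the $B$-eigenspace decomposition of the relevant functions---while the divisor computation and the face-compatibility check for the gluing are routine.
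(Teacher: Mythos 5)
The paper does not prove this statement; it is quoted verbatim from Brion (the cited Proposition 3.1 of \cite{Bri89}), so there is no internal proof to compare against. Your outline is essentially Brion's original argument: reduce to the simple $G$-stable pieces $X_{\mathcal{C},\mathcal{R}}$, detect Cartier-ness at the closed orbit $Y_{\mathcal{C}}$ (correct, since any nonempty closed $B$-stable subset $Z$ meets $Y_{\mathcal{C}}$: $G\cdot Z$ is closed because $G/B$ is complete, hence contains $Y_{\mathcal{C}}$, hence $Z\cap Y_{\mathcal{C}}\neq\emptyset$), produce a $B$-semi-invariant local equation, and glue the resulting characters $\chi_{\mathcal{C}}$ into $l_d$ using that each colored cone is generated by $\rho(\mathcal{R})$ together with the $u_Y$ of the $G$-stable divisors containing the corresponding orbit. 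The one step you flag but do not carry out is indeed where the content lies, and it is worth recording how it goes: take a local equation $f$ of $d$ near the generic point of $Y_{\mathcal{C}}$ inside a $B$-stable affine chart, decompose $f$ into $B$-eigenvectors; since $d$ is $B$-stable each eigencomponent is a local section of $\mathcal{O}(d)$, and at least one eigencomponent $f_{\chi}$ generates the stalk along $Y_{\mathcal{C}}$; then $\mathrm{div}(f_{\chi})-d$ is an effective $B$-stable divisor on the simple piece whose support avoids $Y_{\mathcal{C}}$, and since every $G$-stable divisor and every color in $\mathcal{R}$ contains $Y_{\mathcal{C}}$, that support consists only of colors outside $\mathcal{D}_X$ --- which is exactly why those coefficients $n_D$ remain unconstrained. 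With that insertion your argument is complete and matches the proof in the cited source.
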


\subsubsection{Ample Cartier divisors and polytopes}
\label{sec_pol_car}

Let $\mathcal{F}_X^{\mathrm{max}}$ denote the set of cones 
$\mathcal{C} \subset \mathcal{N}\otimes \mathbb{Q}$ of maximal dimension, such that 
there exists $\mathcal{R}\subset \mathcal{D}$ with 
$(\mathcal{C},\mathcal{R})\in\mathcal{F}_X$.
If $d$ is a Cartier divisor and $\sigma \in \mathcal{F}_X^{\mathrm{max}}$,
let $m_{\sigma}$ be the element of $\mathcal{M}$ such that 
$l_d(x)=m_{\sigma}(x)$ for $x\in \sigma$. Since we assumed $X$ complete, $l_d$ is 
uniquely determined by the $m_{\sigma}$.

\begin{prop}{\cite[Th\'eor\`eme 3.3]{Bri89}}
\label{prop_ample}
Assume $X$ is complete and 
\[ 
d=\sum_{Y\in \mathcal{I}_X^G} l_d(v_Y) Y + \sum_{D\in \mathcal{D}_X} l_d(\rho(D)) \overline{D} 
+\sum_{D\in \mathcal{D}\setminus \mathcal{D}_X}n_D \overline{D}
\]  
is a Cartier divisor on $X$. 
It is ample if and only if the following conditions are satisfied:
\begin{itemize}
\item the function $l_d$ is convex,
\item $m_{\sigma_1} \neq m_{\sigma_2}$ if 
$\sigma_1\neq \sigma_2\in \mathcal{F}_X^{\mathrm{max}}$, 
\item $n_D > m_{\sigma}(\rho(D))$ for all $D\in \mathcal{D} \setminus \mathcal{D}_{X}$ 
and $\sigma \in \mathcal{F}_X^{\mathrm{max}}$.
\end{itemize}
\end{prop}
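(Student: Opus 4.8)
The plan is to characterize ampleness of $\mathcal{O}(d)$ through the multiplicity-free $G$-module of its global sections, whose $B$-weights are controlled by a polytope read off from the combinatorial data $(l_d,(n_D))$. First I would attach to $d$ the polytope
\[
Q_d = \{m \in \mathcal{M}\otimes \mathbb{R} : \langle u_Y, m\rangle + l_d(u_Y) \geq 0 \text{ for all } Y \in \mathcal{I}^G_X,\ \langle \rho(D), m\rangle + n_D \geq 0 \text{ for all } D \in \mathcal{D}\setminus \mathcal{D}_X \}.
\]
A $B$-semi-invariant rational function of weight $m$ is a section of $\mathcal{O}(d)$ precisely when $\mathrm{div}(f)+d\geq 0$, and computing the order of vanishing along each $G$-stable divisor $Y$ (which is $\langle u_Y,m\rangle$) and along each color $\overline D$ translates this into exactly the inequalities defining $Q_d$; the colors in $\mathcal{D}_X$ impose nothing new, since their images $\rho(D)$ already lie in the cones of $\mathcal{F}_X$ where $l_d$ governs vanishing. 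Since $X$ is spherical, $H^0(X,\mathcal{O}(d))$ is multiplicity-free, so the lattice points $Q_d\cap\mathcal{M}$ index its irreducible summands. This reduces ampleness — that a high tensor power embeds $X$ — to a statement about how the faces of $Q_d$ sit over the colored fan.

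Next I would split ampleness into global generation plus strict positivity. Global generation should be equivalent to the non-strict forms of the conditions: convexity of $l_d$ together with $n_D\geq l_d(\rho(D))$ for every color $D\notin\mathcal{D}_X$. Using the orbit--cone correspondence of Theorem~\ref{thm_class}, base-point freeness can be checked orbit by orbit: convexity of $l_d$ is exactly what makes each linear piece $m_\sigma$ arise as a vertex of $Q_d$ sitting over the maximal cone $\sigma$, so that the corresponding section does not vanish on the closed orbit attached to $\sigma$, while the color inequalities provide the sections witnessing base-point freeness transverse to the colors. (Passing to a multiple of $d$ takes care of integrality, which is harmless for a question about $\mathbb{Q}$-ampleness.)

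To upgrade global generation to ampleness I would require the morphism defined by a high power of $\mathcal{O}(d)$ to be a closed immersion, i.e. to separate the $G$-orbits and all tangent directions; this forces the strict inequalities. The condition $m_{\sigma_1}\neq m_{\sigma_2}$ for distinct maximal cones is what is needed for the sections to separate the closed $G$-orbits over $\sigma_1$ and $\sigma_2$, equivalently for $Q_d$ to carry a distinct vertex over each maximal cone so that its normal fan recovers the uncolored fan $\mathcal{F}_X$. The strict color inequality $n_D>l_d(\rho(D))$ guarantees that the map does not collapse the directions transverse to a color, i.e. that $\overline D$ is not contracted. For the converse I would argue that if any one condition fails, one exhibits a $B$-stable curve — a wall curve between two maximal cones if strict convexity fails, or a curve meeting a color if the color inequality fails — on which $d$ has non-positive degree, contradicting ampleness by Nakai--Moishezon (or by Kleiman's criterion, using that the Mori cone of a complete spherical variety is generated by classes of $B$-stable curves).

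The main obstacle, and the precise point where the spherical case departs from the toric one, is the treatment of the colors. In the toric case $\mathcal{D}=\emptyset$ and the criterion collapses to strict convexity of $l_d$ alone; here one must pin down how the $B$-stable but not $G$-stable divisors $\overline D$ contribute to positivity. Making the first step rigorous — that the coefficients $n_D$ enter $Q_d$ through strict inequalities governing ampleness along the colors, rather than merely through torus-weight data — requires the full structure of the $G$-module of sections together with a local analysis of $X$ near a color, and this is where the real work lies.
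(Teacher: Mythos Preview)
The paper does not prove this proposition at all: it is stated with an explicit citation to \cite[Th\'eor\`eme 3.3]{Bri89} and no proof is given, as it is quoted purely as background from Brion's work on line bundles on spherical varieties. So there is no ``paper's own proof'' against which to compare your attempt.

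That said, your sketch is broadly in the spirit of Brion's original argument: identifying $B$-semi-invariant sections with lattice points of a polytope, then testing ampleness against $B$-stable curves (whose classes generate the cone of curves on a complete spherical variety). A couple of points deserve care if you want to turn this into a proof. First, your polytope $Q_d$ omits the inequalities coming from colors $D\in\mathcal{D}_X$; your claim that these ``impose nothing new'' because $\rho(D)$ lies in a cone of the fan is not automatic and needs justification (it uses that $l_d$ is already the support function on those rays). Second, the passage from ``separating closed orbits'' to the injectivity condition $m_{\sigma_1}\neq m_{\sigma_2}$ is more delicate than you suggest: distinct maximal cones can share a closed orbit in the colored setting, and what one really computes is the intersection number of $d$ with the $B$-stable curve associated to a wall, which comes out as a difference of the $m_\sigma$ evaluated on a generator. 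Your final paragraph correctly identifies the genuine difficulty, namely that the colors require a separate local analysis absent in the toric case.
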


To a Cartier divisor $d$, we associate a polytope 
$\Delta_d\subset \mathcal{M}\otimes \mathbb{R}$ 
defined as the set of $m\in \mathcal{M}\otimes \mathbb{R}$ such that 
$m\in -m_{\sigma} + \sigma^{\vee}$ for all $\sigma \in \mathcal{F}_X^{\mathrm{max}}$, 
and $m(\rho(D))+n_D\geq 0$ for all $D\in \mathcal{D}\setminus \mathcal{D}_X$.

The support function 
$v_{\Delta} : \mathcal{N}\otimes \mathbb{R} \longrightarrow \mathbb{R}$ of a 
polytope $\Delta \subset  \mathcal{M}\otimes \mathbb{R}$ is defined by 
\[
v_{\Delta}(x)=\mathrm{sup} \{ m(x); m\in \Delta \}.
\] 
If $d$ is ample then $l_d(x)=v_{\Delta_d}(-x)$ for $x\in |\mathcal{F}_X|$. 

\subsubsection{Linearized line bundles and moment polytopes}

Let $L$ be a $G$-linearized ample line bundle on a spherical variety $X$.
Let $B$ be a Borel subgroup of $G$ and $T$ a maximal torus of $B$. 
Denote by $V_{\lambda}$ an irreducible representation of $G$ of  
highest weight $\lambda \in \mathfrak{X}(T)$ with respect to $B$.
Since $X$ is spherical, for all $r\in \mathbb{N}$, there exists a 
finite set $\Delta_r\subset \mathfrak{X}(T)$ such that 
\[ 
H^0(X,L^r)= \bigoplus_{\lambda\in \Delta_r} V_{\lambda}.              
\]

\begin{defn}   
The \emph{moment polytope} $\Delta_L$ of $L$ \emph{with respect to} $B$ 
is defined as the closure of 
$\bigcup_{r\in \mathbb{N}^*} \Delta_r/r$ in 
$\mathfrak{X}(T) \otimes \mathbb{R}$.
\end{defn} 

Even though it is not clear from the definition, $\Delta_L$ is 
a polytope. More precisely, we may recall the explicit relation 
between $\Delta_L$ and the polytope associated to a Cartier divisor 
whose associated line bundle is $L$. Fix a linearization of $L$, 
and choose a global $B$-semi-invariant section $s$ of $L$,
so that the zero divisor $d$ of $s$ is an ample $B$-invariant Cartier divisor. 
Let $\mu_s$ be the character of $B$ defined by $s$, that is, such that 
$b\cdot s(b^{-1} \cdot x) = \mu_s(b) s(x)$ for all $x\in X$.

\begin{prop}{(\cite[Proposition 3.3]{Bri89}, see also \cite[Section 5.3]{Bria})}          
\label{prop_mom_pol}
The moment polytope $\Delta_L$ of $L$ and the polytope $\Delta_d$ associated to $d$ 
are related by $\Delta_L = \mu_s+\Delta_d$.  
\end{prop}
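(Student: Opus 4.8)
The plan is to reduce the computation of the moment polytope to Brion's combinatorial description of $B$-semi-invariant global sections, using the distinguished section $s$ to pass from sections of $L^r$ to rational functions. Since $\Delta_L$ only records the highest weights occurring in the multiplicity-free modules $H^0(X,L^r)$, I would first translate everything into statements about weights of $B$-semi-invariant sections, and then match the regularity condition on the associated rational functions with the defining inequalities of $r\Delta_d$.

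First I would record that, because $H^0(X,L^r)=\bigoplus_{\lambda\in\Delta_r}V_\lambda$ is multiplicity free, each summand contributes a single line of $U$-invariant vectors of weight $\lambda$; conversely a $B$-semi-invariant section is $U$-invariant, as its weight factors through $B/U=T$, hence is a highest weight vector. Thus $\Delta_r$ is exactly the set of weights of $B$-semi-invariant global sections of $L^r$. Next, using the $B$-semi-invariant section $s$ with $\mathrm{div}(s)=d$ and weight $\chi(s)$, I would write an arbitrary $B$-semi-invariant section $t$ of $L^r$ as $t=f\,s^r$ with $f\in\mathbb{C}(X)$ a $B$-semi-invariant rational function. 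Regularity of $t$ is then equivalent to $\mathrm{div}(f)+rd\geq 0$, and the weight of $t$ equals the weight of $f$ plus $r\chi(s)$ (using $\chi(s^r)=r\chi(s)$). So, up to the translation by $r\chi(s)$, the problem becomes the description of the weights $m$ of $B$-semi-invariant $f$ satisfying $\mathrm{div}(f)+rd\geq 0$.

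The technical heart is to identify this set of weights with $r\Delta_d\cap\mathcal{M}$, and this is where I expect the main work. For a semi-invariant $f$ of weight $m$ one has $\mathrm{div}(f)=\sum_{Y}\langle u_Y,m\rangle\,Y+\sum_{D}\langle\rho(D),m\rangle\,\overline{D}$, so $\mathrm{div}(f)+rd\geq 0$ decouples into one inequality per $G$-stable divisor $Y$, per color in $\mathcal{D}_X$, and per color outside $\mathcal{D}_X$. On the $G$-stable rays and the colors of $\mathcal{D}_X$, expanding $rd$ through $l_{rd}=r\,l_d$ turns these into $m(x)\geq -r\,l_d(x)$ on $|\mathcal{F}_X|$, which by the relation $l_d(x)=v_{\Delta_d}(-x)$ of Section~\ref{sec_pol_car} is exactly the condition $m\in r(-m_\sigma+\sigma^\vee)$ defining the fan part of $r\Delta_d$. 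On the colors outside $\mathcal{D}_X$ the remaining inequalities involve the coefficients $n_D$; here the ampleness hypothesis, through the strict inequality $n_D>l_d(\rho(D))$ of Proposition~\ref{prop_ample}, is what ensures these constraints are non-binding and are compatible with the strict color conditions $m(\rho_D)>n_D$ in the definition of $\Delta_d$. Getting every sign right and handling this strict-versus-closed distinction is the delicate bookkeeping, and ampleness is exactly what makes it clean; completeness of $X$, which holds since $L$ is ample, is used so that $l_d$ is determined by the $m_\sigma$ and Proposition~\ref{prop_ample} applies.

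Finally I would assemble the pieces. The previous steps give $\Delta_r=r\chi(s)+\big(r\Delta_d\cap\mathcal{M}\big)$, hence $\Delta_r/r=\chi(s)+\tfrac1r\big(r\Delta_d\cap\mathcal{M}\big)$. After checking the homogeneity $\Delta_{rd}=r\Delta_d$ that legitimizes passing to powers, I would take the union over $r$ and then the closure: the rational points $\tfrac1r(r\Delta_d\cap\mathcal{M})$ are dense in the rational polytope $\Delta_d$, the strict color inequalities disappearing under closure, so $\overline{\bigcup_r\Delta_r/r}=\chi(s)+\Delta_d$. This is precisely the asserted equality $\Delta_L=\chi(s)+\Delta_d$.
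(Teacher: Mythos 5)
The paper does not actually prove this proposition: it is imported from Brion (\cite[Proposition 3.3]{Bri89}), so there is no internal argument to compare yours against. Your proposal is a correct reconstruction of the standard proof, and it is essentially Brion's: use multiplicity-freeness and complete reducibility to identify $\Delta_r$ with the set of $B$-weights of $B$-eigenvectors of $H^0(X,L^r)$; divide by $s^r$ (legitimate since $X$ is normal) to reduce to $B$-eigenfunctions $f_m$ with $\mathrm{div}(f_m)+rd\geq 0$; translate effectivity into one linear inequality per $G$-stable divisor and per color; and pass to the limit in $r$ using density of rational points in the rational polytope $\Delta_d$. Two small corrections to the bookkeeping. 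First, the inequalities at the colors outside $\mathcal{D}_X$, namely $\langle\rho(D),m\rangle+rn_D\geq 0$, are not constraints that ampleness renders ``non-binding'': after dividing by $r$ they \emph{are} the color conditions in the definition of $\Delta_d$, once one observes that the inequality $m(\rho_D)>n_D$ as printed in the paper should read $m(\rho_D)\geq -n_D$, consistently with \cite{Bri89}. The strict inequality $n_D>l_d(\rho(D))$ of Proposition~\ref{prop_ample} characterizes ampleness but plays no role in matching the two polytopes; ampleness enters only through the existence of the global section $s$ and the standing completeness hypothesis. Second, on the fan part you should not invoke the identity $l_d(x)=v_{\Delta_d}(-x)$ from Section~\ref{sec_pol_car}, since that identity is itself a consequence of the description of $\Delta_d$ you are in the middle of establishing; the equivalence between ``$\langle x,m\rangle\geq -r\,l_d(x)$ for all $x\in|\mathcal{F}_X|$'' and ``$m\in r(-m_\sigma+\sigma^\vee)$ for every maximal $\sigma$'' follows directly from $l_d|_\sigma=m_\sigma$ together with linearity on each cone and the fact that the extremal rays of the colored cones are generated by the $u_Y$ and the $\rho(D)$, $D\in\mathcal{D}_X$. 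With these adjustments the argument is complete and agrees with the cited source.
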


\subsubsection{Anticanonical line bundle}
\label{sec_GH}

Let us now recall some results from \cite{GH15}. 
In this article, Gagliardi and Hofscheier study the anticanonical line bundle
on a spherical variety, in particular on $\mathbb{Q}$-Fano spherical varieties. 
It is based on the work of Brion \cite{Bri97}, and the analogue for $\mathbb{Q}$-Fano 
horospherical varieties by Pasquier \cite{Pas08}.
We consider the anticanonical divisor on a $\mathbb{Q}$-Fano spherical variety $X$.
It is clear that the discussion of Cartier divisors and moment polytopes above 
extends to $\mathbb{Q}$-Cartier divisors and linearized $\mathbb{Q}$-line bundles.
Let $K_X^{-1}$ denote the (naturally linearized) $\mathbb{Q}$-line bundle on $X$.

Let $P$ be the stabilizer of the open orbit of $B$ in $X$.
There exists a $B$-semi-invariant section of $K_X^{-1}$ with weight $2\rho_P$ 
and divisor 
\[
d=\sum_{Y\in \mathcal{I}_X^G} Y + \sum_{D\in \mathcal{D}} n_D \overline{D}
\]
where the $n_D$ are explicitly obtained in terms of $2\rho_P$ and the 
types of the roots (see \cite{GH15} for a precise description of these 
coefficients). 

The moment polytope $\Delta^+$ is then $2\rho_P+\Delta_d$ 
by Proposition~\ref{prop_mom_pol}, and furthermore 
the dual polytope $\Delta_d^*$ of $\Delta_d$ is a 
$\mathbb{Q}$-$G/H$-reflexive polytope in the sense of \cite{GH15}, 
which can be obtained as the convex hull:
\[
\Delta_d^*=\mathrm{conv}(\{\rho_D/n_D, D\in\mathcal{D} \} \cup \{u_Y,Y\in \mathcal{I}_X^G \}).
\]

The $\mathbb{Q}$-Fano variety $X$ can further be recovered from its 
$\mathbb{Q}$-$G/H$-reflexive polytope $\Delta_d^*$ by the following procedure, 
detailed in \cite{GH15}.
The colored fan of $X$ is obtained from $\Delta_d$ as the union of 
the colored cones $(\mathrm{Cone(F)},\rho^{-1}(F))$ for all faces $F$ of $\Delta_d^*$
such that the intersection of the relative interior of $\mathrm{Cone}(F)$ with 
the valuation cone $\mathcal{V}$ is not empty.

\subsection{Equivariant degenerations of spherical spaces}
\label{sec_elem_emb}

\subsubsection{Adapted parabolic and Levi subgroups}

Let $X$ be a spherical variety under the action of $G$ and $B$
a Borel subgroup of $G$. The stabilizer in $G$ of the open orbit 
of $B$ is a parabolic subgroup of $G$ containing $B$ called the 
\emph{adapted parabolic}.

\begin{defn}
Let $H$ be a spherical subgroup of $G$.
An \emph{elementary embedding} of $G/H$ is a spherical embedding $(E,x)$ 
of $G/H$ such that the boundary $E_0=E\setminus (G/H)$ is a single 
codimension one $G$-orbit (necessarily closed).
\end{defn}

The colored fan of an elementary embedding is a single ray in the 
valuation cone, with no colors \cite[2.2]{BP87}. 

Choose $B$ a Borel subgroup of $G$ such that $BH$ is open in $G$.
The adapted parabolic $P$ is also the stabilizer in $G$ of $BH$. 

\begin{defn}
A Levi subgroup $L$ of $P$
is called 
\emph{adapted to} $H$ 
if the following conditions hold:
\begin{itemize}
\item $P\cap H=L\cap H$,
\item $L\cap H$ contains the derived subgroup $[L,L]$,
\item for any elementary embedding $(E,x)$ of $G/H$ with closed orbit $E_0$,
the closure $\overline{C\cdot x}$ of the orbit of $x$ under the action of the 
connected center $C$ of $L$ meets that orbit of $B$ which is open in $E_0$.
\end{itemize}
\end{defn}

The choice of an adapted Levi subgroup $L$ together with a maximal torus 
$T\subset L$ allows us to identify $\mathcal{M}$ 
with an explicit subgroup of $\mathfrak{X}(T)$
 \cite[2.9]{BP87}.
More precisely, the group $\mathcal{M}$ is identified with $\mathfrak{X}(T/T\cap H)$,
the group $\mathcal{N}$ is identified with the group 
$\mathfrak{Y}(T/T\cap H)$ of one parameter subgroups of $T/T\cap H$,  
and so $\mathcal{V}$ is identified with a cone in the $\mathbb{Q}$-vector space 
$\mathfrak{Y}(T/T\cap H) \otimes \mathbb{Q}$.
Denote by $\pi$ the quotient map 
$\mathfrak{Y}(T) \otimes \mathbb{Q} \longrightarrow 
\mathfrak{Y}(T/T\cap H) \otimes \mathbb{Q}$.

\begin{defn}
Let $(E,x)$ be an elementary embedding of $G/H$, and $\mathcal{C}_E$ be the ray 
in $\mathcal{V}$ associated to $E$. We say that a one parameter subgroup 
$\lambda \in \mathfrak{Y}(T)$ is \emph{adapted to $E$} if it projects to an 
element of $\mathcal{C}_E \cap \mathfrak{Y}(T/T\cap H)$ under the 
quotient map $\pi$.
\end{defn}

\begin{prop}{\cite[2.10]{BP87}}
Let $(E,x)$ be an elementary embedding of $G/H$, and $\lambda \in \mathfrak{Y}(T)$
adapted to $E$, then $\lim_{z\rightarrow 0} \lambda(z)\cdot x$ exists and is a 
point in the open orbit of $B$ in $E_0$.
\end{prop}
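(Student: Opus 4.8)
The plan is to reduce the statement to a computation in toric geometry on the slice $\overline{C\cdot x}$, where $C$ is the connected center of an adapted Levi subgroup $L$, exploiting the third defining property of adaptedness.

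First I would reduce from $\lambda\in\mathfrak{Y}(T)$ to a one parameter subgroup valued in $C$. Since $[L,L]\subset L\cap H=P\cap H$ and $T\subset L$, the quotient map $\mathfrak{Y}(C)\otimes\mathbb{Q}\to\mathfrak{Y}(T/T\cap H)\otimes\mathbb{Q}=\mathcal{N}\otimes\mathbb{Q}$ is surjective. Hence there are an integer $N\geq 1$ and $\mu\in\mathfrak{Y}(C)$ with $\pi(\mu)=N\pi(\lambda)$, so that $\mu\cdot\lambda^{-N}$ lies in $\ker\pi\cap\mathfrak{Y}(T)=\mathfrak{Y}(T\cap H)$ (after possibly raising to a further power to absorb the component group of $T\cap H$). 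As $T$ is abelian and $T\cap H\subset H$, this gives $\mu(z)\cdot x=\lambda^N(z)\cdot x$ for all $z$, and reparametrising $z\mapsto z^N$ shows that $\lim_{z\to 0}\lambda(z)\cdot x$ exists and equals $\lim_{z\to 0}\mu(z)\cdot x$ whenever the latter does. It therefore suffices to treat $\mu\in\mathfrak{Y}(C)$ with $\pi(\mu)$ in the relative interior of the ray $\mathcal{C}_E$.

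Next I would pass to the toric slice $Z:=\overline{C\cdot x}\subset E$. The orbit $C\cdot x$ is the open orbit of the torus $S:=C/(C\cap H)$ acting on $Z$, and the inclusion $C\hookrightarrow T$ induces an isogeny $S\to T/T\cap H$, hence an identification $\mathfrak{Y}(S)\otimes\mathbb{Q}\cong\mathcal{N}\otimes\mathbb{Q}$. Under this identification I claim the fan $\Sigma_Z$ of the toric variety $Z$ has the ray $\mathcal{C}_E$ as a maximal cone, with associated toric boundary divisor $Z\cap E_0$: the third adaptedness condition guarantees that $\overline{C\cdot x}$ meets the open $B$-orbit of $E_0$, so $Z\cap E_0$ is a nonempty $S$-stable divisor lying in that open $B$-orbit, and comparing orders of vanishing shows that the restriction of the $G$-invariant valuation $\mathrm{ord}_{E_0}$ to $C$-eigenfunctions generates the ray of $Z\cap E_0$ in $\mathfrak{Y}(S)\otimes\mathbb{Q}$, namely the generator of $\mathcal{C}_E$.

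Finally I would invoke the standard principle for limits of one parameter subgroups on a toric variety: $\lim_{z\to 0}\mu(z)\cdot x$ exists if and only if $\pi(\mu)$ lies in the support $|\Sigma_Z|$, and then equals the distinguished point of the cone whose relative interior contains $\pi(\mu)$. Since $\pi(\mu)$ lies in the relative interior of the ray $\mathcal{C}_E$, the limit exists and is the distinguished point of $\mathcal{C}_E$, which lies in the open $S$-orbit of $Z\cap E_0$ and hence in the open $B$-orbit of $E_0$. The main obstacle is the claim of the previous step: identifying $\Sigma_Z$ precisely and, in particular, verifying that the toric boundary orbit lands in the \emph{open} $B$-orbit of $E_0$ rather than in a smaller $B$-orbit. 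This is exactly the geometric content encoded in the definition of an adapted Levi subgroup, and disentangling the interaction between the $C$-action on the slice and the full $B$-orbit structure of the codimension-one orbit $E_0$ is where the real work lies.
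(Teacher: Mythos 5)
The paper does not actually prove this proposition: it is quoted verbatim from Brion--Pauer, \cite[2.10]{BP87}, and used as a black box, so there is no in-paper argument to compare yours against. Judged on its own terms, your strategy (replace $\lambda$ by a one-parameter subgroup of the connected center $C$ of an adapted Levi, then compute the limit on the slice $\overline{C\cdot x}$ viewed as a torus embedding) is the right one and is close in spirit to how Brion and Pauer themselves proceed, via the local structure theorem. Your first reduction is complete and correct: surjectivity of $\mathfrak{Y}(C)\otimes\mathbb{Q}\to\mathcal{N}\otimes\mathbb{Q}$ follows from $T\subset L$ and $[L,L]\subset L\cap H$, and replacing $\lambda^N$ by $\mu$ with $\mu\lambda^{-N}\in\mathfrak{Y}(T\cap H)$ changes nothing about the orbit of $x$ or the existence and value of the limit.

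The gap is the one you yourself flag, and it is not a technicality: the claim that $\mathcal{C}_E$ is a cone of the fan of $Z=\overline{C\cdot x}$ whose associated boundary orbit lies in the \emph{open} $B$-orbit of $E_0$ carries essentially the full content of the proposition, so as written your argument reduces the statement to another statement of the same depth. Concretely, three things are asserted without proof: (i) that $Z\cap E_0$ is a divisor of $Z$ contained in the open $B$-orbit of $E_0$ (the adaptedness condition only says $Z$ \emph{meets} that orbit; ruling out components of $Z\cap E_0$ in smaller $B$-orbits requires the local structure theorem, i.e.\ the isomorphism of a $B$-stable open neighborhood of $E_0$'s open $B$-orbit with $P^u\times(\text{toric slice})$); (ii) that the restriction of $\mathrm{ord}_{E_0}$ to $C$-eigenfunctions is the generator of $\mathcal{C}_E$ in $\mathfrak{Y}(S)\otimes\mathbb{Q}$ under your identification, which is the order-of-vanishing computation you only gesture at; and (iii) the toric dictionary for limits is applied to $Z$, which need not be normal (one must pass to the normalization, and also account for possible extra rays of the fan coming from $\overline{C\cdot x}\cap(G/H)$ if $C\cdot x$ is not closed in the open $G$-orbit -- harmless for the limit along the relative interior of $\mathcal{C}_E$, but it should be said). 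Until (i) and (ii) are established, the proof is a correct reduction rather than a proof; supplying them amounts to reproving the relevant parts of \cite[Sections 1--2]{BP87}.
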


\subsubsection{Choice of an adapted Levi subgroup}

We will need to use some properties of the adapted Levi subgroups, essentially
proved in \cite{BLV86,BP87}. However, since they are sometimes proved for an 
adapted Levi subgroup of a particular form, we need to show that we can choose
a good Levi subgroup, up to changing the base point. Let us first prove the 
following elementary fact.

\begin{prop}
\label{prop_conj}
Let $L$ be a Levi subgroup adapted to $H$, and $u\in P^u$ the unipotent 
radical of $P$, then $uLu^{-1}$ 
is a Levi subgroup of $P$ adapted to $uHu^{-1}$.
\end{prop}

Remark that all Levi subgroups of $P$ are conjugate under an element of 
$P^u$. Remark also that if $H$ is the stabilizer of $x$, then 
$uHu^{-1}$ is the stabilizer of the point $u\cdot x$, which is still in 
the open $B$-orbit by definition of $P$. 

\begin{proof}
We can first see that
\[
P\cap uHu^{-1} = u(P\cap H)u^{-1}=u(L\cap H)u^{-1}=uLu^{-1}\cap uHu^{-1},
\]
and
\[
[uLu^{-1},uLu^{-1}]=u[L,L]u^{-1}\subset uHu^{-1}.
\] 

Assume now that $(E,x)$ is an elementary embedding of $G/uHu^{-1}$ with closed
orbit $E_0$. Then $(E,u^{-1}\cdot x)$ is an elementary embedding of $G/H$ with closed
orbit $E_0$. Since $L$ is adapted to $H$, if $C$ is the connected center of $L$,
$\overline{C\cdot u^{-1}\cdot x}$ meets the open orbit of $B$ in $E_0$.
The connected center of $uLu^{-1}$ is $uCu^{-1}$, and we get 
$\overline{uCu^{-1} \cdot x}=u\cdot \overline{C\cdot u^{-1}\cdot x}$.
This meets the open orbit of $B$ in $E_0$ since $u\in P^u\subset U$, so 
$uLu^{-1}$ is adapted to $uHu^{-1}$.
\end{proof}

\begin{prop}
\label{prop_levi_choice}
Let $T$ be a maximal torus of $B$.
Then up to changing the base point in the open orbit of $B$ and thus its stabilizer $H$, 
we can choose an adapted Levi subgroup $L$ containing the torus $T$ and such that 
$N_G(H)=H(C\cap N_G(H))$.
\end{prop}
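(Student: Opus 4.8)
The plan is to obtain the inclusion $T\subseteq L$ by a single conjugation, and then to analyse $N_G(H)$ by separating its central-torus directions from its unipotent directions. For the first requirement, recall from \cite{BP87} that there is at least one Levi subgroup $L_1$ of $P$ adapted to $H$. Since any two Levi subgroups of the parabolic $P$ are conjugate by a unique element of its unipotent radical $P^u$, I would pick $u\in P^u$ with $uL_1u^{-1}=L$, where $L$ is the Levi subgroup of $P$ containing the fixed torus $T$. By Proposition~\ref{prop_conj}, $L$ is then adapted to $uHu^{-1}$, which is the stabilizer of $u\cdot x$; as $u\in P^u\subseteq U$, this point still lies in the open $B$-orbit. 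Renaming $u\cdot x$ as the base point and $uHu^{-1}$ as $H$, we may assume $T\subseteq L$, hence $C=Z(L)^{\circ}\subseteq T$. This is the single change of base point in the statement, and it settles the first condition.

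For the normalizer condition, I would first locate $N_G(H)$ inside $P$. Recall from Section~\ref{sec_equi_aut} that $N_G(H)/H$ acts on $G/H$ by $gH\mapsto gn^{-1}H$ and, commuting with $G$, preserves the unique open $B$-orbit $BH/H$. For $n\in N_G(H)$ this already forces $n^{-1}\in BH$, and since $N_G(H)$ is a group we get $N_G(H)\subseteq BH$. Because $P$ stabilizes $BH$ and contains $B$, one has $PH=BH$, so $N_G(H)\subseteq PH$; writing $n=ph$ and noting $p=nh^{-1}\in N_G(H)$, this gives
\[
N_G(H)=(P\cap N_G(H))\,H.
\]

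Next I would peel off the semisimple part. Given $p\in P\cap N_G(H)$, write $p=\ell v$ with $\ell\in L$, $v\in P^u$, and decompose $\ell=cs$ with $c\in C$ and $s\in[L,L]$, using $L=C\cdot[L,L]$. The defining properties of an adapted Levi give $[L,L]\subseteq L\cap H\subseteq H$, so $s\in H$; as $c$ is central in $L$ it commutes with $s$, whence $p=s(cv)$ with $cv\in CP^u\cap N_G(H)$. Combining this with the previous display, and using that elements of $N_G(H)$ conjugate $H$ to itself, I obtain
\[
N_G(H)=H\,\bigl(N_G(H)\cap CP^u\bigr).
\]

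The main obstacle is the remaining inclusion $N_G(H)\cap CP^u\subseteq H\,(C\cap N_G(H))$. Writing an element as $cv$ with $c\in C$, $v\in P^u$, the point is to show that its central part $c$ already normalizes $H$ and that its unipotent part $v$ can be absorbed into $H$: concretely, it suffices to prove $N_G(H)\cap P^u\subseteq H$ together with the separation $N_G(H)\cap CP^u=(C\cap N_G(H))(P^u\cap N_G(H))$, after which $cv\in (C\cap N_G(H))H=H(C\cap N_G(H))$ follows by conjugating. I expect the central (and finite) contribution to be handled by the base-point choice of the first step, which is what makes representatives of $N_G(H)/H$ available inside the torus $C$; the genuine difficulty is the unipotent bookkeeping $N_G(H)\cap P^u\subseteq H$, which I would derive from the explicit description of $N_G(H)$ in \cite{BP87} together with the third defining condition of an adapted Levi, namely that $\overline{C\cdot x}$ meets the open $B$-orbit of the closed orbit of every elementary embedding. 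Since the conjugating element $u$ of the first step is \emph{forced} once we require $uL_1u^{-1}=L\supseteq T$, no further freedom in the base point is available, so this separation must hold automatically for that choice, and verifying it is the crux of the proof.
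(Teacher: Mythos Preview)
Your conjugation step to arrange $T\subseteq L$ is correct and matches the paper. The reductions $N_G(H)=(P\cap N_G(H))H$ and then $N_G(H)=H\,(N_G(H)\cap CP^u)$ are also fine. The gap is exactly where you say it is: you never prove the ``unipotent bookkeeping'' $N_G(H)\cap P^u\subseteq H$, and the hand-wave towards ``the explicit description of $N_G(H)$ in \cite{BP87}'' together with the third adaptedness condition is not an argument. Worse, your assertion that ``this separation must hold automatically for that choice'' is unjustified: the definition of an adapted Levi only gives $P\cap H=L\cap H$, not the analogous statement for $N_G(H)$, and there is no reason an \emph{arbitrary} adapted Levi (even after the forced conjugation) should satisfy it.

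The paper closes this gap by a different mechanism that you miss entirely. It does not start from an arbitrary adapted Levi but from the specific one $\tilde{L}=\mathrm{Stab}_G(df_e)$ constructed in \cite{BLV86}. The point, drawn from \cite[Section~5]{BP87}, is that $B\tilde{H}=BN_G(\tilde{H})$, so $\tilde{L}$ is adapted \emph{also to} $N_G(\tilde{H})$; in particular $P\cap N_G(\tilde{H})=\tilde{L}\cap N_G(\tilde{H})$. This property survives the conjugation by $u\in P^u$, giving $P\cap N_G(H)=L\cap N_G(H)$. Since $L\cap P^u=\{e\}$, one gets the much stronger conclusion $P^u\cap N_G(H)=\{e\}$, from which $CH=CN_G(H)$ and hence $N_G(H)=H(C\cap N_G(H))$ follow by comparing the two product decompositions $P^u\times CH\to PH=PN_G(H)\leftarrow P^u\times CN_G(H)$. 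So the missing idea is not a computation inside $CP^u$ but the choice of the \emph{right} adapted Levi, one that is simultaneously adapted to $H$ and to $N_G(H)$.
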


\begin{proof}
First choose any base point $\tilde{x}$ in the open orbit of $B$, 
and let $\tilde{H}$ be its stabilizer.
Let $\tilde{L}=\mathrm{Stab}_G(df_e)$ where $f\in \mathbb{C}[G]$ is a regular function
on $G$ which vanishes everywhere on $G\setminus BH$ and 
$df_e$ is the differential of $f$ at the 
neutral element $e$, considered as an element of the coadjoint representation.
Let also $\tilde{C}$ denote the connected center of $\tilde{L}$.
Then $\tilde{L}$ is adapted to $\tilde{H}$ \cite[Section 3]{BLV86}. 

Furthermore, it is shown in \cite[Section 5]{BP87} that $B\tilde{H}=BN_G(\tilde{H})$,  
so $N_G(\tilde{H})$ is spherical
and $P$ and $\tilde{L}$ are also adapted to $N_G(\tilde{H})$. In particular, we have 
$P\cap N_G(\tilde{H}) = \tilde{L}\cap N_G(\tilde{H})$.

The Levi subgroup $\tilde{L}$ might not contain the maximal torus $T$, but there is a 
conjugate $L$ under an element $u$ of $P^u$ that does contain $T$. 
By Proposition~\ref{prop_conj}, $L=u\tilde{L}u^{-1}$ is adapted to the 
subgroup $H=u\tilde{H}u^{-1}$.

Furthermore, we have 
$P\cap N_G(H) = L \cap N_G(H)$.
From this we deduce that the inclusion of $CH$ in $CN_G(H)$ is an equality.
Indeed, $P^u\cap N_G(H) \subset P\cap N_G(H) = L \cap N_G(H)$, and $L\cap P^u=\{e\}$, 
so the following surjective maps are isomorphisms. 
\[
P^u\times CN_G(H)\longrightarrow PN_G(H)=BH=PH \longleftarrow P^u \times CH.
\] 
These isomorphisms imply that $CH=CN_G(H)$, and 
this equality implies the last conclusion: $N_G(H)=H(C\cap N_G(H))$.
\end{proof}

\subsubsection{Equivariant degenerations}

We fix now a spherical homogeneous space $O$ under the action of $G$, 
a Borel subgroup $B$ of $G$ and a maximal torus $T$ in $B$.
Using Proposition~\ref{prop_levi_choice}, we choose a base point $x\in O$ 
with isotropy group $H$ so that there exists a Levi subgroup $L$ of $P$ 
adapted to $H$ and containing the fixed maximal torus $T$ of $B$. 
Note that $G/H\times \mathbb{C}^*$ is a 
spherical homogeneous space under the action of $G\times \mathbb{C}^*$.

We use the work of Brion and Pauer on isotropy subgroups of elementary embeddings 
\cite[section 3]{BP87} to obtain information about the equivariant 
degenerations of a spherical homogeneous space.
In this article, equivariant degenerations of spherical homogeneous 
spaces are defined as follows.

\begin{defn}
An \emph{equivariant degeneration} of $G/H$ is an elementary embedding 
$(E,\tilde{x})$ of $G/H\times \mathbb{C}^*$, equipped with 
a surjective $G\times \mathbb{C}^*$-equivariant morphism 
$p:E\longrightarrow \mathbb{C}$, 
where $G$ acts trivially on $\mathbb{C}$, and $\mathbb{C}^*$
acts by multiplication on $\mathbb{C}$, 
with $p^{-1}(0)=E_0$ the closed orbit of $E$. 
\end{defn}

Let us remark that $P\times \mathbb{C}^*$ is an adapted parabolic for 
$(G\times \mathbb{C}^*)/(H\times \{1\})$, and $L\times \mathbb{C}^*$ 
is an adapted Levi subgroup with connected center $C\times \mathbb{C}^*$. 
We identify $\mathfrak{Y}(T\times \mathbb{C}^*)$ with $\mathfrak{Y}(T)\oplus \mathbb{Z}$.
Furthermore, the valuation cone of $(G\times \mathbb{C}^*)/(H\times \{1\})$ can be 
identified with $\mathcal{V}\times \mathbb{Q}\subset 
(\mathcal{N}\otimes \mathbb{Q})\oplus \mathbb{Q}$.

\begin{prop}
\label{prop_act}
Let $(E,\tilde{x})$ be an equivariant degeneration of $G/H$.
Let $(\lambda,m)\in \mathfrak{Y}(T)\oplus \mathbb{Z}$ be a one parameter subgroup 
adapted to $E$.
Then $m>0$, the action of $G$ on $E_0$ is transitive, and if $H_0$ denotes the isotropy 
subgroup in $G$ of $\tilde{x}_0= \lim_{z\rightarrow 0} (\lambda(z),z^m)\cdot \tilde{x}$, 
then the action of $e^{\tau}\in \mathbb{C}^*$ on $G/H_0$ is given by 
multiplication on the right by $\lambda(e^{-\tau/m})$.
\end{prop}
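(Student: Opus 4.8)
The plan is to prove the three assertions in order, establishing the last two simultaneously.

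First I would establish $m>0$ using the morphism $p:E\to\mathbb{C}$. Write $\tilde{x}=(eH,1)$ in the open orbit $G/H\times\mathbb{C}^*$ and set $c=p(\tilde{x})$. Since $G$ acts trivially on $\mathbb{C}$ while $\mathbb{C}^*$ acts by multiplication, the map $p$ is $G$-invariant and $\mathbb{C}^*$-equivariant, so
\[
p\big((\lambda(z),z^m)\cdot\tilde{x}\big)=z^m\,p(\tilde{x})=c\,z^m.
\]
Moreover $c\neq 0$, because the open orbit is disjoint from $E_0=p^{-1}(0)$. By \cite[2.10]{BP87}, applied to the elementary embedding $(E,\tilde{x})$ of $G/H\times\mathbb{C}^*$ and the adapted one parameter subgroup $(\lambda,m)$, the limit $\tilde{x}_0=\lim_{z\to0}(\lambda(z),z^m)\cdot\tilde{x}$ exists and lies in the open orbit of $B\times\mathbb{C}^*$ in $E_0$. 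Continuity of $p$ then forces $p(\tilde{x}_0)=\lim_{z\to0}c\,z^m=0$, which is possible only if $m>0$.

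Next I would compute the $\mathbb{C}^*$-action on $\tilde{x}_0$ by a reparametrization of the defining limit. For $w=e^\tau\in\mathbb{C}^*$, acting through $(e,w)\in G\times\mathbb{C}^*$, one has $(e,w)\cdot\tilde{x}_0=\lim_{z\to0}(\lambda(z),w z^m)\cdot\tilde{x}$. Choosing $s=e^{\tau/m}$ so that $s^m=w$ and substituting $u=sz$ (which still tends to $0$), the homomorphism property of $\lambda$ gives $\lambda(z)=\lambda(u)\lambda(s^{-1})$ and $wz^m=u^m$; since $\lambda(u)$ and $\lambda(s^{-1})$ lie in the torus $T$ they commute, so $(\lambda(z),wz^m)=(\lambda(s^{-1}),1)\,(\lambda(u),u^m)$. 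As $(\lambda(s^{-1}),1)$ is a fixed group element, its action commutes with the limit, and therefore
\[
(e,w)\cdot\tilde{x}_0=(\lambda(s^{-1}),1)\cdot\lim_{u\to0}(\lambda(u),u^m)\cdot\tilde{x}=\lambda(e^{-\tau/m})\cdot\tilde{x}_0,
\]
the last action being left multiplication by $\lambda(e^{-\tau/m})\in G$.

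Finally I would extract the two remaining assertions. The identity above shows $\mathbb{C}^*\cdot\tilde{x}_0\subset G\cdot\tilde{x}_0$, whence $E_0=(G\times\mathbb{C}^*)\cdot\tilde{x}_0=G\cdot\tilde{x}_0$ and the $G$-action on $E_0$ is transitive; setting $H_0=\mathrm{Stab}_G(\tilde{x}_0)$ identifies $E_0$ with $G/H_0$ via $\tilde{x}_0\leftrightarrow eH_0$. Since the $\mathbb{C}^*$-action commutes with the left $G$-action, for $g\in G$ one gets $e^\tau\cdot(gH_0)=g\cdot(e^\tau\cdot\tilde{x}_0)=g\,\lambda(e^{-\tau/m})\,H_0$, which is exactly multiplication on the right by $\lambda(e^{-\tau/m})$; well-definedness of this map forces $\lambda(e^{-\tau/m})\in N_G(H_0)$. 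The \emph{main obstacle} is the middle step: carrying out the reparametrization $u=sz$ cleanly, justifying the interchange of the fixed-element action with the limit, and recognizing the conceptual point that the formula, which at the base point looks like left multiplication by $\lambda(e^{-\tau/m})$, is globally the $G$-equivariant automorphism given by right multiplication. A consistency check falls out by taking $w=1$: for each $m$-th root of unity $\zeta=e^{-2\pi ik/m}$ one obtains $\lambda(\zeta)\cdot\tilde{x}_0=\tilde{x}_0$, so $\lambda(\zeta)\in H_0$ and the formula defines a genuine action of $\mathbb{C}^*$, independent of the chosen $m$-th root $s$.
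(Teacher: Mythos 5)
Your proof is correct and follows essentially the same route as the paper: the reparametrization $u=sz$ in the defining limit is exactly the mechanism behind the paper's observation that $(\lambda(z),z^m)$ lies in the $G\times\mathbb{C}^*$-stabilizer of $\tilde{x}_0$, after which both arguments reduce the transitivity and the description of the $\mathbb{C}^*$-action to the same coset manipulation. Your version merely spells out the justification of the step the paper labels ``obvious'' and adds the (correct) consistency checks that $\lambda$ sends $m$-th roots of unity into $H_0$ and lands in $N_G(H_0)$.
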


\begin{proof}
Let $p : E\longrightarrow \mathbb{C}$ denote the $G\times \mathbb{C}^*$-equivariant 
morphism associated to the degeneration. 
Then since $\tilde{x}_0 \in E_0 = p^{-1}(0)$, we have $m>0$.
Denote by $\tilde{H}_0$ the isotropy subgroup of $\tilde{x}_0$ in $G\times \mathbb{C}^*$. 
Obviously, $(\lambda(z),z^m) \in \tilde{H}_0$ for all $z\in \mathbb{C}^*$.

Let us first show that $G$ acts transitively on $(G\times \mathbb{C}^*)/\tilde{H}_0$.
Consider $(g_1,z_1)$ and $(g_2,z_2)$ in $G\times \mathbb{C}^*$. Let $s\in \mathbb{C}^*$
be such that $s^m=z_2/z_1$. Then 
\[
(g_1,z_1)=(g_1,z_2/s^m)=(g_1\lambda(s),z_2)(\lambda(1/s),1/s^m),
\] 
so since $(\lambda(1/s),1/s^m)\in \tilde{H}_0$, 
\[
(g_1,z_1)\tilde{H}_0=(g_1\lambda(s)g_2^{-1},1)(g_2,z_2)\tilde{H}_0,
\] 
which shows the transitivity of the action of $G$. 
In particular if $H_0$ is the isotropy group of $\tilde{x}_0$ in $G$, we can identify $E_0$ with $G/H_0$.

The action of $\mathbb{C}^*$ is obtained similarly. Given $z=e^{\tau}\in \mathbb{C}^*$, 
we have 
\begin{align*}
(e,z)(g,1)\cdot \tilde{x}_0 & = (e,z)(g,1)\tilde{H}_0 \\
          & = (g,e^{\tau})\tilde{H}_0 \\
          & = (g\lambda(e^{-\tau/m}),1)(\lambda(e^{\tau/m}),e^{\tau})\tilde{H}_0 \\
          & = (g\lambda(e^{-\tau/m}),1)\tilde{H}_0 \\
          & = (g\lambda(e^{-\tau/m}),1)\cdot \tilde{x}_0.     
\end{align*} 
This finishes the proof of the proposition.
\end{proof}

Conversely, any elementary embedding whose ray is generated by 
some $(\lambda,m)\in \mathfrak{Y}(T/T\cap H)\otimes \mathbb{Q} \times \mathbb{Q}$ 
with $m>0$ 
provides an equivariant degeneration of $G/H$ thanks to 
Theorem~\ref{thm_morphisms}.

\begin{prop}
\label{prop_still_adapted}
Keeping the same notations as in Proposition~\ref{prop_act}, we have:
$BH_0$ is open in $G$ (in particular $H_0$ is spherical), 
$P=\mathrm{Stab}_G(BH_0)$, and $L$ is adapted to $H_0$.
\end{prop}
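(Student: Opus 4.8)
The plan is to study the isotropy group $H_0=\mathrm{Stab}_G(\tilde{x}_0)$ of the limit point $\tilde{x}_0=\lim_{z\to 0}\lambda(z)\cdot x$ through the limiting subalgebra $\mathfrak{h}_0=\lim_{z\to 0}\mathrm{Ad}(\lambda(z))\mathfrak{h}$, which exists since $\tilde{x}_0$ does and has the same dimension as $\mathfrak{h}$. First I would pass to a convenient lift: modifying $\lambda$ by an element $\mu\in\mathfrak{Y}(T\cap H)$ changes neither the path $z\mapsto\lambda(z)\cdot x$ (as $\mu(z)\in H$ fixes $x$) nor the subspace $\mathrm{Ad}(\lambda(z))\mathfrak{h}$ (as $T\cap H$ normalises $\mathfrak{h}$), so $\mathfrak{h}_0$ is lift-independent; moreover $C$ maps onto the quotient torus $T/(T\cap H)$, so one may assume $\lambda\in\mathfrak{Y}(C)$. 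Then $\mathrm{Ad}(\lambda(z))$ fixes $\mathfrak{l}$ pointwise and preserves the root space decompositions of $\mathfrak{p}^u=\bigoplus_{\alpha\in\Phi_P^+}\mathfrak{g}_\alpha$ and of $\mathfrak{q}=\mathrm{Lie}(Q)=\mathfrak{l}\oplus\bigoplus_{\alpha\in\Phi_P^+}\mathfrak{g}_{-\alpha}$, so that $\mathfrak{h}_0$ is graded for this action.

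For the openness of $BH_0$ I would invoke \cite[2.10]{BP87}: since $(\lambda,m)$ is adapted to the elementary embedding $E$, the limit $\tilde{x}_0$ lies in the open $B\times\mathbb{C}^*$-orbit of $E_0$. Because $\lambda(\mathbb{C}^*)\subset T\subset B$, the $\mathbb{C}^*$-action of Proposition~\ref{prop_act} is absorbed into left multiplication by $B$, so this open orbit is exactly $B\cdot\tilde{x}_0=BH_0/H_0$; hence $BH_0$ is open and $H_0$ is spherical. Condition (ii) for adaptedness is then immediate: since $\lambda$ is central in $L$, any $g\in L\cap H$ satisfies $g\tilde{x}_0=\lim_{z\to 0}\lambda(z)gx=\tilde{x}_0$, so $[L,L]\subset L\cap H\subset L\cap H_0$.

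The heart of the argument is to show $\mathfrak{h}_0\subset\mathfrak{q}$, equivalently $\mathfrak{h}_0\cap\mathfrak{p}^u=0$. Granting this, an element of $\mathfrak{h}_0\cap\mathfrak{p}$ has trivial $\mathfrak{p}^u$-component and so lies in $\mathfrak{l}$, yielding condition (i) in the form $\mathfrak{p}\cap\mathfrak{h}_0=\mathfrak{l}\cap\mathfrak{h}_0$. To establish the inclusion I would use that an elementary embedding has no colors, hence $E$ is toroidal, and apply the local structure theorem: a $P$-stable affine open of $E$ is isomorphic to $P^u\times Z$, with $Z$ an affine toric variety under $C$ on which $[L,L]$ acts trivially and which meets every $G$-orbit. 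The point $\tilde{x}_0$ corresponds to the $C$-fixed point of $Z$, the product structure forces its stabiliser to avoid $P^u$, and the same local model identifies the adapted parabolic of the closed orbit $E_0$ with that of the open orbit, namely $P$, giving $P=\mathrm{Stab}_G(BH_0)$. The slice $Z$ simultaneously provides condition (iii): every elementary embedding of $G/H_0$ degenerates the $C$-orbit of $\tilde{x}_0$ inside a toric variety, whose orbit closure always contains the fixed point lying in the open $B$-orbit of the closed orbit.

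I expect the main obstacle to be precisely the inclusion $\mathfrak{h}_0\subset\mathfrak{q}$. The delicate point is that adaptedness constrains $\lambda$ only through its class $\pi(\lambda)$ in the valuation cone, i.e. only modulo $\mathfrak{Y}(T\cap H)$, whereas the behaviour of $\mathrm{Ad}(\lambda(z))$ on $\mathfrak{p}^u$ a priori depends on the chosen lift; what rescues the argument is exactly the lift-independence of $\mathfrak{h}_0$ noted above, which lets one read the containment off the toroidal model of $E$, where the contraction of the open $B$-orbit onto the closed orbit along $\lambda$ is manifestly directed into $\mathfrak{q}$. The remaining verifications — that $P$ and $L$ are genuinely unchanged rather than merely contained in the new adapted data — are then routine, using $\dim H_0=\dim H$ together with $L\cap H\subset L\cap H_0$.
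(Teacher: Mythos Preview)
Your approach is genuinely different from the paper's, and parts of it are sound, but there is a real error at what you call ``the heart of the argument''. You claim that $\mathfrak{h}_0\subset\mathfrak{q}$ is \emph{equivalent} to $\mathfrak{h}_0\cap\mathfrak{p}^u=0$. This is false: the first implies the second (since $\mathfrak{q}\cap\mathfrak{p}^u=0$), but not conversely. Worse, $\mathfrak{h}_0\subset\mathfrak{q}$ is simply not true in general. Take $\lambda$ in the linear part of the valuation cone so that $H_0=H$; in the group case $G\times G/\mathrm{diag}(G)$ one has $\mathfrak{h}=\{(X,X)\}$, $\mathfrak{q}=\mathfrak{b}^-\times\mathfrak{b}$, and clearly $\mathfrak{h}\not\subset\mathfrak{q}$. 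What condition~(i) actually needs is only $P^u\cap H_0=\{e\}$, and your local-structure argument does give this (in $P^u\times Z$ the factor $P^u$ acts freely), so the error is in the framing rather than the mechanism. Relatedly, your assertion that ``$\tilde{x}_0$ corresponds to the $C$-fixed point of $Z$'' is also wrong: $Z$ has dimension $\mathrm{rk}(G/H)+1$, so $Z\cap E_0$ is a codimension-one toric subvariety, not a point. You only need $\tilde{x}_0\in\{e\}\times Z$, which does follow once $\lambda$ is taken in $\mathfrak{Y}(C)$ and $\tilde{x}\in Z$.

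The paper's route avoids all of this by working one level up: it invokes \cite[Th\'eor\`eme~3.6]{BP87}, which already says that for the elementary $G\times\mathbb{C}^*$-embedding $E$ the adapted data $(P\times\mathbb{C}^*,L\times\mathbb{C}^*)$ persist to the closed orbit, and then descends each of the three conditions from $G\times\mathbb{C}^*$ to $G$ by the explicit formula $\tilde{H}_0=\bigcup_\tau H_0\lambda(e^{\tau/m})\times\{e^\tau\}$. In particular, condition~(iii) for $L$ is obtained by regarding an arbitrary elementary $G$-embedding of $G/H_0$ as a $G\times\mathbb{C}^*$-embedding, using adaptedness of $L\times\mathbb{C}^*$ to produce a limit point in the open $B\times\mathbb{C}^*$-orbit, and then absorbing the $\mathbb{C}^*$-factor back into $T\subset B$. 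Your sketch of (iii) via the local structure theorem could be made to work once $P=\mathrm{Stab}_G(BH_0)$ is established, but it is considerably more delicate than the paper's two-line reduction, and as written it leans on the false containment $\mathfrak{h}_0\subset\mathfrak{q}$.
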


\begin{proof}
By \cite[Théorème 3.6]{BP87}, $(B\times \mathbb{C}^*)\tilde{H}_0$ is open in $G\times \mathbb{C}^*$,
$P\times \mathbb{C}^* = \mathrm{Stab}_{G\times \mathbb{C}^*}((B\times \mathbb{C}^*)\tilde{H}_0)$, 
and $L\times \mathbb{C}^*$ is adapted to $\tilde{H}_0$, 
where $\tilde{H}_0$ denotes the isotropy group of $\tilde{x}_0$ in $G\times \mathbb{C}^*$ as in the proof 
of Proposition~\ref{prop_act}.

First remark that for any $b \in B$ and $z\in \mathbb{C}^*$, we have 
$(b,z^m)\cdot \tilde{x}_0 = (b\lambda(1/z),1) \cdot \tilde{x}_0$.
Since $\lambda$ is a one parameter subgroup of $T \subset B$,
we obtain that $B\times \mathbb{C}^* \cdot \tilde{x}_0= B\times \{1\} \cdot \tilde{x}_0$, 
thus the orbit of $\tilde{x}_0 = eH_0$ in $G/H_0$ is open, which implies that 
$BH_0$ is open in $G$.

Now let us show that $\mathrm{Stab}_G(BH_0) = P$.  
We have proved above that $B\cdot \tilde{x}_0 = (B\times \mathbb{C}^*)\cdot \tilde{x}_0$, 
and furthermore we have $\{e\} \times \mathbb{C}^*\subset 
\mathrm{Stab}_{G\times \mathbb{C}^*}((B\times \mathbb{C}^*)\cdot \tilde{x}_0)$, so it follows that 
\begin{align*}
P\times \mathbb{C}^* & = \mathrm{Stab}_{G\times \mathbb{C}^*}((B\times \mathbb{C}^*)\cdot \tilde{x}_0) \\ 
	 & = \mathrm{Stab}_{G}((B\times \mathbb{C}^*)\cdot \tilde{x}_0) \times \mathbb{C}^* \\
	 & = \mathrm{Stab}_{G}(B\cdot \tilde{x}_0) \times \mathbb{C}^* 
\end{align*}
Hence $P=\mathrm{Stab}_G(B\cdot \tilde{x}_0)=\mathrm{Stab}_G(BH_0)$.

Finally we have to show that $L$ is adapted to $H_0$.

First let us describe the subgroup $\tilde{H}_0$ more explicitly in terms of $H_0$, 
$\lambda$ and $m$. By the description of the action of $G\times \mathbb{C}^*$ on 
$G/H_0$ we easily check that 
\[
\tilde{H}_0 = 
\bigcup_{\tau \in \mathbb{C}} H_0\lambda(e^{\tau/m})\times \{e^{\tau}\}.
\]
Since $L\times \mathbb{C}^*$ is adapted to $\tilde{H}_0$ we obtain 
\begin{align*}
(P\cap H_0) \times \{1\} & = ((P\times \mathbb{C}^*) \cap \tilde{H}_0) \cap (G\times \{1\}) \\  
& = ((L\times \mathbb{C}^*) \cap \tilde{H}_0) \cap (G\times \{1\}) \\  
& = (L\cap H_0) \times \{1\}
\end{align*}
hence $P\cap H_0 = L\cap H_0$.
Similarly, $[L,L]\subset H_0$ follows from:
\[
[L,L]\times\{1\} = [L\times \mathbb{C}^*,L\times \mathbb{C}^*] 
\subset  \tilde{H}_0\cap (G\times \{1\}) = H_0\times \{1\}.
\]

Consider $(Z,z)$ an elementary $G$-embedding of $G/H_0=(G\times \mathbb{C}^*) / \tilde{H}_0$, 
with closed orbit $Z_0$. 
Since the action of $\mathbb{C}^*$ commutes with the action of $G$ on $G/H_0$, 
this is also an elementary embedding for the action of $G\times \mathbb{C}^*$. 
Since $L\times \mathbb{C}^*$ is adapted to $(G\times \mathbb{C}^*) / \tilde{H}_0$, 
there exists a one parameter subgroup $t\longmapsto (\mu(t),t^k)$ of $T\times \mathbb{C}^*$
such that $z_0 := \lim_{t\rightarrow 0} (\mu(t),t^k)\cdot z$ is in the open 
$B\times \mathbb{C}^*$-orbit in $Z_0$. This $z_0$ obviously lies in 
the closure of $C\cdot z$ in $Z$, since 
$(\mu(t),t^k)\cdot z = (\mu(t)\lambda(s^k),1) \cdot z$ if $s^m=t$.
As previously, $(b,t) \in B \times \mathbb{C}^*$ acts on $z_0$ as 
$(b,t)\cdot z_0 = (b,t)(\mu(1/s),1/t)\cdot z_0 = (b\mu(1/s),1)\cdot z_0$
where $s^k=t$, so $B\cdot z_0= (B\times \mathbb{C}^*) \cdot z_0$ is open in 
$Z_0$. We have thus shown that $L$ is adapted to $H_0$.
\end{proof}

\subsubsection{Elementary embeddings and equivariant automorphisms}

Let $(E,\tilde{x})$ be an equivariant degeneration of $G/H$,
and $(\lambda,m) \in \mathfrak{Y}(T) \times \mathbb{Z}$ a one parameter subgroup
adapted to $E$. Let $\tilde{x}_0 = \lim_{z\rightarrow 0} (\lambda(z),z^m)\cdot \tilde{x}$
and denote by $H_0$ the isotropy subgroup of $\tilde{x}_0$ in $G$.

\begin{prop}
\label{prop_incl_norm}
We have $T\cap H \subset T\cap H_0$ and $T\cap N_G(H) \subset T\cap N_G(H_0)$.
\end{prop}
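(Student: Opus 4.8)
The plan is to handle the two inclusions separately: the first is a direct continuity-and-commutation computation, while the second rests on extending an equivariant automorphism from the open orbit to $E$ and then reading off its action on the closed orbit.

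For $T\cap H\subset T\cap H_0$, I would take $t\in T\cap H$ and exploit two facts: $(t,1)$ lies in the isotropy group $H\times\{1\}$ of $\tilde{x}$, and $t$ commutes with $\lambda(z)$ since both lie in the torus $T$. Then, by continuity of the $(G\times\mathbb{C}^*)$-action,
\[
(t,1)\cdot x_0 = \lim_{z\to 0}(t,1)(\lambda(z),z^m)\cdot\tilde{x} = \lim_{z\to 0}(\lambda(z),z^m)(t,1)\cdot\tilde{x} = \lim_{z\to 0}(\lambda(z),z^m)\cdot\tilde{x} = x_0,
\]
so $t$ fixes $x_0$ and hence $t\in T\cap H_0$.

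For $T\cap N_G(H)\subset T\cap N_G(H_0)$, the idea is to promote the normalizing element $t$ to an automorphism. Given $t\in T\cap N_G(H)$, right multiplication by $t^{-1}$ descends to a well-defined $G$-equivariant automorphism $\sigma_t\colon gH\mapsto gt^{-1}H$ of $G/H$ (well-definedness uses $t\in N_G(H)$). As $\sigma_t$ ignores the $\mathbb{C}^*$-factor, $\sigma_t\times\mathrm{id}$ is a $(G\times\mathbb{C}^*)$-equivariant automorphism of the open orbit $G/H\times\mathbb{C}^*$. By the discussion in Section~\ref{sec_equi_aut}, equivariant automorphisms of the open orbit of a spherical variety extend uniquely to the whole variety, so $\sigma_t\times\mathrm{id}$ extends to $\Sigma_t\in\mathrm{Aut}_{G\times\mathbb{C}^*}(E)$; since $\Sigma_t$ permutes the (only two) orbits and fixes the open one setwise, it preserves the closed orbit $E_0$.

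To conclude I would compute $\Sigma_t(x_0)$ in two ways. Using continuity, equivariance, and again $t\lambda(z)=\lambda(z)t$,
\[
\Sigma_t(x_0) = \lim_{z\to 0}(\lambda(z),z^m)\cdot\Sigma_t(\tilde{x}) = \lim_{z\to 0}(\lambda(z),z^m)(t^{-1},1)\cdot\tilde{x} = (t^{-1},1)\cdot x_0 = t^{-1}H_0.
\]
On the other hand, by Proposition~\ref{prop_act} the closed orbit is $E_0=G/H_0$, so $\Sigma_t|_{E_0}$ is a $G$-equivariant automorphism of $G/H_0$, hence right multiplication by some $n^{-1}$ with $n\in N_G(H_0)$ (by \cite[Proposition 1.2]{Tim11}), giving $\Sigma_t(x_0)=n^{-1}H_0$. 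Comparing the two, $t^{-1}H_0=n^{-1}H_0$, so $tn^{-1}\in H_0\subset N_G(H_0)$ and therefore $t\in T\cap N_G(H_0)$. The hard part will be the two structural inputs rather than the algebra: justifying that $\sigma_t\times\mathrm{id}$ actually extends to $E$ and that its restriction to $E_0$ is of the expected right-multiplication form. This is exactly where Section~\ref{sec_equi_aut} (the identifications $\mathrm{Aut}_G(X)\cong\mathrm{Aut}_G(O)$ and $\mathrm{Aut}_G(G/H_0)\cong N_G(H_0)/H_0$) carries the weight, and one must keep careful track of left versus right multiplication and of the commutation $t\lambda(z)=\lambda(z)t$ throughout.
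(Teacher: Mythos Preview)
Your proposal is correct and follows essentially the same approach as the paper: the first inclusion is identical, and for the second both arguments extend the $N_G(H)/H$-action to $E$, compute that the extended automorphism sends $x_0$ to $t^{-1}\cdot x_0$, and then deduce $t\in N_G(H_0)$. The only cosmetic difference is in the last step: the paper compares $G$-isotropy subgroups of the two descriptions of this point (the image under a $G$-equivariant map has isotropy $H_0$, while $t^{-1}\cdot x_0$ has isotropy $t^{-1}H_0t$), obtaining $tH_0t^{-1}=H_0$ directly, whereas you invoke the classification $\mathrm{Aut}_G(G/H_0)\cong N_G(H_0)/H_0$ to write $\Sigma_t|_{E_0}$ as right multiplication by a normalizer element.
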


\begin{proof}
If $t\in T\cap H$ then 
\begin{align*}
t\cdot \tilde{x}_0 & = \lim_{z\rightarrow 0} (t,1) \cdot (\lambda(z),z^m)\cdot \tilde{x} \\
	& = \lim_{z\rightarrow 0} (\lambda(z),z^m)\cdot (t,1) \cdot \tilde{x} \\
	& = \tilde{x}_0.
\end{align*}

Recall from Section~\ref{sec_equi_aut} that the action of 
\[
N_G(H)/H = \mathrm{Aut}_G(G/H) \subset 
\mathrm{Aut}_{G\times \mathbb{C}^*}(G/H \times \mathbb{C}^*)
\] 
extends to the elementary embedding $E$. It is \emph{a priori} no longer 
explicit on $G/H_0$. We denote by 
$y \curvearrowleft nH$ the action of $nH \in N_G(H)/H$
on $y\in E$. 
We have $(n,1)\cdot \tilde{x} \curvearrowleft nH = \tilde{x}$.

Assume that $t \in T\cap N_G(H)$.
We then have 
\begin{align*}
(t,1)\cdot \tilde{x}_0 & = \lim_{z\rightarrow 0} (t,1)\cdot (\lambda(z),z^m)\cdot \tilde{x} \\
	& = \lim_{z\rightarrow 0} (\lambda(z),z^m)\cdot (t,1)\cdot \tilde{x} \\
	& = \lim_{z\rightarrow 0} (\lambda(z),z^m) \tilde{x} \curvearrowleft t^{-1}H \\
	& = \tilde{x}_0 \curvearrowleft t^{-1}H.
\end{align*} 
Since the action of $N_G(H)/H$ commutes with the action of $G$, 
the isotropy group of $\tilde{x}_0 \curvearrowleft t^{-1}H$ in $G$ is the same 
as the isotropy group of $\tilde{x}_0$, which is $H_0$. 
On the other hand, the isotropy subgroup of $(t,1)\cdot \tilde{x}_0$ in $G$ is 
$tH_0t^{-1}$, so we obtain that $H_0=tH_0t^{-1}$. In other words, $t\in N_G(H_0)$.
\end{proof}

Let us also highlight the relation between the linear part of 
the valuation cone, equivariant automorphisms, and 
equivariant degenerations of $G/H$ whose closed $G\times \mathbb{C}^*$-orbit 
$G/H_0$ is 
isomorphic to $G/H$.
We assume here that $H$ and $L$ are as in 
Proposition~\ref{prop_levi_choice}. 

Assume $(\lambda,m) \in 
\mathfrak{Y}(T\cap N_G(H))\cap \pi^{-1}(\mathcal{V}) \oplus \mathbb{N}^*$.
In this case, the equivariant degeneration of $G/H$ associated to the ray 
generated by $(\lambda,m)$ may be described explicitly 
\cite[Section 2.8]{BP87}. 
This is the quotient of $G/H\times \mathbb{C}^* \times \mathbb{C}$ 
by the action of $\mathbb{C}^*$ given by 
$t\cdot (gH,z, \theta) = (g\lambda(1/t)H,z/t^m,t\theta)$.
The open $G\times \mathbb{C}^*$-orbit is the image of the 
$G\times \mathbb{C}^* \times \mathbb{C}^*$-orbit of $(eH,1,1)$, isomorphic 
to $G/H\times \mathbb{C}^*$, and the closed $G\times \mathbb{C}^*$-orbit is the image 
of the $G\times \mathbb{C}^* \times \mathbb{C}^*$-orbit of $(eH,1,0)$, 
whose stabilizer in $G\times \mathbb{C}^*$ is 
$(\lambda(z),z^m)(H\times \{1\})$, and stabilizer in $G$ 
is $H$. 
In particular, the closed orbit is in this case isomorphic to $G/H$.

Remark that the construction above may be carried out whenever 
$\lambda \in \mathfrak{Y}(T\cap N_G(H))$ and $m\in \mathbb{N}^*$, 
which shows that $\mathfrak{Y}(T\cap N_G(H)) \otimes \mathbb{Q}$ 
is a $\mathbb{Q}$-vector space contained in $\pi^{-1}(\mathcal{V})$.
In fact, it is the maximal such vector space, also called the linear 
part of $\pi^{-1}(\mathcal{V})$, by \cite[Proposition 5.3]{BP87}. 
Let us give a statement summarizing this paragraph for future reference.

\begin{prop}
\label{prop_prod_deg}
Let $\lambda \in \mathfrak{Y}(T)\cap \pi^{-1}(\mathcal{V})$, 
and $m\in \mathbb{N}^*$. 
Then the following are equivalent:
\begin{itemize}
\item $-\lambda \in \mathfrak{Y}(T)\cap \pi^{-1}(\mathcal{V})$,
which means that $\lambda$ is in the linear part of $\pi^{-1}(\mathcal{V})$,
\item $\lambda \in \mathfrak{Y}(T\cap N_G(H))$,
\item the equivariant degeneration associated to the ray generated by 
$(\lambda,m)$ has closed orbit isomorphic to $G/H$. 
\end{itemize}
\end{prop}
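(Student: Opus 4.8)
The plan is to treat the three bullets using the explicit material assembled just before the statement, and to isolate the one genuinely new step. The equivalence of the first two bullets is essentially formal: by the remark preceding the proposition together with \cite[Proposition 5.3]{BP87}, the linear part of $\pi^{-1}(\mathcal{V})$ is exactly $\mathfrak{Y}(T\cap N_G(H))\otimes\mathbb{Q}$. Since we are given $\lambda\in\pi^{-1}(\mathcal{V})$, the condition $-\lambda\in\pi^{-1}(\mathcal{V})$ says precisely that $\lambda$ lies in this linear part, i.e. $\lambda\in\mathfrak{Y}(T\cap N_G(H))\otimes\mathbb{Q}$. As $(T\cap N_G(H))^{\circ}$ is a subtorus of $T$, its one parameter subgroups form a direct factor (a saturated sublattice) of $\mathfrak{Y}(T)$; hence an \emph{integral} $\lambda$ lying in $\mathfrak{Y}(T\cap N_G(H))\otimes\mathbb{Q}$ already lies in $\mathfrak{Y}(T\cap N_G(H))$. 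This yields the equivalence of the first and second bullets. The implication from the second bullet to the third is also immediate: when $\lambda\in\mathfrak{Y}(T\cap N_G(H))$, the construction of \cite[Section 2.8]{BP87} recalled above produces the equivariant degeneration of $G/H$ attached to the ray through $(\lambda,m)$ and exhibits its closed orbit as $G/H$ itself.

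The substance of the proposition is therefore the implication from the third bullet to the second, and this is where I expect the main difficulty. Assume the degeneration attached to $(\lambda,m)$ has closed orbit $G/H_0$ with $G/H_0\cong G/H$ as $G$-varieties. By Proposition~\ref{prop_act} the induced $\mathbb{C}^*$-action on $G/H_0$ is $gH_0\mapsto g\lambda(e^{-\tau/m})H_0$; since this action is by $G$-equivariant automorphisms, which form $N_G(H_0)/H_0$ acting by right translation, the element $\lambda(s)$ must normalize $H_0$ for every $s$. Thus $\lambda\in\mathfrak{Y}(T\cap N_G(H_0))$, equivalently $\lambda\in\mathfrak{Y}(A_0)$ where $A_0=(T\cap N_G(H_0))^{\circ}$. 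The obstacle is that, a priori, $N_G(H_0)$ is larger than $N_G(H)$: Proposition~\ref{prop_incl_norm} only gives the inclusion $T\cap N_G(H)\subseteq T\cap N_G(H_0)$, i.e. $A\subseteq A_0$ with $A=(T\cap N_G(H))^{\circ}$, which is the wrong direction to conclude $\lambda\in\mathfrak{Y}(T\cap N_G(H))$.

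To close the gap I would compare the dimensions of the two linear parts. By Proposition~\ref{prop_still_adapted} the Levi subgroup $L$ is adapted to $H_0$ as well, so \cite[Proposition 5.3]{BP87} applies to both $H$ and $H_0$. Writing $B_H=(T\cap H)^{\circ}$ and $B_{H_0}=(T\cap H_0)^{\circ}$ and projecting by $\pi$, respectively by the analogous map $\pi_0$ for $G/H_0$ (whose kernels are $\mathfrak{Y}(B_H)\otimes\mathbb{Q}$, respectively $\mathfrak{Y}(B_{H_0})\otimes\mathbb{Q}$), the dimension of the linear part of $\mathcal{V}$ equals $\dim A-\dim B_H$ and that of the valuation cone $\mathcal{V}_0$ of $G/H_0$ equals $\dim A_0-\dim B_{H_0}$. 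Now $G/H_0\cong G/H$ forces these linear parts to have equal dimension, and it also forces equal rank, $\dim(T/T\cap H)=\dim(T/T\cap H_0)$, whence $\dim B_H=\dim B_{H_0}$; combined with $B_H\subseteq B_{H_0}$ from Proposition~\ref{prop_incl_norm} this gives $B_H=B_{H_0}$. Subtracting, $\dim A=\dim A_0$, and together with $A\subseteq A_0$ this yields $A=A_0$. Therefore $\lambda\in\mathfrak{Y}(A_0)=\mathfrak{Y}(A)=\mathfrak{Y}(T\cap N_G(H))$, which is the second bullet and completes the equivalences.

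The one point to verify with care is that the intrinsic invariants used above, namely the rank of the spherical homogeneous space and the dimension of the linear part of its valuation cone, really are preserved under the abstract $G$-isomorphism $G/H_0\cong G/H$; both are defined independently of the choice of base point and Borel subgroup, so this should be routine, after which the dimension bookkeeping closes the argument.
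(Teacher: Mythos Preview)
Your treatment of the equivalence (1)$\Leftrightarrow$(2) and of the implication (2)$\Rightarrow$(3) is exactly the paper's: the paragraph preceding the proposition invokes \cite[Proposition 5.3]{BP87} for the first and the explicit construction from \cite[Section 2.8]{BP87} for the second, and then states the proposition ``summarizing this paragraph for future reference''. Your extra remark on integrality (that an integral $\lambda$ lying in $\mathfrak{Y}(T\cap N_G(H))\otimes\mathbb{Q}$ already lies in $\mathfrak{Y}(T\cap N_G(H))$ by saturation) is a harmless clarification.

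Where you genuinely add something is (3)$\Rightarrow$(2). The paper does \emph{not} argue this implication in the paragraph preceding the proposition; it is left implicit. Later, in the proof of Theorem~\ref{main_thm}, the paper handles the analogous statement for $X_0'\cong X$ by the terse sentence ``the action induced by $\xi$ on $X$ is $G$-equivariant, so $\xi$ projects to an element of the linear part of the valuation cone'', which is essentially your first step ($\lambda$ normalizes $H_0$ because the $\mathbb{C}^*$-action it generates is $G$-equivariant) together with an implicit identification of $N_G(H_0)/H_0$ with $N_G(H)/H$. Your dimension count, using that $L$ is adapted to both $H$ and $H_0$ (Proposition~\ref{prop_still_adapted}), that rank and the dimension of the linear part of the valuation cone are intrinsic, and the inclusions from Proposition~\ref{prop_incl_norm}, makes this identification honest: it shows $(T\cap N_G(H))^\circ=(T\cap N_G(H_0))^\circ$ rather than merely that the two groups are abstractly isomorphic. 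This is a correct argument and a genuine sharpening of what the paper writes; the paper buys brevity, you buy rigor on a point the paper glosses over.
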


Recall that $G/H$ is horospherical if and only if the valuation cone $\mathcal{V}$
is the full vector space $\mathcal{N}\otimes \mathbb{Q}$. We then have in particular:

\begin{cor}
If $G/H$ is horospherical then all equivariant degenerations of $G/H$ 
have closed orbit isomorphic to $G/H$.
\end{cor}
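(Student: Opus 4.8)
The plan is to deduce this directly from the characterization of horospherical spaces by their valuation cone (Proposition~\ref{prop_char_horo}) together with the equivalences of Proposition~\ref{prop_prod_deg}. First I would recall that since $G/H$ is horospherical, Proposition~\ref{prop_char_horo} gives $\mathcal{V}=\mathcal{N}\otimes\mathbb{Q}$; in particular the valuation cone coincides with its own linear part, so \emph{every} element of $\mathcal{N}\otimes\mathbb{Q}$ lies in the linear part of $\mathcal{V}$, and correspondingly $\pi^{-1}(\mathcal{V})=\mathfrak{Y}(T)\otimes\mathbb{Q}$ is stable under $\lambda\mapsto-\lambda$.

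Next I would take an arbitrary equivariant degeneration $(E,\tilde{x})$ of $G/H$ and a one parameter subgroup $(\lambda,m)\in\mathfrak{Y}(T)\oplus\mathbb{Z}$ adapted to $E$, which exists since the defining ray $\mathcal{C}_E$ is rational; rescaling an integral generator we may assume $m\in\mathbb{N}^*$ (that $m>0$ is guaranteed by Proposition~\ref{prop_act}). The point to verify is that $\lambda$ satisfies the hypothesis of Proposition~\ref{prop_prod_deg}, namely $\lambda\in\mathfrak{Y}(T)\cap\pi^{-1}(\mathcal{V})$. This is immediate: the ray of $E$ lies inside the valuation cone $\mathcal{V}\times\mathbb{Q}$ of $G\times\mathbb{C}^*/H\times\{1\}$, so adaptedness forces the projection of $(\lambda,m)$ to land in $\mathcal{C}_E\subset\mathcal{V}\times\mathbb{Q}$, whence its first component $\pi(\lambda)$ lies in $\mathcal{V}$.

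Finally, since $\mathcal{V}$ is the whole space, $-\pi(\lambda)\in\mathcal{V}$ as well, so $-\lambda\in\pi^{-1}(\mathcal{V})$ and $\lambda$ lies in the linear part of $\mathcal{V}$. The first and third bullet points of Proposition~\ref{prop_prod_deg} are then equivalent, and the third says exactly that the closed orbit of the degeneration associated to the ray generated by $(\lambda,m)$ is isomorphic to $G/H$. As $(E,\tilde{x})$ was arbitrary, this yields the corollary. I do not expect any genuine obstacle here: essentially all the content has already been packaged into Propositions~\ref{prop_char_horo} and~\ref{prop_prod_deg}, and the only mild care needed is the bookkeeping that an adapted $(\lambda,m)$ really has $m>0$ and $\pi(\lambda)\in\mathcal{V}$, which is precisely what the adaptedness condition and Proposition~\ref{prop_act} supply.
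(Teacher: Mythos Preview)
Your proposal is correct and follows exactly the approach the paper intends: the corollary is stated immediately after Proposition~\ref{prop_prod_deg} with the remark ``Recall the characterization of horospherical varieties in terms of their valuation cones,'' and your argument simply unpacks this, combining Proposition~\ref{prop_char_horo} (so that $\mathcal{V}$ equals its linear part) with Proposition~\ref{prop_prod_deg}. The extra bookkeeping you include (that an adapted $(\lambda,m)$ has $m>0$ and $\pi(\lambda)\in\mathcal{V}$) is accurate and harmless.
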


\subsubsection{Horospherical degenerations}

Let $Q$ denote the parabolic subgroup of $G$ opposite to $P$ with respect to the 
Levi subgroup $L$ containing the maximal torus $T$, and $Q^u$ its unipotent radical.

\begin{prop}
\label{prop_iso_horo}
We still use the notations from Proposition~\ref{prop_act}.
Assume that $\pi(\lambda)$ is in the interior of the valuation cone, then 
$H_0$ is horospherical for $G$, and $H_0= Q^u(L\cap H_0)$.
\end{prop}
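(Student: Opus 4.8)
The plan is to reduce both assertions to the single containment $Q^u \subset H_0$, and then read off the conclusions from the structural results already in place. Recall from Proposition~\ref{prop_still_adapted} that $L$ is adapted to $H_0$, so that $P \cap H_0 = L \cap H_0$ and $[L,L] \subset L \cap H_0$; in particular all root subgroups $U_{\pm\gamma}$ with $\gamma \in \Phi_L$ lie in $H_0$. The opposite unipotent radical has Lie algebra $\mathfrak{q}^u = \bigoplus_{\alpha \in \Phi_P^+} \mathfrak{g}_{-\alpha}$, and $(B\cap L)Q^u$ is a Borel subgroup of $Q$ (hence of $G$) whose unipotent radical $U_Q$ has roots $\Phi_L^+ \cup (-\Phi_P^+)$. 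Once $Q^u \subset H_0$ is known, the inclusions $U_Q \subset Q^u\,(B\cap L)^u \subset Q^u\,[L,L] \subset H_0$ exhibit a maximal unipotent subgroup of $G$ inside $H_0$, which is exactly the condition that $H_0$ be horospherical.

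To obtain $Q^u \subset H_0$ I would argue at the level of the isotropy of the limit point. An element $g \in G$ fixes $x_0$ as soon as $\mathrm{Ad}(\lambda(z)^{-1})g \to e$ when $z \to 0$, since then $g\lambda(z)\cdot x = \lambda(z)\bigl(\lambda(z)^{-1}g\lambda(z)\bigr)\cdot x$ converges to the same limit as $\lambda(z)\cdot x$. For a root subgroup $U_\beta = \exp(\mathbb{C}e_\beta)$ one has $\mathrm{Ad}(\lambda(z)^{-1})e_\beta = z^{-\left<\beta,\lambda\right>}e_\beta$, so $U_\beta \subset H_0$ whenever $\left<\beta,\lambda\right> < 0$. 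Applying this to $\beta = -\alpha$ with $\alpha \in \Phi_P^+$, I need $\left<\alpha,\lambda\right> > 0$ for every $\alpha \in \Phi_P^+$. Since $\mathfrak{q}^u$ is generated as a Lie algebra by the spaces $\mathfrak{g}_{-\alpha_i}$ attached to the simple roots $\alpha_i \notin \Phi_L$ together with $[\mathfrak{l},\mathfrak{l}] \subset \mathfrak{h}_0$, it is enough to treat these finitely many simple roots; the remaining $U_{-\alpha}$ are then produced inside $H_0$ by bracketing with $[L,L]$.

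The positivity $\left<\alpha,\lambda\right> > 0$ for the roots of $P^u$ is the crucial point, and it is where I would invoke the analysis of elementary embeddings in \cite[Section~3]{BP87}: the ray $\mathcal{C}_E$ generated by $\pi(\lambda)$ meets the interior of $\mathcal{V}$, and interior elements of the valuation cone pair strictly positively with the roots of the adapted parabolic $P$ (one already checks this in the rank-one model $\mathrm{SL}_2/T$, where $\mathcal{V}$ is the half-line pairing positively with the unique positive root and the limit is $\mathrm{SL}_2/B^-$). Running the same sign computation in the other direction shows that the $P^u$-directions are contracted to zero rather than created in the limit, so that the limiting isotropy algebra satisfies $\mathfrak{h}_0 \subset \mathfrak{q} = \mathfrak{l}\oplus\mathfrak{q}^u$, i.e. $H_0 \subset Q = Q^u \rtimes L$. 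Writing any $h \in H_0$ as $h = u\ell$ with $u \in Q^u \subset H_0$ and $\ell \in L$ then forces $\ell = u^{-1}h \in L \cap H_0$, which gives $H_0 = Q^u(L\cap H_0)$; the reverse inclusion is immediate.

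The main obstacle is precisely this sign statement together with the exact determination of $\mathfrak{h}_0$: one must show both that interiority of $\lambda$ forces $\left<\alpha,\lambda\right>>0$ for all $\alpha\in\Phi_P^+$ and that no spurious $\mathfrak{p}^u$-directions survive in the limit. The delicate feature is that $\left<\alpha,\lambda\right>$ depends a priori on the chosen lift $\lambda$ of $\pi(\lambda)$ and not only on $\pi(\lambda)\in\mathcal{V}$, so the argument cannot be organised purely through individual pairings; it must instead be phrased in terms of the limit of the subgroups $\lambda(z)H\lambda(z)^{-1}$, which is exactly the Brion--Pauer description that the preceding propositions of this section have been set up to exploit.
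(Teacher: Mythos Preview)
Your overall architecture is sound: reduce everything to $Q^u \subset H_0$ and $H_0 \subset Q$, then use Proposition~\ref{prop_still_adapted} for the Levi part. But the mechanism you propose for producing elements of $H_0$ is wrong. You claim that $g \in G$ fixes $x_0$ as soon as $\lambda(z)^{-1}g\lambda(z) \to e$, because $\lambda(z)\bigl(\lambda(z)^{-1}g\lambda(z)\bigr)\cdot x$ then ``converges to the same limit'' as $\lambda(z)\cdot x$. This is false: the left-hand side equals $g\cdot\lambda(z)\cdot x$, which tends to $g\cdot x_0$, and there is no reason this should equal $x_0$. Concretely, take $G=\mathrm{SL}_3$ acting on $\mathbb{P}^2$, $\lambda(z)=\mathrm{diag}(z^{-1},1,z)$, $x=[(0,1,1)]$, so $x_0=[(0,1,0)]$; then $g=I+E_{12}$ satisfies $\lambda(z)^{-1}g\lambda(z)=I+zE_{12}\to e$, yet $g\cdot x_0=[(1,1,0)]\neq x_0$. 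The Bia{\l}ynicki--Birula retraction $x\mapsto\lim\lambda(z)\cdot x$ is not continuous, which is exactly what your argument would need. The obstacles you flag later (lift-dependence of $\langle\alpha,\lambda\rangle$, identification of $\mathfrak{h}_0$) are real too, but this earlier step already fails.

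The paper's proof avoids all of this by quoting \cite[Proposition~3.10]{BP87} directly for the $G\times\mathbb{C}^*$-action: that result already gives $\tilde{H}_0=(Q^u\times\{1\})\bigl((L\times\mathbb{C}^*)\cap\tilde{H}_0\bigr)$ and that $\tilde{H}_0$ contains a maximal unipotent subgroup. One then simply intersects both sides with $G\times\{1\}$, using $H_0\times\{1\}=\tilde{H}_0\cap(G\times\{1\})$, to get $H_0=Q^u(L\cap H_0)$ and horosphericity. The substantive work---tracking the limit of the conjugates $\lambda(z)H\lambda(z)^{-1}$ rather than individual elements, and showing that interiority of $\pi(\lambda)$ forces this limit to contain $Q^u$---is precisely what Brion--Pauer carry out, and you correctly sense in your last paragraph that this is where the argument must live. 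What is missing from your write-up is either a correct replacement for the limit step or, more simply, the two-line reduction from $G\times\mathbb{C}^*$ to $G$ once BP87 is invoked.
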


\begin{proof}
This is essentially \cite[Proposition 3.10]{BP87} 
except we consider only the action of $G$ and not the full action of 
$G\times \mathbb{C}^*$. But it follows easily from this case. Indeed, 
if $\tilde{H}_0$ contains a maximal unipotent subgroup $U\times \{1\}$ 
of $G\times \mathbb{C}^*$, then $H_0$ obviously contains $U$, so it 
is horospherical. Furthermore, we have 
$\tilde{H}_0 = (Q^u\times \{1\})(L\times \mathbb{C}^*\cap \tilde{H}_0)$
and taking the intersection with $G\times \{1\}$ yields 
$H_0 = Q^u(L\cap H_0)$.
\end{proof}

\subsection{Equivariant test configurations for spherical varieties}

We will in this section apply the general theory of spherical varieties
to construct special equivariant test configurations for $\mathbb{Q}$-Fano 
spherical varieties. Combined with the description of the action of $\mathbb{C}^*$
on equivariant degenerations obtained in Proposition~\ref{prop_act}, we will 
have enough information about spherical test configurations for the proof of 
our main result. 

\subsubsection{Equivariant test configurations}

Let us first recall the definition of test configurations.

\begin{defn}
Let $(X,L)$ be a polarized normal projective variety. 

A \emph{test configuration} for $(X,L)$  
is a normal variety $\mathcal{X}$ with an action of $\mathbb{C}^*$, 
equipped with a $\mathbb{C}^*$-linearized line bundle $\mathcal{L}$ 
and a $\mathbb{C}^*$-equivariant flat morphism 
$\pi : \mathcal{X} \longrightarrow \mathbb{C}$ such that $\mathcal{L}$ is 
$\pi$-ample, the fiber 
$(\pi^{-1}(1),\mathcal{L}|_{\pi^{-1}(1)})$ is isomorphic to $(X,L^r)$ for 
some fixed integer $r>0$.

The \emph{scheme theoretic} fiber of $\pi$ over $0\in \mathbb{C}$ is called the 
\emph{central fiber} 
of $\mathcal{X}$ and denoted by $X_0$. The fiber $X_t$ over any 
$0\neq t\in \mathbb{C}$ is isomorphic to $X$ thanks to the action of 
$\mathbb{C}^*$.

A test configuration for $(X,L)$ is \emph{special} if the central fiber 
$X_0$ is a normal variety, in particular reduced and irreducible.

If a reductive group $G$ acts on $(X,L)$, we say that a test configuration
is $G$-\emph{equivariant} if $(\mathcal{X},\mathcal{L})$ admits an action of $G$ which 
commutes with the $\mathbb{C}^*$ action and such that the isomorphism between
$(X,L^r)$ and $(\pi^{-1}(1),\mathcal{L}|_{\pi^{-1}(1)})$ is $G$-equivariant.
\end{defn}

The definition extends \emph{verbatim} to normal varieties equipped with an 
ample $\mathbb{Q}$-line bundle. In the rest of the paper, we always consider 
$\mathbb{Q}$-Fano varieties, and the ample $\mathbb{Q}$-line bundle $L$ will 
always be the anticanonical $\mathbb{Q}$-line bundle $K_X^{-1}$, so we will 
omit it in the notations. Furthermore, if the test configuration is special, 
the induced $\mathbb{Q}$-line bundle $L_0$ on the central fiber $X_0$ is the anticanonical line bundle $K_{X_0}^{-1}$, so 
the central fiber $X_0$ is also a $\mathbb{Q}$-Fano variety (see for example
\cite[Lemma 2.2]{Ber16}).

Let $(X,L)$ be a polarized spherical variety under $G$. Then if 
$(\mathcal{X},\mathcal{L})$ is a $G$-equivariant test configuration, 
the normal variety $\mathcal{X}$ is a spherical variety under the action 
of $G\times \mathbb{C}^*$. More precisely, if $(X,x)$ is a spherical embedding 
of $G/H$, then $(\mathcal{X},\tilde{x})$ is a spherical embedding of 
$G\times \mathbb{C}^*/H\times \{1\}$, where $\tilde{x} \in \pi^{-1}(1)$ 
corresponds to $x$ in the isomorphism between $\pi^{-1}(1)$ and $X$.

The central fiber $X_0$ equipped with the reduced induced structure 
is a normal $G\times \mathbb{C}^*$-stable subvariety of the 
$G\times \mathbb{C}^*$-spherical variety $\mathcal{X}$, 
hence it is also a $G\times \mathbb{C}^*$-spherical variety. 
In particular, if we consider only the union of the open 
$G\times \mathbb{C}^*$-orbits in $\mathcal{X}$ and in $\pi^{-1}(0)$, 
we obtain an elementary embedding of $(G\times \mathbb{C}^*)/(H\times \{1\})$, 
which can be studied using Section~\ref{sec_elem_emb}. 
Whenever the test configuration is special, the scheme theoretic central 
fiber itself is a spherical variety.

Finally remark that since the action of 
$\mathrm{Aut}^0_{G\times \mathbb{C}^*}(G/H\times \mathbb{C}^*)
\supset \mathrm{Aut}^0_{G}(G/H)\times \{1\}$
extends to any $G/H\times \mathbb{C}^*$-embedding 
by Section~\ref{sec_equi_aut}, any $G$-equivariant 
test configuration for a spherical embedding $X$ of $G/H$
is also $G\times \mathrm{Aut}^0_{G}(X)$-equivariant.

\subsubsection{Construction}

\begin{thm}
\label{thm_exist_test}
Let $X$ be a $\mathbb{Q}$-Fano spherical embedding of $G/H$.
Let $m\in \mathbb{N}^*$, and let 
$\lambda \in \mathfrak{Y}(T)\cap \pi^{-1}(\mathcal{V})$.
Then there exists a $G$-equivariant test configuration 
$(\mathcal{X}, \mathcal{L})$ for $X$, 
with irreducible central fiber $X_0$, such that $X_0$ equipped with the 
reduced induced structure is a spherical embedding of $G/H_0$ 
and the action of $e^{\tau}\in \mathbb{C}^*$ on $G/H_0$ is given by 
$e^{\tau}\cdot gH_0 = g\lambda(e^{-\tau/m})H_0$. 
Furthermore, there exists $k\in \mathbb{N}^*$ such that the $G$-equivariant 
test configuration constructed for $(m, k\lambda)$ is special.
\end{thm}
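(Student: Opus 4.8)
The plan is to realize $\mathcal{X}$ as a spherical embedding of $G\times\mathbb{C}^*/H\times\{1\}$ and to read off both fibers and the $\mathbb{C}^*$-action from its colored fan, using the degeneration theory of Section~\ref{sec_elem_emb}. Write $\bar{\lambda}$ for the image of $\lambda$ in $\mathcal{N}\otimes\mathbb{Q}$, so that $\bar{\lambda}\in\mathcal{V}$, and set $v=(\bar{\lambda},m)$; since $m>0$ this lies in the valuation cone $\mathcal{V}\times\mathbb{Q}$ of $G\times\mathbb{C}^*/H\times\{1\}$ inside $(\mathcal{N}\otimes\mathbb{Q})\oplus\mathbb{Q}$. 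Embedding $\mathcal{F}_X$ into $(\mathcal{N}\otimes\mathbb{Q})\times\{0\}$, I would take $\mathcal{F}_{\mathcal{X}}$ to be the colored fan whose maximal colored cones are the $(\mathcal{C}+\mathbb{R}_{\geq0}v,\mathcal{R})$ for $(\mathcal{C},\mathcal{R})$ maximal in $\mathcal{F}_X$, together with their faces and the colorless ray $(\mathbb{R}_{\geq0}v,\emptyset)$. Each such cone meets $\mathcal{V}\times\mathbb{Q}$ and the fibers inherit completeness from that of $X$, so by Theorem~\ref{thm_class} this defines a normal spherical embedding $\mathcal{X}$.

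The morphism to the base comes from Theorem~\ref{thm_morphisms}: the coordinate on $\mathbb{C}$ is the $B\times\mathbb{C}^*$-eigenfunction of weight $(0,1)\in\mathcal{M}\oplus\mathbb{Z}$, and this weight is nonnegative on every cone of $\mathcal{F}_{\mathcal{X}}$ and vanishes exactly on $(\mathcal{N}\otimes\mathbb{Q})\times\{0\}$, yielding a flat $G\times\mathbb{C}^*$-equivariant map $\pi:\mathcal{X}\to\mathbb{C}$ with $\mathbb{C}^*$ acting by multiplication on $\mathbb{C}$. Deleting the divisor attached to the ray $\mathbb{R}_{\geq0}v$ leaves the subfan $\mathcal{F}_X\times\{0\}$, whose variety is $X\times\mathbb{C}^*$; hence $\pi^{-1}(\mathbb{C}^*)\cong X\times\mathbb{C}^*$ and $\pi^{-1}(1)\cong X$. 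The central fiber is the prime divisor $D_v$ attached to $\mathbb{R}_{\geq0}v$, and its open $G\times\mathbb{C}^*$-orbit together with the open orbit of $\mathcal{X}$ is precisely the elementary embedding of $G\times\mathbb{C}^*/H\times\{1\}$ with ray $\mathbb{R}_{\geq0}v$. Applying Proposition~\ref{prop_act} to the adapted one parameter subgroup $(\lambda,m)$ (legitimate by Propositions~\ref{prop_levi_choice} and~\ref{prop_still_adapted}) identifies this orbit with $G/H_0$ and gives the action $e^{\tau}\cdot gH_0=g\lambda(e^{-\tau/m})H_0$, while $X_0=D_v$ is the spherical embedding of $G/H_0$ whose colored fan is the star of $\mathbb{R}_{\geq0}v$ in $\mathcal{F}_{\mathcal{X}}$.

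For the polarization, I would take $\mathcal{L}$ to be a denominator-clearing multiple of the relative anticanonical $\mathbb{Q}$-bundle, described by the anticanonical $B\times\mathbb{C}^*$-invariant divisor of Section~\ref{sec_GH} on $\mathcal{X}$ corrected by a multiple of $\pi^{*}(0)$. On $\pi^{-1}(\mathbb{C}^*)\cong X\times\mathbb{C}^*$ this is the pullback of $K_X^{-1}$, so $\mathcal{L}|_{\pi^{-1}(1)}\cong K_X^{-r}$. Since for a proper flat morphism relative ampleness is equivalent to fiberwise ampleness, $\pi$-ampleness reduces to ampleness of $K_X^{-1}$ on the generic fiber and of the restriction to the central fiber, the latter following from Brion's criterion (Proposition~\ref{prop_ample}) applied to the colored fan of $X_0$, whose convexity and color conditions are inherited from those of $\mathcal{F}_X$. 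This simultaneously exhibits $X_0$ as $\mathbb{Q}$-Fano with $\mathcal{L}|_{X_0}\cong K_{X_0}^{-r}$.

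The main obstacle is specialness, namely the normality of $\pi^{-1}(0)$. The total space $\mathcal{X}$ is normal and $D_v$, as a $G\times\mathbb{C}^*$-orbit closure, is a normal variety; the delicate point is that the scheme $\pi^{-1}(0)$ equals $\langle(0,1),v_0\rangle\, D_v$, where $v_0$ is the primitive generator of $\mathbb{R}_{\geq0}v$, so it is nonreduced exactly when this multiplicity exceeds one, i.e. when $m>1$. I would resolve this by the ramified base change $s\mapsto s^m$ followed by normalization, after which the central fiber becomes the reduced normal variety $X_0$; this base change is precisely what rescales the induced $\mathbb{C}^*$-action to $g\lambda(e^{-\tau/m})H_0$, in agreement with Proposition~\ref{prop_act}. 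A secondary difficulty is calibrating the correction term in $\mathcal{L}$ so that its support function restricts simultaneously to that of $K_X^{-1}$ on the generic fiber and to that of $K_{X_0}^{-1}$ on the central fiber.
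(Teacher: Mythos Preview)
Your construction is close in spirit to the paper's: both realize $\mathcal{X}$ as a spherical embedding of $G\times\mathbb{C}^*/H\times\{1\}$ by ``coning off'' $\mathcal{F}_X$ along the ray through $(\bar\lambda,m)$, and both read off the $\mathbb{C}^*$-action on the open orbit of the central fiber from Proposition~\ref{prop_act}. The main structural difference is in the polarization. The paper does not stay over $\mathbb{C}$ and argue relative ampleness fiberwise; instead it compactifies over $\mathbb{P}^1$ by also adjoining the ray $(0,-1)$, and then writes down an explicit \emph{globally ample} divisor $\delta=a\bigl(\sum_Y\overline{Y\times\mathbb{C}^*}+\sum_D n_D\overline{D\times\mathbb{C}^*}\bigr)+b(X_\infty+X_0)$ on the complete $\mathcal{X}$, checking Brion's criterion (Proposition~\ref{prop_ample}) and integrality of the associated piecewise-linear function $l_\delta$ directly for suitable $a,b$. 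This is more hands-on than your relative-anticanonical-plus-correction, but it dissolves both of the difficulties you flag: once $\delta$ is ample on the projective $\mathcal{X}$, its restriction to $p^{-1}(\mathbb{C})$ is automatically $p$-ample, and by construction it restricts to $K_X^{-a}$ on every fiber over $\mathbb{C}^*$. The paper then handles the $G\times\mathbb{C}^*$-linearization separately via \cite{KKV89}.

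There is one genuine slip in your treatment of specialness. You correctly observe that the scheme-theoretic fiber $\pi^{-1}(0)$ has multiplicity $\langle(0,1),v_0\rangle$ along $D_v$, where $v_0$ is the primitive generator of the ray. But your base change $s\mapsto s^m$ followed by normalization does \emph{not} produce the action $g\lambda(e^{-\tau/m})H_0$: the new $\mathbb{C}^*$ (coordinate $s$) acts on the old total space through $e^{m\sigma}$, so on the central fiber the induced action becomes $gH_0\mapsto g\lambda(e^{-m\sigma/m})H_0=g\lambda(e^{-\sigma})H_0$, i.e.\ the $m=1$ action, not the one in the statement. In other words the action $g\lambda(e^{-\tau/m})H_0$ was \emph{already} present on the reduced $X_0$ before any base change (that is exactly what Proposition~\ref{prop_act} gives), and base change rescales it away. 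The paper does not perform a base change at all; it simply reduces at the outset to $(\lambda,m)$ primitive (harmless, since $g\lambda(e^{-\tau/m})H_0$ depends only on the ray) and identifies $X_0$ with the reduced orbit closure attached to that ray.
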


\begin{proof}
The construction depends only on the line generated by $(\lambda,m)$, so 
we may assume that $(\lambda, m)$ is primitive.
The construction uses results of \cite{GH15} as recalled in Section~\ref{sec_GH}.
Let $d$ be the $\mathbb{Q}$-Cartier divisor representing $K_X^{-1}$ constructed there, 
and $n_D$ the coefficients of the colors.
Let $\Delta_d^*$ be the $\mathbb{Q}$-$G/H$-reflexive polytope associated 
to $X$, considered in $(\mathcal{N}\oplus \mathbb{Z})\otimes \mathbb{R}$.

We build from this polytope the colored fan $\mathcal{F}_{\mathcal{X}}$ 
of $\mathcal{X}$. 
Remark that the colors of $G/H$ are in bijection with the colors of 
$G/H\times \mathbb{C}^*$ by sending $D\in \mathcal{D}$ to $D\times \mathbb{C}^*$.
For every face $F$ of $\Delta_d^*$, consider the three colored cones
\begin{align*}
&(\mathrm{Cone}(F), \rho^{-1}(\mathrm{Cone}(F)), \\
&(\mathrm{Cone}(F\cup \{(0,-1)\}), \rho^{-1}(\mathrm{Cone}(F\cup \{(0,-1)\})) \\
&(\mathrm{Cone}(F\cup \{(\lambda,m)\}), \rho^{-1}(\mathrm{Cone}(F\cup \{(\lambda,m)\})).
\end{align*}
Among these cones, keep only the cones such that 
the intersection of the relative interior of their support with 
$\mathcal{V} \times \mathbb{Q}$ is non empty.
Then $\mathcal{F}_{\mathcal{X}}$ is the set of these cones.

It is clear that the corresponding spherical embedding of $G/H\times \mathbb{C}^*$ 
is complete. Furthermore, by the description of equivariant morphisms between 
spherical varieties, it admits a $G\times \mathbb{C}^*$-equivariant inclusion 
of $X\times \mathbb{C}^*$, and a $G \times \mathbb{C}^*$-equivariant surjective 
morphism $p$ to the spherical embedding $\mathbb{P}^1$ of the $G\times \mathbb{C}^*$ 
homogeneous space $G\times \mathbb{C}^*/G\times \{1\}\simeq \mathbb{C}^*$, such 
that $p^{-1}(\mathbb{C}^*)=X\times \mathbb{C}^*$.

We then build the line bundle. We chose to build $\mathcal{X}$ complete, 
and will choose a line bundle $\mathcal{L}$ that is ample.
Denote by $X_{\infty} = p^{-1}(\infty)$, respectively $X_0=p^{-1}(0)$, the fibers 
of $p$, corresponding as irreducible $G\times \mathbb{C}^*$-invariant divisors of $\mathcal{X}$
to the rays generated by $(0,-1)$, respectively $(\lambda,m)$. 
Remark that these are spherical varieties, and that they are the closures in 
$\mathcal{X}$ of equivariant degenerations of $G/H$. 
Consider the $B\times \mathbb{C}^*$-invariant Weil divisor
\[
\delta = a (\sum_{Y\in W_G} \overline{Y\times \mathbb{C}^*} 
+ \sum_{D\in \mathcal{D}} n_D \overline{D\times \mathbb{C}^*})
+ b (X_{\infty}+X_0) 
\]
where $a,b  \in \mathbb{N}$.

Let $v$ denote the support function of the dilated polytope $a\Delta_d = \Delta_{ad}$
and consider the function $l_{\delta}$ defined for 
$(x,n)\in \mathcal{N}\otimes \mathbb{Q} \oplus \mathbb{Q}$ by
\begin{itemize}
\item $l_{\delta}(x,n)= v(-x+ n/m\lambda) + bn/m$ if $n\geq 0$ and 
\item $l_{\delta}(x,n)= v(x) - bn$ if $n\leq 0$.
\end{itemize}
This is clearly a piecewise linear function on the support of the fan 
$\mathcal{F}_{\mathcal{X}}$. It furthermore satisfies the 
three conditions of Proposition~\ref{prop_ample}, provided $b> 0$ is large enough. 
It is however not integral in general.
By definition, it satisfies $l_{\delta}(0,-1)=b$ and 
$l_{\delta}(\lambda,m)=b$.
By the relation between $l_d$ and the support function of the polytope 
$\Delta_d$ recalled in Section~\ref{sec_pol_car}, 
we thus have 
\begin{align*}
\delta  = & \sum_{Y\in \mathcal{I}_X^G} l_{\delta}(u_Y,0) \overline{Y\times \mathbb{C}^*} 
+ \sum_{D\in \mathcal{D}_X} l_{\delta}(\rho(D),0) \overline{D\times \mathbb{C}^*} \\
& + \sum_{D\in \mathcal{D}\setminus \mathcal{D}_X} an_D \overline{D\times \mathbb{C}^*}
+ l_{\delta}(0,-1)X_{\infty}+ l_{\delta}(\lambda,m) X_0. 
\end{align*}

Let us now check that $l_{\delta}$ is integral piecewise linear for $a$ and $b$ divisible enough.
Denote by $m_F\in \mathcal{M}\otimes \mathbb{Q}$ the vertex of $\Delta_d$ corresponding 
to the maximal face $F$ of $\Delta_d^*$. 
Any maximal cone of $\mathcal{F}_{\mathcal{X}}$ is either the cone over 
the union of some maximal face $F$ of $\Delta_d^*$ and $(0,-1)$, in which case 
$l_{\delta}$ is given by the linear function $(-am_F,-b)$ on this 
cone, or it is the union of some maximal face $F$ of $\Delta_d^*$ and $(\lambda,m)$, in which case 
$l_{\delta}$ is given by the linear function $(-am_F,-am_F(\lambda)/m+b/m)$ on this 
cone.
Up to choosing $a$ and $b$ divisible enough, we can thus assume that 
$l_{\delta}$ is given by an element of $\mathcal{M}\oplus \mathbb{Z}$ on 
all maximal cones. 

This finally shows that $\delta$ is an ample Cartier divisor.
The corresponding ample line bundle $\mathcal{L}$ obviously restricts to 
$p_1^*K_X^{-a}$ on $X\times \mathbb{C}^*$, where $p_1$ is the first projection.
It remains to show that it may be equipped with a 
$G\times \mathbb{C}^*$-linearization such that its  
restriction to $X\times \mathbb{C}^*$ respects the natural linearization
of $p_1^*K_X^{-a}$.
Since $\mathcal{X}$ is normal, as a spherical variety, a tensor power $\mathcal{L}^n$
of $\mathcal{L}$ admits a $G\times \mathbb{C}^*$-linearization \cite{KKV89}. 
Furthermore, two such linearizations differ by a character of $G\times \mathbb{C}^*$. 
As a consequence, we may change the linearization of $\mathcal{L}^n$ 
to obtain the one we want.

The test configuration is obtained by considering 
$p^{-1}(\mathbb{C})$, equipped with the 
map $p$ and the restriction of $\mathcal{L}$.
If we consider the spherical embedding of $G/H\times \mathbb{C}^*$ 
given by the ray generated by $(\lambda,m)$, we obtain an equivariant 
degeneration of $G/H$, and the action of $\mathbb{C}^*$ on the closed 
orbit $G/H_0$, which is the open orbit of $G$ in the central fiber $X_0$, 
is as expected by Proposition~\ref{prop_act}.

The test configuration $\mathcal{X}$ constructed above is not special in general. 
However, the test configuration $\mathcal{X}_k$ obtained from 
$\mathcal{X}$ by the base change 
$\mathbb{C}\longrightarrow \mathbb{C}, z \mapsto z^k$ 
has reduced central fiber for $k$ divisible enough 
(see for example \cite[Proof of Proposition 7.15]{BHJ}). 
The effect on the $G\times C^*$ variety constructed above is simply 
obtained by keeping the same fan but changing the lattice 
$\mathcal{N}\oplus \mathbb{Z}$ to 
$\mathcal{N}\oplus \frac{1}{k}\mathbb{Z}$.
Hence the result.
\end{proof}

Taking any $\lambda$ projecting to the interior of the valuation cone 
yields, thanks to Proposition~\ref{prop_iso_horo}, 

\begin{cor}
\label{cor_horo_deg}
Any $\mathbb{Q}$-Fano spherical variety  admits a special test configuration 
with horospherical central fiber.
\end{cor}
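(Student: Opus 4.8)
The plan is to read off the corollary as a special case of Theorem~\ref{thm_exist_test}, choosing the one parameter subgroup $\lambda$ so that its projection lands in the \emph{interior} of the valuation cone; the horosphericity of the central fiber then comes for free from Proposition~\ref{prop_iso_horo}. Concretely, I would apply Theorem~\ref{thm_exist_test} to the given $\mathbb{Q}$-Fano spherical embedding $X$ of $G/H$, with any admissible pair $(\lambda,m)$ such that $\pi(\lambda)$ lies in the interior of $\mathcal{V}$.

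The one point that deserves a word of justification is that such a $\lambda \in \mathfrak{Y}(T)\cap \pi^{-1}(\mathcal{V})$ exists at all. Here I would use that the valuation cone $\mathcal{V}$ is a full dimensional convex cone in $\mathcal{N}\otimes \mathbb{Q}$, so that its interior is non-empty, together with the surjectivity of the quotient map $\pi:\mathfrak{Y}(T)\otimes \mathbb{Q}\longrightarrow \mathcal{N}\otimes \mathbb{Q}$. Picking a rational point in the interior of $\mathcal{V}$, lifting it through $\pi$, and clearing denominators produces an integral $\lambda\in \mathfrak{Y}(T)$ whose image $\pi(\lambda)$ is still interior, since scaling by a positive integer preserves the interior of a cone; any $m\in \mathbb{N}^*$ then completes the data required by Theorem~\ref{thm_exist_test}.

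With this choice, Theorem~\ref{thm_exist_test} yields a $G$-equivariant test configuration $(\mathcal{X},\mathcal{L})$ for $X$ whose central fiber $X_0$ is a spherical embedding of $G/H_0$, where $H_0$ is the limit isotropy group of Proposition~\ref{prop_act}. Since $\pi(\lambda)$ is interior to $\mathcal{V}$, Proposition~\ref{prop_iso_horo} applies and gives that $H_0$ is horospherical for $G$ (indeed $H_0=Q^u(L\cap H_0)$). Hence $X_0$ is a horospherical variety, which is exactly what the corollary asserts. The only possible obstacle is the mild existence statement of the second paragraph; everything else is a direct invocation of the two quoted results, so no serious difficulty is expected.
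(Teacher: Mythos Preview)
Your proposal is correct and follows exactly the paper's approach: the paper derives the corollary in a single sentence by taking any $\lambda$ projecting to the interior of the valuation cone and invoking Proposition~\ref{prop_iso_horo}. Your second paragraph merely spells out the existence of such a $\lambda$, which the paper leaves implicit.
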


\section{Modified Futaki invariant on $\mathbb{Q}$-Fano horospherical varieties}
\label{sec_fut_horo}

\subsection{Modified Futaki invariant on singular varieties}

Let $X$ be a normal $\mathbb{Q}$-Fano variety. 
Let $A$ be an ample line 
bundle on $X$ such that there exists $m\in \mathbb{N}^*$ such that the 
restriction of $A$ to the regular part 
$X_{\mathrm{reg}}$ of $X$ is $K_{X_{\mathrm{reg}}}^{-m}$.
The cohomology class $c_1(X)$ is the class $c_1(A)/m$.
We will always assume that $X$ has log terminal singularities. 
This is harmless in our setting since $\mathbb{Q}$-Fano (horo)spherical varieties 
have log terminal singularities \cite[Section 5.1]{AB04SRVII}. 
See \cite{Pas15} for a survey on singularities of spherical varieties. 

Let $\mathrm{Aut}^0(X)$ denote the connected component 
of the neutral element in the automorphism group of $X$. 
Since $X$ is a $\mathbb{Q}$-Fano variety, this is a linear algebraic group.
Choose a maximal compact subgroup $K$ in $\mathrm{Aut}^0(X)$. 
Its complexification $G$ 
is a Levi subgroup of $\mathrm{Aut}^0(X)$. 
Choose a maximal torus $T$ in $G$ such that $K\cap T$ is the maximal compact 
torus of $T$, and denote by $\mathfrak{a}$ the subspace $i\mathfrak{k}\cap \mathfrak{t}$ 
in  the Lie algebra of $G$.
Any element $\xi \in \mathfrak{a}$ is such that $J\xi \in \mathfrak{k}$ 
generates a compact torus in $K$. 

Let $q_A$ be a \emph{smooth}, \emph{positive} $K$-invariant hermitian metric on $A$.
On a normal variety, this means that the potentials of $q_A$ with respect to 
local trivializations of $A$ are the restrictions of smooth and strictly plurisubharmonic 
functions to $X$, given a local embedding of $X$ in $\mathbb{C}^N$.
Let $\omega_A\in 2\pi c_1(A)$ denote the curvature current of $q_A$, 
and $\omega=\omega_A/m \in 2\pi c_1(X)$. The current $\omega$ is still 
$K$-invariant, and defines a Kähler form on the regular part  $X_{\mathrm{reg}}$
of $X$.

Remark that on $X_{\mathrm{reg}}$, the anticanonical line bundle $K_X^{-1}$ is a well 
defined line bundle, and the metric $q_A$ induces a smooth positive metric $q$ on 
$K^{-1}_{X_{\mathrm{reg}}}$.
The form $\omega$ is the curvature form of $q$ on $X_{\mathrm{reg}}$. 
Let $dV_q$ denote the volume form on $X_{\mathrm{reg}}$ associated to $q$, 
defined by 
\[
dV_q(x)= |s^{-1}|^2_q s \wedge \bar{s}
\]
where $s$ is any non zero element of the fiber over $x$ of 
the canonical line bundle of $X_{\mathrm{reg}}$, and $s^{-1}$ is the 
corresponding non zero element of the dual line bundle $K_{X_{\mathrm{reg}}}^{-1}$.

We call \emph{Hamiltonian function} of $\xi \in \mathfrak{a}$ with respect to $\omega$
the smooth function $\theta_{\xi}$ on $X_{\mathrm{reg}}$ defined by
$L_{\xi}\omega = i\partial \bar{\partial} \theta_{\xi}$, 
where $L_{\xi}$ is the Lie derivative with respect to $\xi$. 
It is shown to exist and to be bounded in \cite{BWN}.
This function is well defined up to a constant and we fix this constant 
by the normalization condition:
\[
\int_{X_{\mathrm{reg}}} \theta_{\xi} dV_{q} = 0.
\]

\begin{defn}
The \emph{modified Futaki invariant} of $X$ with respect to $\zeta \in \mathfrak{a}$
is defined for $\xi\in \mathfrak{a}$ by
\[
\mathrm{Fut}_{X,\zeta}(\xi) =
- \int_{X_{\mathrm{reg}}} \theta_{\xi} e^{\theta_{\zeta}}\omega^n.
\]
\end{defn}

It is proved in \cite{BWN} that it is well defined and independent of the choice of 
the metric $q_A$. 

If we want to consider the K-stability of $X$ with respect to $G$-equivariant test 
configurations (for any reductive subgroup $G$ of $\mathrm{Aut}^0(X)$),
the only $\xi$ at which we will need to evaluate the Futaki invariant generate automorphisms 
commuting with $G$. Recall that we described these $G$-equivariant automorphisms when 
$X$ is spherical under the action of $G$ in Section~\ref{sec_equi_aut}. 

Assume that $X$ is horospherical under $G$, and that $x\in X$ is in the open orbit, 
with isotropy group $H$. 
The group of equivariant automorphisms of $X$ is isomorphic to 
$P/H$, where we recall that $P=N_G(H)$ is a parabolic, 
and the action of $P/H$ on the open orbit $G\cdot x= G/H$ is by multiplication 
on the right by the inverse. 
Recall that $P/H$ is a torus. Its maximal compact subtorus is thus well 
defined and we denote by 
\[
\mathfrak{b}_1 = \mathfrak{Y}(P/H) \otimes \mathbb{R}
\] 
the subalgebra 
of the Lie algebra of $P/H$ obtained as $J$ times the Lie algebra of the maximal 
compact subtorus, where $J$ denotes the complex structure on $P/H$. 
By Proposition~\ref{prop_polar_dec}, we can identify $\mathfrak{b}_1$ 
with $\mathfrak{a}_1$. We keep a different notation to emphasize that 
an element of $\mathfrak{b}_1$ acts on the right and not on the left.

\subsection{Hermitian metrics on polarized horospherical varieties}

Let $X$ be a projective embedding of a horospherical homogeneous space $G/H$, 
equipped with a 
$G\times (P/H)$-linearized ample line bundle $L$, where 
$P$ is the normalizer of $H$ in $G$.
Denote by $\chi$ the character of the isotropy subgroup $\mathrm{diag}(P) \subset G\times (P/H)$ 
associated to $L$.

Fix $B$ a Borel subgroup of $G$ containing $T$ and opposite to $P$, 
that is the opposite Borel subgroup $B^{-1}$ is a subgroup of $P$.
Let $\Delta^+$ be the moment polytope associated to the $G$-linearized 
line bundle $L$ with respect to $B$. 
Consider the polytope $\Delta = \chi|_T - \Delta^+$, and let 
$v_{2\Delta}$ denote the support function of the dilated polytope $2\Delta$.

\begin{prop}
\label{prop_asymptotic}
Let $q$ be a $K$-invariant locally bounded metric on $L$, and let
$u:\mathfrak{a}_1 \longrightarrow \mathbb{R}$ be the function associated to its 
restriction to $G/H$. 
Assume that $q$ is smooth and positive over $G/H$.
Then $u$ is a smooth and strictly convex function, 
and the function $u-v_{2\Delta}$ is bounded on 
$\mathfrak{a}_1$. 
\end{prop}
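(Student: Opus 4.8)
The plan is to separate the three assertions. Smoothness of $u$ is immediate: since $q$ is smooth over $G/H$ and $\exp(\mathfrak{a}_1)H\subset G/H$, both $s_r$ and $\iota^*q$ are smooth along $\exp(\mathfrak{a}_1)$, so $u(x)=-2\ln|s_r(\exp(x)H)|_{\iota^*q}$ is smooth. Strict convexity I would read off Theorem~\ref{thm_curv_form}. As $q$ is positive, its curvature form $\omega$ is a positive $(1,1)$-form, and the formula there writes $\omega_{\exp(x)H}$ as a Hermitian form that is block diagonal in the bases $\{\gamma_j\}$ and $\{\gamma_\alpha\}$, with no cross terms: the $\{\gamma_j\}$-block equals $\tfrac14$ times the Hessian $(\partial^2 u/\partial x_{j_1}\partial x_{j_2})(x)$, and the $\{\gamma_\alpha\}$-block is diagonal with entries $\langle\alpha,\nabla u(x)/2-t_\chi\rangle$. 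Positivity of a Hermitian form passes to its principal blocks, so the Hessian of $u$ is positive definite at every $x\in\mathfrak{a}_1$; this is exactly strict convexity (and, as a byproduct, $\langle\alpha,\nabla u(x)/2-t_\chi\rangle>0$ for all $\alpha\in\Phi_P^+$).

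For the comparison $u-v_{2\Delta}=O(1)$ I would first reduce to a single model metric. Fix $r$ large and divisible enough that $L^r$ is very ample and that the weights occurring in $H^0(X,L^r)$ have convex hull $r\Delta^+$. Equip $H^0(X,L^r)$ with a $G$-invariant Hermitian inner product and let $q_{FS}$ be the $r$-th root of the associated Fubini--Study metric, a $K$-invariant continuous positive metric on $L$, with associated function $u_{FS}$. Since $X$ is projective, hence compact, and $q$, $q_{FS}$ are locally bounded metrics on the same line bundle, their ratio is a globally defined, locally bounded, hence bounded, function $\rho$ on $X$; restricting to the torus gives $u-u_{FS}=\rho(\exp(\cdot)H)$, which is bounded. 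It thus suffices to show $u_{FS}-v_{2\Delta}=O(1)$.

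The point of choosing $q_{FS}$ is that $u_{FS}$ is explicit. Writing $H^0(X,L^r)=\bigoplus_{\lambda\in\Delta_r}V_\lambda$ and realizing each $V_\lambda$ by matrix coefficients on $G/H$, evaluation at $\exp(x)$ only pairs against the highest weight vector, which is a $T$-eigenvector; consequently, after passing from $s_l$ to $s_r$ through the relation $s_l=\chi s_r$ of Section~\ref{sec_sections}, every section coming from $V_\lambda$ restricts on $\exp(\mathfrak{a}_1)$ to a scalar multiple of the single exponential $e^{\langle\lambda+r\chi,x\rangle}$. Summing $|\cdot|^2$ over an orthonormal basis of each isotypic block collapses by Parseval to $C_\lambda\,e^{2\langle\lambda+r\chi,x\rangle}$ with $C_\lambda>0$, whence
\[
u_{FS}(x)=\frac1r\ln\!\sum_{\lambda\in\Delta_r}C_\lambda\,e^{2\langle\lambda+r\chi,\,x\rangle}.
\]
This log-sum-exp of finitely many affine functions lies within $\tfrac1r\ln(\#\Delta_r)+\max_\lambda\tfrac1r|\ln C_\lambda|$ of its maximal exponent $\max_{\lambda\in\Delta_r}2\langle\lambda/r+\chi,x\rangle$. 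As the points $\lambda/r+\chi$ lie in $\Delta=\chi+\Delta^+$ and, for our choice of $r$, have convex hull exactly $\Delta$, this maximum equals $v_{2\Delta}(x)$ for every $x$. Hence $u_{FS}-v_{2\Delta}$ is bounded, and combined with the previous paragraph so is $u-v_{2\Delta}$.

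The main obstacle is the bookkeeping in the last paragraph: identifying the exponent of the restriction of a section to $\exp(\mathfrak{a}_1)$ as $\lambda+r\chi$, which requires keeping careful track of the left $G$-action versus the right $P/H$-action and of the shift relating $s_l$ and $s_r$, and justifying that the weights of $\Delta_r$ have convex hull $r\Delta^+$ for $r$ divisible enough, resting on the rationality of $\Delta^+$ and the description of moment polytopes of ample bundles recalled in Section~\ref{sec_pol_car}. Everything else---the reduction to $q_{FS}$ via compactness and the log-sum-exp estimate---is soft.
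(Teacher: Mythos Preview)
Your treatment of smoothness and strict convexity is the same as the paper's: both read this off Theorem~\ref{thm_curv_form}. For the asymptotic estimate $u-v_{2\Delta}=O(1)$, however, you take a genuinely different route. The paper passes to the decoloration $\tilde X\to X$, so that $\tilde Z=\overline{P/H}\subset\tilde X$ is a compact toric variety; it then identifies the toric moment polytope of $\tilde L|_{\tilde Z}$ as $\Delta^+$ directly from Brion's description of Cartier divisors, and compares the pulled-back metric with the Batyrev--Tschinkel metric on $\tilde Z$, whose potential with respect to $s_l$ is exactly $v_{2\Delta^+}$. Your plan replaces the Batyrev--Tschinkel reference metric by a Fubini--Study one on $X$ itself and computes its potential as a log-sum-exp over an orthonormal basis of $H^0(X,L^r)$. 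The soft parts of your argument---boundedness of $u-u_{FS}$ by compactness, and the log-sum-exp estimate---are fine, and this is an attractive alternative in principle.

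The gap is precisely in the exponent bookkeeping that you flag. Since $H\supset U$, the evaluation map $\mathrm{ev}_{eH}\colon V_\lambda\to L^r_{eH}$ is $U$-invariant, hence lies in the one-dimensional space $(V_\lambda^*)^U$, which has $T$-weight $-w_0\lambda$. Thus $\mathrm{ev}_{eH}$ annihilates every weight space except the \emph{lowest} one, so it is the lowest weight vector---not the highest---that survives restriction to $\exp(\mathfrak a_1)$. Concretely, for a weight-$\mu$ vector $\sigma\in V_\lambda$ one gets $F_\sigma(\exp(x))\propto e^{-\langle\mu,x\rangle}$ in the $s_l$-trivialization, with the constant nonzero only when $\mu=w_0\lambda$. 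After the shift by $\chi$ to pass to $s_r$, the exponents you obtain are $r\chi-w_0\lambda$, not $r\chi+\lambda$, and the convex hull of the rescaled exponents is $\chi-w_0\Delta^+$ rather than $\chi+\Delta^+=\Delta$. For groups with $-w_0=\mathrm{id}$ on $\mathfrak X(T)$ (e.g.\ $\mathrm{SL}_2$) this discrepancy is invisible, but for types $A_{n\ge 2}$, $D_{2n+1}$, $E_6$ it is not, and you give no argument that the support functions of $\Delta^+$ and $-w_0\Delta^+$ agree on $\mathfrak a_1$. Without that, your log-sum-exp only yields $u-v_{2(\chi-w_0\Delta^+)}=O(1)$, which is not the stated conclusion. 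The paper's decoloration argument sidesteps this entirely by never appealing to the highest/lowest weight dichotomy: the polytope $\Delta^+$ is read off from the piecewise linear function $l_{\tilde d}$ on the fan, not from the $G$-module structure of $H^0$.
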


\begin{proof}
The fact that $u$ is smooth and strictly convex is 
obvious thanks to the expression of 
the curvature of the restriction of $q$ over $G/H$ obtained in 
Theorem~\ref{thm_curv_form}.

Let us now consider the \emph{discoloration} $\tilde{X}$ of $X$. 
This is the spherical embedding of $G/H$ corresponding to the fan 
$\mathcal{F}_{\tilde{X}}$ obtained as the union of the colored cones 
$(\mathcal{C},\emptyset)$, for all colored cones 
$(\mathcal{C},\mathcal{R}) \in \mathcal{F}_X$.
It is a complete toroidal embedding of $G/H$ which admits a natural 
$G\times P/H$-equivariant surjective homomorphism $\pi:\tilde{X}\longrightarrow X$, 
thanks to the description of spherical morphisms. 
The map $\pi$ is further an isomorphism between the open orbits, 
both identified with $G/H$. 

Consider the pull back $\tilde{L}:=\pi^*L$ of $L$ on $\tilde{X}$, with 
the $G\times (P/H)$-linearization induced by the linearization of $L$.
Its restriction to $G/H$ corresponds to the same character $\chi$ of $P$.
Let $s$ be a $B\times (P/H)$-semi-equivariant section of $L$, 
let $\mu$ denote the character of $B$ associated to $s$, and let $d$
be the $B$-invariant Cartier divisor defined by $s$. 

Consider the pulled back section $\tilde{s}$ of $\tilde{L}$. It is still 
$B\times (P/H)$-semi-invariant with the same $B$-character $\mu$, and its 
divisor $\tilde{d}$ is the pull back of the Cartier divisor $d$ defined by $s$. 
The function $l_{\tilde{d}}$ associated to $\tilde{d}$ coincide with 
the function $l_{d}$ (see \cite[Proof of Lemma 5.3]{Pas15}). 
On a \emph{toroidal} horospherical manifold, the function $l_{\tilde{d}}$ coincides with 
the function associated to the restriction of the Cartier divisor $\tilde{d}$
to the toric subvariety $\tilde{Z}:=\overline{P/H}\subset \tilde{X}$ under the 
action of the torus $P/H$, whose fan \emph{as a spherical variety}
is precisely the fan underlying the colored fan $\mathcal{F}_{\tilde{X}}$
\cite[Section 3.2]{Bri89}. Note that the toric fan of $\tilde{Z}$ with the 
usual conventions for toric varieties is the opposite to 
$\mathcal{F}_{\tilde{X}}$ (see \cite[Section~2]{Pez10} for details on this subtlety).

Since $L$ is ample, the polytope $\Delta^+$ is equal to 
$\mu + \Delta_d$, where $\Delta_d$ is the polytope with support function 
$x\longmapsto l_d(-x)$, associated to the ample Cartier divisor $d$. 
On the other hand, since $l_{\tilde{d}}=l_d$, the moment polytope of 
$\tilde{L}|_{\tilde{Z}}$ is $\eta + \Delta_d$ where $\eta$ is the character 
of $T/T\cap H$ associated to the restriction of $\tilde{s}$ to $\tilde{Z}$
(that is, $t\cdot\tilde{s}(t^{-1}\cdot x)=\tilde{s}(x)$ for $x\in\tilde{Z}$, 
$t\in T/T\cap H$). 
Remark that even if $\tilde{d}$ is not ample on $\tilde{X}$, 
its restriction to $\tilde{Z}$ is. 

Consider now the metric $q$ on $L$. 
The pulled back metric $\pi^*q$ on $\tilde{L}$ is still $K$-invariant and 
locally bounded. Furthermore, the associated function $\tilde{u}$ on 
$\mathfrak{a}_1$ is still $u$ since $\pi$ is an equality on $G/H$. 
Recall that with the notations of Section~\ref{sec_horo_space},
\begin{align*}
u(x)  & = -2\ln|s_r(\exp(x)H)|_{\tilde{q}} \\
	& = \left<2\chi, x \right> -2\ln|s_l(\exp(x)H)|_{\tilde{q}}.
\end{align*}

Consider the Batyrev-Tschinkel metric $q_0$ on $\tilde{L}|_{\tilde{Z}}$ \cite[Section 3.3]{Mai00}. 
This is a continuous hermitian metric on $\tilde{L}|_{\tilde{Z}}$, invariant under 
the action of the compact torus, with potential 
$u_0 : \mathfrak{a}_1  \longrightarrow \mathbb{R}$ such that
\[
u_0(x)=-2\ln|s_l(\exp(x)H)|_{q_0}=v_{-2(\eta + \Delta_d)}(x).
\]
Then since $\tilde{q}$ is locally bounded and $q_0$ is continuous, the function 
\[
u(x)-\left<2\chi, x \right>-v_{-2(\eta + \Delta_d)}(x) = 
-2\ln \frac{|s_l(\exp(x)H)|_{\tilde{q}}}{|s_l(\exp(x)H)|_{q_0}}
\] is bounded.

It remains to determine the polytope $\eta + \Delta_d$.
Recall that $\Delta^+ = \mu + \Delta_d$, hence the polytope above is 
$\eta - \mu + \Delta^+$.
It is then straightforward to check that $\mu-\eta$ 
is equal to $(x,y)\mapsto \left<\chi,y\right>$ for 
$(x,y)\in \mathfrak{a}_1 \oplus \mathfrak{X}(T/T\cap H)\otimes \mathbb{R} = \mathfrak{a}$.
Indeed, by definition of $\eta$ and $\mu$, $\eta=\mu$ on 
$\mathfrak{a}_1$, and by definition of $\chi$ and $\mu$, 
we have, for $y\in \mathfrak{X}(T/T\cap H)\otimes \mathbb{R}$, 
\[ e^{\left<\mu,y\right>}s(x) = \exp(y)\cdot s(\exp(-y)\cdot x) = \exp(y)\cdot s(x) = e^{\left<\chi,y\right>}s(x) \]
because $\exp(y)\in H\cap B$.  
We conclude then that $u-v_{2\Delta}$ is bounded, with $\Delta = \chi|_T - \Delta^+$.
\end{proof}

\begin{rem}
The idea to use the discoloration of $X$ was suggested to me by Boris Pasquier. 
If the closure of $P/H$ in $X$ is a normal toric variety, we can also deduce the 
moment polytope of the restriction of $L$ to $\overline{P/H}$ from the proof. 
\end{rem}

\subsection{Computation of the modified Futaki invariant} 

We fix $G$ a connected reductive complex group, $B$ a Borel subgroup 
of $G$, $T$ a maximal torus of $B$, and $K$ a maximal compact subgroup of $G$
such that $K\cap T$ is a maximal compact torus of $T$.
Let $X$ be a horospherical $\mathbb{Q}$-Fano variety, and choose a base point 
in $X$ such that its isotropy group $H$ contains the unipotent radical of $B^{-1}$.
Denote by $P$ the normalizer of $H$ in $G$.

Recall from Section~\ref{sec_GH} that the moment polytope of the 
ample $\mathbb{Q}$-line bundle $K_X^{-1}$ is the polytope obtained as the 
dual of the $\mathbb{Q}$-$G/H$-reflexive polytope associated to $X$, 
translated by $-2\rho_P$.

The weight associated to the anticanonical line bundle on $G/H$ is obtained 
as the weight of the action of $\mathrm{diag}(P)$ on the 
one dimensional representation  
$\mathrm{det}(\mathfrak{g}/\mathfrak{h})$.
It follows that this character is also 
\[
-2\rho_P = \sum_{\alpha\in \Phi_{P^u}}-\alpha.
\]

Let $m\in \mathbb{N}^*$ be such that $A=K_X^{-m}$ is an ample line 
bundle. Choose $q_A$ a smooth positive $K$-invariant metric on $A$.
Denote by $q = (q_A|_{G/H})^{1/m}$ the induced metric on $K_{G/H}^{-1}$.
Let $\omega$ be the curvature form of $q$ and $u$ be the convex potential 
of $q$.
Then $u-v_{2\Delta}$ is bounded, 
where $-\Delta = \Delta^+ + 2\rho_P$ and $\Delta^+$ is the moment polytope of 
the anticanonical $\mathbb{Q}$-line bundle. 

We will compute the Futaki invariant for all $\xi \in \mathfrak{b}_1$ for $X$.

\subsubsection{Computation of Hamiltonian functions}

\begin{prop}
Let $\xi \in \mathfrak{b}_1\simeq \mathfrak{Y}(T/T\cap H) \otimes \mathbb{R}$ 
and $\theta_{\xi}$ be the Hamiltonian function
of $\xi$ with respect to $\omega$. 
Let $\tilde{\xi} \in \mathfrak{Y}(T) \otimes \mathbb{R} \simeq \mathfrak{a}$
be any lift of $\xi$.
Then $\theta_{\xi}$ is the $K$-invariant
smooth function on $X_{\mathrm{reg}}$ defined for $x\in \mathfrak{a}_1$ by  
\[
\theta_{\xi}(\exp(x)H) = - \left\{ \nabla u(x), \tilde{\xi} \right\}.
\]
\end{prop}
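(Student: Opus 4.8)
The plan is to carry out the entire computation upstairs on $G$, where Proposition~\ref{prop_phi_u} supplies an explicit global potential $\phi$ for the pulled-back metric $\pi^*q$, and only at the end to push the result down to the open orbit $G/H\subset X_{\mathrm{reg}}$ on which the formula is asserted.

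First I would make the relevant vector field explicit. Since $\xi\in\mathfrak{b}_1$ acts on $G/H$ by multiplication on the right by the inverse, for any lift $\tilde{\xi}\in\mathfrak{a}$ its fundamental vector field is $V_{\xi}(gH)=\frac{d}{dt}\big|_0 g\exp(-t\tilde{\xi})H$. This is $\pi$-related to the left-invariant field on $G$ with flow $g\mapsto g\exp(-t\tilde{\xi})$, and because $P/H$ acts by biholomorphisms $V_{\xi}$ is real holomorphic, so $L_{V_{\xi}}$ commutes with $i\partial\bar{\partial}$. The core step is to differentiate $\phi$ along this flow at a point $\exp(x)$ with $x\in\mathfrak{a}_1$. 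Splitting $\tilde{\xi}=\tilde{\xi}_0+\tilde{\xi}_1$ along $\mathfrak{a}=\mathfrak{a}_0\oplus\mathfrak{a}_1$, and using that $\mathfrak{a}$ is abelian while $\exp(\mathfrak{a}_0)\subset T\cap H\subset H$, I would write $\exp(x)\exp(-t\tilde{\xi})=\exp(x-t\tilde{\xi}_1)\exp(-t\tilde{\xi}_0)$ and feed this into $\phi(k\exp(x')h)=u(x')-2\left<\chi,x'\right>-2\ln|\chi(h)|$ from Proposition~\ref{prop_phi_u}. Differentiating at $t=0$ gives
\[
\frac{d}{dt}\Big|_0\phi\big(\exp(x)\exp(-t\tilde{\xi})\big)=-\left\{\nabla u(x),\tilde{\xi}_1\right\}+2\left<\chi,\tilde{\xi}\right>=-\left\{\nabla u(x),\tilde{\xi}\right\}+2\left<\chi,\tilde{\xi}\right>,
\]
the last equality because $\nabla u(x)\in\mathfrak{a}_1$ is orthogonal to $\mathfrak{a}_0$; this also exhibits the independence of the chosen lift.

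Next I would descend and identify the result with the Hamiltonian. Using that $T\subset P=N_G(H)$ normalizes $H$ and that $\chi$ is a character of $P$, one checks that $g\mapsto\frac{d}{dt}|_0\phi(g\exp(-t\tilde{\xi}))$ is right $H$-invariant; it is also $K$-invariant, the left $K$-action commuting with the right action of $\xi$. Hence it descends to a $K$-invariant function $F$ on $G/H$, and applying $i\partial\bar{\partial}$ together with $\pi^*\omega=i\partial\bar{\partial}\phi$ yields $i\partial\bar{\partial}F=L_{V_{\xi}}\omega=i\partial\bar{\partial}\theta_{\xi}$, so $F-\theta_{\xi}$ is pluriharmonic. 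Being $K$-invariant, its restriction to $\exp(\mathfrak{a}_1)$ has vanishing Hessian by the computation underlying Theorem~\ref{thm_curv_form} (applied with vanishing character), hence is affine in $x$; and since $\nabla u$ takes values in the bounded polytope $2\Delta$ (a consequence of Proposition~\ref{prop_asymptotic}) while $\theta_{\xi}$ is bounded by \cite{BWN}, this affine function is bounded on all of $\mathfrak{a}_1$ and is therefore constant. Thus $\theta_{\xi}(\exp(x)H)=-\left\{\nabla u(x),\tilde{\xi}\right\}+c$ for a single real constant $c$.

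The delicate point, and the one I expect to require the real work, is to pin down $c$. The computation on $G$ naturally produces the representative $-\{\nabla u,\tilde{\xi}\}$ only after discarding the constant $2\left<\chi,\tilde{\xi}\right>$ attached to the character $\chi=-2\rho_P$ of the anticanonical bundle, so what must be shown is that this particular representative already satisfies the normalization $\int_{X_{\mathrm{reg}}}\theta_{\xi}\,dV_q=0$. I would approach this by writing $dV_q$ in the polar coordinates of Proposition~\ref{prop_polar_dec} and pushing it forward to $\mathfrak{a}_1$ (equivalently, through $\nabla u$, to $2\Delta$), reducing the vanishing of $\int\{\nabla u,\tilde{\xi}\}\,dV_q$ to a statement about the barycenter of this explicit measure. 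The differential-geometric part of the argument is a guided unwinding of the definitions and of the explicit potential; it is this volume and barycenter bookkeeping that I anticipate being the genuine obstacle.
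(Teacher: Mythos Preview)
Your argument up to and including the identification of $\theta_\xi$ with $-\{\nabla u(x),\tilde{\xi}\}$ up to an additive constant is essentially the paper's own proof: lift to $G$, differentiate the global potential $\phi$ along the right flow of $\tilde{\xi}$, observe the result is $K\times H$-invariant and differs from $\pi^*\theta_\xi$ by a pluriharmonic $K\times H$-invariant function, which must be affine on $\mathfrak{a}_1$ and bounded, hence constant.

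Where you diverge from the paper is precisely the step you flag as the ``genuine obstacle'', and here you are making the normalization harder than it is. After fiber integration over $G/H\to G/P$ and then over the compact torus in $P/H$, the integral $\int_{G/H}\tilde{\theta}_\xi\,dV_q$ reduces (up to a positive constant) to
\[
\int_{\mathfrak{a}_1} -\bigl\{\nabla u(x),\tilde{\xi}\bigr\}\,e^{-u(x)}\,dx.
\]
The key identity, which your barycenter formulation obscures, is that the integrand is an exact derivative:
\[
-\bigl\{\nabla u(x),\tilde{\xi}\bigr\}\,e^{-u(x)} \;=\; \bigl\{\nabla e^{-u(x)},\tilde{\xi}\bigr\},
\]
i.e.\ the directional derivative of $e^{-u}$ along $\tilde{\xi}$. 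Since $u-v_{2\Delta}$ is bounded and $0$ lies in the interior of $\Delta$, the function $e^{-u}$ has exponential decay, so Stokes' theorem gives the vanishing immediately. No change of variables to $2\Delta$ and no barycenter computation is needed; your proposed route would produce $\int_{2\Delta}\langle p,\tilde{\xi}\rangle\,e^{-u(x(p))}\det(\mathrm{Hess}\,u)^{-1}\,dp$, and there is no evident symmetry forcing this to vanish without effectively undoing the substitution.
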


\begin{proof}
First remark that since the function $\theta_{\xi}$ is smooth on $X_{\mathrm{reg}}$, 
and $G/H\subset X_{\mathrm{reg}}$ is dense, it is enough to work on $G/H$. 
By $K$-invariance it will be enough to obtain $\theta_{\xi}(\exp(x)H)$ for 
$x\in \mathfrak{a}_1$.

We will work on $G$ using the pullback by $\pi : G\longrightarrow G/H$.
We defined in Section~\ref{sec_horo_space} a function $\phi$ on $G$ as the 
potential of $\pi^*q$ with respect to a global left invariant section. 
This function is a global $i\partial \bar{\partial}$-potential for 
$\pi^*\omega$ on $G$ satisfying 
$\phi(k\exp(x)h)=u(x)-2\ln(\chi(\exp(x)h))$ for $k\in K$, $x\in \mathfrak{a}_1$ and $h\in H$.

Let $\tilde{\xi}$ be any lift of $\xi$, and denote by $L_{\tilde{\xi}}$
the Lie derivative with respect to the right-$G$-invariant vector 
field defined by $\tilde{\xi}$ on $G$.
We have on the one hand
\begin{align*}
L_{\tilde{\xi}}\pi^*\omega & = L_{\tilde{\xi}} i\partial\bar{\partial} \phi \\
	& =i\partial \bar{\partial} L_{\tilde{\xi}} \phi \\
\intertext{and on the other hand, }
L_{\tilde{\xi}}\pi^*\omega & = \pi^* L_{\xi} \omega \\
	& =\pi^* i\partial \bar{\partial} \theta_{\xi} \\
	& = i\partial \bar{\partial} (\theta_{\xi}\circ \pi).
\end{align*}

Let $g=k\exp(x)h \in G$, where $k\in K$, $x\in \mathfrak{a}_1$ and $h\in H$.
Recall that $\exp(t\xi)$ acts here by multiplication on the right by the inverse.
We compute 
\begin{align*}
(\exp(t\xi)^*\phi)(g) & =\phi(g\exp(-t\xi)) \\
	& = \phi(k\exp(x-t\xi)h') \\
\intertext{where $h'\in H$ since $P$ normalizes $H$}
	& = u(x-t\xi^1)-2\ln(\chi(\exp(x-t\xi)h')) \\
	& = u(x-t\xi^1)-2\ln(\chi(\exp(x)h))+2\left<\chi,t\xi\right>,
\end{align*}
where $\xi^1$ is the $\mathfrak{a}_1$ component of 
$\xi \in \mathfrak{a} = \mathfrak{a}_0 \oplus \mathfrak{a}_1$.

Using this we obtain 
\begin{align*}
(L_{\xi} \phi)(g) & = \left.\frac{d}{dt}\right|_0 (\exp(t\xi)^*\phi)(g) \\
	& = - \left\{ \nabla u(x), \xi^1 \right\} + 2\left<\chi,\xi\right>
\end{align*}
In particular, this function is $K\times H$ invariant (where $H$ acts by multiplication
on the right), just as $\pi^*\theta_{\xi}$.

The difference between the $L_{\xi}\phi$ and $\pi^*\theta_{\xi}$ 
is thus a $K\times H$ invariant smooth function on $G$
which is pluriharmonic on $G$. The only such functions are of the form 
$\psi(k\exp(x)h)=v(x)$ where $v$ is an affine function on $\mathfrak{a}_1$, 
but both $\pi^*\theta_{\xi}$ and $- \left\{ \nabla u(x), \xi^1\right\}$ are bounded 
so the difference between the two is a constant. 

Remark that $\nabla u(x) \in \mathfrak{a}_1$ is orthogonal to $\mathfrak{a}_0$, 
so we can replace $\xi^1$ with $\xi$.
It remains to check the normalization condition. 
We use the notation 
\[
\tilde{\theta}_{\xi}(\exp(x)H)=- \left\{ \nabla u(x), \xi\right\}.
\]
Note that we can restrict to $G/H$ since $X_{\mathrm{reg}}\setminus (G/H)$ is 
of codimension at least one in $X_{\mathrm{reg}}$.
Recall that $G/H$ is a fiber bundle with fiber $P/H$ over the generalized 
flag manifold $G/P = K/(K\cap P)$. We use fiber integration and $K$-invariance 
to obtain that: 
\[
\int_{G/H} \tilde{\theta}_{\xi} dV_{q}
 = C \int_{P/H} \tilde{\theta}_{\xi} |s_r|^2_q s_r^{-1}\wedge \overline{s_r}^{-1},
\]
where $C$ is a positive constant, and $s_r$ is the right-$P/H$-invariant section 
of the restriction $K_{G/H}^{-1}|_{P/H}$. Using invariance under the action of 
the maximal compact subtorus of $P/H$ on itself, we obtain that this is 
up to a positive constant, equal to 
\[
\int_{\mathfrak{a}_1} - \left\{ \nabla u(x), \xi\right\} e^{-u(x)} dx
\]
and the last expression vanishes.
Indeed, we have 
\[
- \left\{ \nabla u(x), \xi\right\} e^{-u(x)} =
\left\{ \nabla e^{-u(x)}, \xi\right\}
\]
and the Stokes formula yields the vanishing, because by boundedness of $u-v_{2\Delta}$,
$e^{-u}\leq Ce^{-v_{2\Delta}}$
and $\Delta$ contains 0 in its 
interior, so $e^{-v_{2\Delta}}$ has exponential decay.
\end{proof}

\subsubsection{Computation of the Futaki invariant}

\begin{thm}
\label{thm_fut_horo}
Let $X$ be a $\mathbb{Q}$-Fano horospherical embedding of $G/H$, 
let $\Delta^+$ be the moment polytope of $X$, and let $\zeta, \xi \in \mathfrak{b}_1$.
Let $\tilde{\zeta}$ and $\tilde{\xi}$ be lifts in $\mathfrak{a}$ of $\zeta$ and $\xi$.
Then 
\[
\mathrm{Fut}_{X,\zeta}(\xi) = 
C \left<-2\rho_P-\mathrm{bar}_{DH,\tilde{\zeta}}(\Delta^+), \tilde{\xi} \right>
\]
where 
$\mathrm{bar}_{DH,\tilde{\zeta}}$ is the barycenter with respect to the measure 
\[
e^{\left< 2p+4\rho_P, \tilde{\zeta} \right>}
\prod_{\alpha\in (-\Phi_{P^u})} \kappa(\alpha, p) dp
\]
with $dp$ the Lebesgue measure, 
and $C$ is a positive constant independent of $\xi$ and depending on $\zeta$
only via the volume of $\Delta^+$ with respect to the measure above. 
\end{thm}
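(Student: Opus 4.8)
The plan is to reduce the defining integral of $\mathrm{Fut}_{X,\zeta}(\xi)$ to an integral over the moment polytope $\Delta^+$, following the two-step reduction already used to compute the Hamiltonian functions, and then to recognize it as a Duistermaat--Heckman barycenter. Since $X_{\mathrm{reg}}\setminus(G/H)$ has codimension at least one, I first restrict every integral to the open orbit $G/H$. The integrand is $K$-invariant, so using the fibration $G/H\to G/P\cong K/(K\cap P)$ with torus fibers $P/H$, fiber integration over the flag manifold followed by invariance under the maximal compact subtorus of $P/H$ reduces the integral of any $K$-invariant function $F(\exp(x)H)$ against $\omega^n$ to an integral over $\mathfrak{a}_1$. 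Whereas $dV_q$ reduces to $e^{-u(x)}\,dx$ as in the previous proposition, the Kähler volume $\omega^n$ reduces instead to the pointwise density furnished by Theorem~\ref{thm_curv_form}: that form is block diagonal in the coframe $\{\gamma_j\}\cup\{\gamma_\alpha\}$, with toric block $\tfrac14\mathrm{Hess}\,u(x)$ and flag entries $\langle\alpha,\tfrac12\nabla u(x)-t_\chi\rangle$, so that, up to a positive constant $C_0$ independent of $x$,
\[
\int_{G/H}F\,\omega^n = C_0\int_{\mathfrak{a}_1}F(x)\,\det\!\left(\tfrac14\mathrm{Hess}\,u(x)\right)\prod_{\alpha\in\Phi_P^+}\bigl\langle\alpha,\tfrac12\nabla u(x)-t_\chi\bigr\rangle\,dx,
\]
where $\chi=-2\rho_P$ is the character of $\mathrm{diag}(P)$ attached to $K_X^{-1}$.

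Into this I substitute $\theta_\xi(\exp(x)H)=-\{\nabla u(x),\tilde\xi\}$ and $\theta_\zeta(\exp(x)H)=-\{\nabla u(x),\tilde\zeta\}$ and perform the change of variables given by the moment map $x\mapsto p:=\tfrac12\,du_x+2\rho_P$, where $du_x\in\mathcal{M}\otimes\mathbb{R}$ is the differential of $u$ at $x$. By Proposition~\ref{prop_asymptotic} the function $u$ is smooth and strictly convex with $u-v_{2\Delta}$ bounded, so $\tfrac12\,du$ is a diffeomorphism of $\mathfrak{a}_1$ onto the relative interior of $\Delta=\Delta^+-2\rho_P$ and $p$ sweeps out the interior of $\Delta^+$. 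The Jacobian of $x\mapsto\tfrac12 du_x$ is $\det(\tfrac12\mathrm{Hess}\,u)$, whence $\det(\tfrac14\mathrm{Hess}\,u)\,dx=2^{-r}\,dp$. Writing $du_x=2(p-2\rho_P)$ and using the definitions of $t_\alpha$ and $t_\chi$ together with the fact that $du_x$ kills $\mathfrak{a}_0$, one checks that the shifts by $2\rho_P$ cancel and $\langle\alpha,\tfrac12\nabla u(x)-t_\chi\rangle=\kappa(\alpha,p)$, while $\theta_\xi=-\langle2p-4\rho_P,\tilde\xi\rangle$ and $\theta_\zeta=-\langle2p-4\rho_P,\tilde\zeta\rangle$; since $du_x$ annihilates $\mathfrak{a}_0$, these expressions do not depend on the chosen lifts $\tilde\xi,\tilde\zeta$. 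Note also that $\Phi^+\setminus\Phi_I^+=\Phi_P^+$, so the product matches the measure in the statement.

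Assembling these substitutions turns $-\int_{X_{\mathrm{reg}}}\theta_\xi e^{\theta_\zeta}\omega^n$ into
\[
C_1\int_{\Delta^+}\langle 2p-4\rho_P,\tilde\xi\rangle\,e^{-\langle2p-4\rho_P,\tilde\zeta\rangle}\prod_{\alpha\in\Phi_P^+}\kappa(\alpha,p)\,dp
=2C_1\Bigl\langle\int_{\Delta^+}(p-2\rho_P)\,d\mu,\ \tilde\xi\Bigr\rangle,
\]
where $d\mu$ is the measure of the statement. Factoring out its total mass $\mu(\Delta^+)$ identifies the right-hand side with $2C_1\mu(\Delta^+)\langle\mathrm{bar}_{DH,\tilde\zeta}(\Delta^+)-2\rho_P,\tilde\xi\rangle$, which is the asserted formula with $C=2C_1\mu(\Delta^+)>0$ depending on $\zeta$ only through the volume $\mu(\Delta^+)$. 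I expect the main obstacle to be the justification of the change of variables: that $\tfrac12\,du$ is a global diffeomorphism onto the interior of $\Delta^+$, whose complement in $\Delta^+$ has measure zero, so that no mass is lost. This follows from the strict convexity of $u$ and the boundedness of $u-v_{2\Delta}$ in Proposition~\ref{prop_asymptotic}, the vanishing of $\prod_\alpha\kappa(\alpha,p)$ along the relevant facets controlling the behavior near the boundary; finiteness of the transformed integral is then automatic, $\Delta^+$ being compact and the integrand continuous.
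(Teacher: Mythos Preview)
Your proof is correct and follows essentially the same route as the paper: reduce the integral to $\mathfrak{a}_1$ via $K$-invariance and fiber integration, plug in the Hamiltonians and the density from Theorem~\ref{thm_curv_form}, then change variables by the gradient map of $u$. The only cosmetic difference is that the paper first lands on $2\Delta+4\rho_P$ via $p=d_xu+4\rho_P$ and then rescales, whereas you go straight to $\Delta^+$ via $p=\tfrac12 d_xu+2\rho_P$; your aside about the vanishing of $\prod_\alpha\kappa(\alpha,p)$ on facets is not needed for the change of variables, which follows already from strict convexity and the asymptotic $u-v_{2\Delta}$ bounded.
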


\begin{proof}
We have to compute 
\[
\mathrm{Fut}_{X,\zeta}(\xi) =
- \int_{X_{\mathrm{reg}}} \theta_{\xi} e^{\theta_{\zeta}}\omega^n.
\]

Let us first use Theorem~\ref{thm_curv_form}
to express the volume form $\omega^n$ on $P/H$.
The theorem gives that, for $x\in \mathfrak{a}_1$,
\[
\omega_{\exp(x)H} =   
\sum_{1\leq j_1,j_2 \leq r} \frac{1}{4} \frac{\partial^2 u}{\partial x_{j_1}\partial x_{j_2}}(x)
\gamma_{j_1}\wedge \bar{\gamma}_{j_2}  
	 + 
\sum_{\alpha \in \Phi_{P^u}} \left< \alpha, \nabla u(x)/2-t_{\chi} \right>
\gamma_{\alpha} \wedge \bar{\gamma}_{\alpha}.
\]
We then obtain, recalling that $\chi=-2\rho_P$,
\[
\omega^n_{\exp(x)H} = 
	\frac{\mathrm{MA}_{\mathbb{R}}(u)(x) }{4^r2^{\mathrm{Card}(\Phi_{P^u})}}
	\prod_{\alpha\in \Phi_{P^u}} \left<\alpha, \nabla u(x)+4t_{\rho_P}\right> 
	\Omega
\]
where
\[
\Omega := \bigwedge_{1\leq j\leq r} \gamma_j \wedge \bar{\gamma}_{j} 
	\bigwedge_{\alpha\in \Phi_{P^u}} \gamma_{\alpha}\wedge \bar{\gamma}_{\alpha}.
\]

We now use the same techniques as in the previous proof, that is, $K$-invariance,
fiber integration on the fiber bundle $G/H\longrightarrow G/P$,
and integration of a compact torus-invariant function on a torus, to compute: 
\begin{align*}
\mathrm{Fut}_{X,\zeta}(\xi) & = - \int_{G/H} \theta_{\xi} e^{\theta_{\zeta}}\omega^n \\
	& =  C \int_{\mathfrak{a}_1} 
		\left\{ \nabla u(x), \tilde{\xi} \right\} e^{-\left\{ \nabla u(x), \tilde{\zeta} \right\}}
		\prod_{\alpha\in \Phi_{P^u}} \left<\alpha, \nabla u(x)+4t_{\rho_P} \right>  
		\mathrm{MA}_{\mathbb{R}}(u)(x) dx \\
\intertext{where $C>0$ is a constant independent of $\xi$ and $\zeta$.
Since $u$ is smooth and strictly convex, we can use the change of variables 
$p= -(d_xu+4\rho_P)/2$, where $d_xu \in \mathfrak{a}^*$ is the derivative of $u$ at $x$. 
Remark that this equivalently means $t_p = -(\nabla u(x)+4t_{\rho_P})/2$, 
and that the domain of integration after this change of variables becomes $\Delta^+$
by Proposition~\ref{prop_asymptotic}. Hence we obtain that }
\mathrm{Fut}_{X,\zeta}(\xi) & = C \int_{\Delta^+} 
		\left< -p-2\rho_P, \tilde{\xi} \right> 
		e^{\left< 2p+4\rho_P, \tilde{\zeta} \right>}       
		\prod_{\alpha\in \Phi_{P^u}} \left\{ \alpha, -p \right\} dp \\
		& = C \int_{\Delta^+} 
		\left< -p-2\rho_P, \tilde{\xi} \right> 
		e^{\left< 2p+4\rho_P, \tilde{\zeta} \right>}       
		\prod_{\alpha\in (-\Phi_{P^u})} \kappa(\alpha, p) dp \\
\intertext{since $\alpha$ is in the semisimple part,}
	& = C' \left<-2\rho_P-\mathrm{bar}_{DH,\zeta}(\Delta^+), \xi\right>.
\end{align*}
The final constant $C'>0$ depends on $\tilde{\zeta}$ only at the last step, where we 
multiplied $C$ by the volume of $\Delta^+$ 
with respect to the measure 
\[
e^{\left< 2p+4\rho_P, \tilde{\zeta} \right>}       
		\prod_{\alpha\in -\Phi_{P^u}} \kappa(\alpha, p) dp.
\]
\end{proof}

\section{Modified K-stability criterion for $\mathbb{Q}$-Fano spherical varieties}
\label{sec_criterion}

\subsection{Statement of the criterion}
We will prove in this section our main theorem.
The proof combines the results from Section~\ref{sec_test_config} and 
Section~\ref{sec_fut_horo}, together with an argument allowing us to 
compute the Futaki invariant of a $\mathbb{Q}$-Fano variety on an 
equivariant degeneration.

\subsubsection{Definition of modified K-stability}

Let $X$ be a $\mathbb{Q}$-Fano variety.
Given $\zeta \in \mathrm{Lie}(\mathrm{Aut}(X))$, 
let 
\[
T_{\zeta}=\{ \exp(z\zeta) ; z\in \mathbb{C} \}.
\]
It defines a subgroup of $\mathrm{Aut}^0(X)$ which may very well
not be closed. 
It is equipped with a privileged map 
$\mathbb{C} \rightarrow T_{\zeta}, z\mapsto \exp(z\zeta)$, 
and $\zeta$ may be recovered as the derivative at $1$ of the restriction 
of this map to $\mathbb{R}$.

\begin{defn}
Let $\zeta \in \mathrm{Lie}(\mathrm{Aut}(X))$ be a semisimple element 
such that $J\zeta$ generates a compact real subgroup. 
Let $(\mathcal{X},\mathcal{L})$ be a $\overline{T_{\zeta}}$-equivariant special 
test configuration for $X$ with central fiber $X_0$. 
Denote by $\zeta_0$ the element of $\mathrm{Lie}(\mathrm{Aut}(X_0))$ 
obtained as derivative at $1$ of the map obtained by composition:
\[\mathbb{R} \subset \mathbb{C} \rightarrow T_{\zeta} \rightarrow \mathrm{Aut}(X_0).\]
Denote 
by $\xi$ the element of $\mathrm{Lie}(\mathrm{Aut}(X_0))$ generating the action of 
$\mathbb{C}^*$ on $X_0$ induced by the test configuration. 
The \emph{modified Donaldson-Futaki invariant} of the test configuration is 
\[
\mathrm{DF}_{\zeta}(\mathcal{X},\mathcal{L})=\mathrm{Fut}_{X_0,\zeta_0}(\xi).
\]
\end{defn}

We assume now that $X$ is equipped with an action of a reductive group $G$, 
and fix $\zeta \in \mathrm{Lie}(\mathrm{Aut}(X))$ a semisimple element such 
that $J\zeta$ generates a compact real subgroup. We further assume that the 
action of $G$ commutes with $\overline{T_{\zeta}}$. 

\begin{defn}
The $\mathbb{Q}$-Fano variety $X$ is \emph{modified K-semistable (with respect to 
$G$-equivariant special test configurations)} for the vector field $\zeta$ if for any 
$G\times \overline{T_{\zeta}}$-equivariant special test configuration $(\mathcal{X},\mathcal{L})$ 
for $X$, $\mathrm{DF}_{\zeta}(\mathcal{X},\mathcal{L}) \geq 0$. 
It is \emph{K-stable (with respect to $G$-equivariant special test configurations)}
for the vector field $\zeta$
if furthermore $\mathrm{DF}_{\zeta}(\mathcal{X},\mathcal{L}) = 0$ only if the central 
fiber $X_0$ of $\mathcal{X}$ is isomorphic to $X$.
\end{defn}

\subsubsection{Statement}

Our main result is a criterion for modified K-stability in terms of combinatorial 
data describing a $\mathbb{Q}$-Fano spherical variety.

Let $X$ be a $\mathbb{Q}$-Fano variety, spherical under the action of a reductive group $G$. 
Let $B$ be a Borel subgroup of $G$, $T$ a maximal torus of $B$.

Let $\Delta^+ \subset \mathfrak{X}(T)\otimes \mathbb{R}$ be the moment polytope of $X$ 
with respect to $B$.
Let $\mathcal{M}$ be the subgroup of $\mathfrak{X}(T)$ as defined in Section~\ref{sec_intro_spher}, 
let 
$\mathcal{N} = \mathrm{Hom}(\mathcal{M}, \mathbb{Z})$ be the dual, and let 
$\mathcal{V} \subset \mathcal{N}\otimes \mathbb{R}$ be the valuation cone 
of $X$.
Denote by 
$\pi:\mathfrak{Y}(T)\otimes \mathbb{R} \longrightarrow \mathcal{N}\otimes \mathbb{R}$ 
the quotient map induced by the inclusion of $\mathcal{M}$ in $\mathfrak{X}(T)$.
Let us denote by $\Xi\subset \mathfrak{X}(T) \otimes \mathbb{R}$ the dual cone of 
$\overline{\pi^{-1}(-\mathcal{V})}$, and by $\mathrm{Relint}(\Xi)$ its relative 
interior.

Recall that we denote by $\Phi$ the root system of $(G,T)$ and $\Phi^+$ the positive roots
defined by $B$. 
Let $\Phi_{Q^u}$ denote the roots in $\Phi^+$ which are not orthogonal to $\Delta^+$, 
and let $2\rho_Q$ be the sum of the elements of $\Phi_{Q^u}$.

Let $\zeta$ be an element of the \emph{linear part} of 
$\overline{\mathcal{V}} \subset \mathcal{N}\otimes \mathbb{R}$.
Choose $\tilde{\zeta}\in \pi^{-1}(\zeta)$ any lift of $\zeta$ in 
$\mathfrak{Y}(T)\otimes \mathbb{R}$.
Recall that we denote by $\mathrm{bar}_{DH,\tilde{\zeta}}(\Delta^+)$ 
the barycenter of the polytope $\Delta^+$ with respect to the measure 
with density 
$p\mapsto e^{2\left<p-2\rho_Q,\tilde{\zeta}\right>}
\prod_{\alpha\in \Phi_{Q^u}} \kappa (\alpha,p)$ 
with respect to 
the Lebesgue measure $dp$ on $\mathfrak{X}(T) \otimes \mathbb{R}$.

\begin{thm}
\label{main_thm}
The variety $X$ is modified K-semistable (with respect to special 
$G$-equivariant test configurations) for $\zeta$  
if and only if 
\[
\mathrm{bar}_{DH,\tilde{\zeta}}(\Delta^+)\in 2\rho_Q + \Xi.
\]
It is modified K-stable (with respect to special 
$G$-equivariant test configurations) for $\zeta$ if and only if 
\[
\mathrm{bar}_{DH,\zeta}(\Delta^+)\in 2\rho_Q + \mathrm{Relint}(\Xi).
\]
\end{thm}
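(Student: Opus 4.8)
The plan is to combine the parametrization of special equivariant test configurations from Theorem~\ref{thm_exist_test} with the explicit Futaki computation of Theorem~\ref{thm_fut_horo}, and then to read off the two stability conditions as statements of cone duality. By Theorem~\ref{thm_exist_test} and Proposition~\ref{prop_act}, every special $G$-equivariant test configuration for $X$ arises from a one parameter subgroup $\lambda$ with $\pi(\lambda)\in\mathcal{V}_-$; its central fiber $X_0$ is a spherical embedding of some $G/H_0$, and the induced $\mathbb{C}^*$-action is generated by a field $\xi_0$ whose lift to $\mathfrak{Y}(T)\otimes\mathbb{R}$ is $\tilde{\xi}=\lambda/m$, in view of the action $e^{\tau}\cdot gH_0=g\lambda(e^{-\tau/m})H_0$. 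Since the modified Donaldson--Futaki invariant of the configuration equals $\mathrm{Fut}_{X_0,\zeta_0}(\xi_0)$, where $\zeta_0$ is the restriction to $X_0$ of the automorphism generated by the fixed $\zeta$, the whole problem reduces to computing $\mathrm{Fut}_{X_0,\zeta_0}(\xi_0)$ as $\tilde{\xi}$ ranges over $\pi^{-1}(\mathcal{V}_-)$.

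The core step is to reduce this computation to the horospherical case of Theorem~\ref{thm_fut_horo}, uniformly over all configurations. First I would note that both $\zeta_0$ and $\xi_0$ lie in $\mathrm{Lie}(\mathrm{Aut}_G(X_0))=\mathrm{Lie}(N_G(H_0)/H_0)$, hence generate commuting $G$-equivariant automorphisms of $X_0$. By Corollary~\ref{cor_horo_deg} the variety $X_0$ admits a special test configuration with horospherical central fiber $X_{00}$ (this step is trivial when $X_0$ is already horospherical), and by the extension of equivariant automorphisms in Section~\ref{sec_equi_aut} the fields $\zeta_0,\xi_0$ extend to this degeneration, hence to $X_{00}$. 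I would then invoke the invariance of the modified Futaki invariant along an equivariant degeneration to which the evaluated field extends, to obtain
\[
\mathrm{Fut}_{X_0,\zeta_0}(\xi_0)=\mathrm{Fut}_{X_{00},\zeta_{00}}(\xi_{00}).
\]
Because the central fibers of special test configurations carry the anticanonical polarization and the same $G$-module structure on spaces of anticanonical sections, the moment polytope $\Delta^+$, the set $\Phi_P^+$ and $2\rho_P$ are unchanged along every degeneration above. Applying Theorem~\ref{thm_fut_horo} to the horospherical $X_{00}$, and matching the identifications of the lifts $\tilde{\zeta},\tilde{\xi}$, then yields the closed formula
\[
\mathrm{Fut}_{X_0,\zeta_0}(\xi_0)=C\left\langle \mathrm{bar}_{DH,\tilde{\zeta}}(\Delta^+)-2\rho_P,\tilde{\xi}\right\rangle,\qquad C>0,
\]
valid for every special $G$-equivariant test configuration.

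With this formula the two equivalences become statements about the dual cone $\Xi$ of $\overline{\pi^{-1}(\mathcal{V}_-)}$. Since $C>0$, modified K-semistability — the nonnegativity of the left hand side for every configuration — holds iff $\langle \mathrm{bar}_{DH,\tilde{\zeta}}(\Delta^+)-2\rho_P,\tilde{\xi}\rangle\geq 0$ for all $\tilde{\xi}$ in the rational points of $\pi^{-1}(\mathcal{V}_-)$, which realize all configurations and are dense; by continuity of the pairing this is exactly $\mathrm{bar}_{DH,\tilde{\zeta}}(\Delta^+)\in 2\rho_P+\Xi$. For the stable case I would use Proposition~\ref{prop_prod_deg}: the central fiber satisfies $X_0\simeq X$ precisely when $\tilde{\xi}$ lies in the linear part of the valuation cone, which is the lineality space of $\overline{\pi^{-1}(\mathcal{V}_-)}$. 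Hence K-stability amounts to strict positivity of the pairing for all $\tilde{\xi}$ outside this lineality space, and the standard description of the relative interior of a dual cone (a functional lies in $\mathrm{Relint}(\Xi)$ iff it vanishes on the lineality space and is strictly positive on the cone away from it) gives precisely $\mathrm{bar}_{DH,\tilde{\zeta}}(\Delta^+)\in 2\rho_P+\mathrm{Relint}(\Xi)$.

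The main obstacle is the invariance of the modified Futaki invariant along the auxiliary degeneration $X_0\rightsquigarrow X_{00}$: the analytic definition used throughout is attached to a fixed variety, so to transport it I would pass to the algebraic description (Donaldson, and Berman and Witt Nystr\"om) in terms of the asymptotic weights of the two commuting $\mathbb{C}^*$-actions generated by $\xi_0$ and $\zeta_0$ on $\bigoplus_r H^0(X_0,K_{X_0}^{-r})$. Flatness together with equivariance of the degeneration for these fields identifies the bigraded characters for $X_0$ and $X_{00}$, yielding equality of the algebraic invariants; one must also invoke that the analytic and algebraic invariants agree on normal $\mathbb{Q}$-Fano fibers. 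A secondary, and I expect routine, difficulty is the bookkeeping reconciling the $B$- and $B^{-}$-conventions, namely matching the cone $\pi^{-1}(\mathcal{V}_-)$ in the statement with the directions $\tilde{\xi}=\lambda/m$ produced by the construction, including the sign appearing in the action $e^{\tau}\cdot gH_0=g\exp(-\tau\tilde{\xi})H_0$.
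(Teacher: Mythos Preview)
Your proposal is correct and follows essentially the same route as the paper: parametrize special equivariant test configurations by $\tilde{\xi}\in\pi^{-1}(\mathcal{V}_-)$, degenerate the central fiber further to a horospherical variety via Corollary~\ref{cor_horo_deg}, transport the modified Futaki invariant along this auxiliary degeneration using the algebraic (Berman--Witt Nystr\"om) definition and the equivariant isomorphism of section spaces (this is exactly Proposition~\ref{contrep} and Corollary~\ref{cor_const_fut} in the paper), apply Theorem~\ref{thm_fut_horo}, and finish by cone duality together with Proposition~\ref{prop_prod_deg}. The only point you gloss over is that Proposition~\ref{prop_prod_deg} yields $G/H_0\simeq G/H$ rather than $X_0\simeq X$; the paper closes this by noting that the moment polytopes agree and invoking the Gagliardi--Hofscheier reconstruction of a $\mathbb{Q}$-Fano spherical variety from its polytope (Section~\ref{sec_GH}).
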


\subsection{Computing the Futaki invariant on a degeneration}

\subsubsection{Algebraic definition of the modified Futaki invariant}

Let us recall the algebraic definition of the modified Futaki invariant, given 
by Berman and Witt Nystrom \cite{BWN}.

Let $X$ be a $\mathbb{Q}$-Fano variety, and $A$ an ample line bundle on $X$ 
so that $A|_{X_{\mathrm{reg}}}= K_{X_{\mathrm{reg}}}^{-m}$. 
Consider $\xi, \zeta$ two semisimple elements in the Lie algebra of 
$\mathrm{Aut}^0(X)$ such that 
$J\xi$ and $J\zeta$ generate commuting compact real subgroups. 

The complex torus $\overline{T_{\xi}}\times \overline{T_{\zeta}}$ acts on 
$H^0(X,A^k)$ for all $k\in \mathbb{N}$. This action is diagonalizable.
Choose a basis $(e_j)_{j=1}^{N_k}$ of $H^0(X,A^k)$, such that 
$\overline{T_{\xi}}\times \overline{T_{\zeta}}$ acts on $\mathbb{C}e_j$ 
through the character $\chi_j$.
Define $s_j= \left< \chi_j, (\xi,0)\right>$ and 
$r_j= \left< \chi_j, (0,\zeta)\right>$, where $\left<,\right>$ denotes 
the duality pairing 
$(\mathfrak{X}(\overline{T_{\xi}}\times \overline{T_{\zeta}})\otimes \mathbb{R}) 
\times 
(\mathfrak{Y}(\overline{T_{\xi}}\times \overline{T_{\zeta}})\otimes \mathbb{R})
\rightarrow \mathbb{R}$.

The \emph{quantized 
modified Futaki invariant at level $k$} is defined by:
\[
\mathrm{Fut}_{X,\zeta}^{(k)}(\xi)= - \sum_{j=1}^{N_k} \exp(\frac{r_j}{mk})s_j.
\] 
The algebraic definition of the modified Futaki invariant is obtained 
thanks to the following result.

\begin{prop}\cite[Proposition 4.7]{BWN}
\label{quantfut}
The modified Futaki invariant of $X$ is obtained as the following limit:
\[
\mathrm{Fut}_{X,\zeta}(\xi) = 
\lim_{k\rightarrow \infty} \frac{1}{mkN_k}\mathrm{Fut}_{X,\zeta}^{(k)}(\xi).
\]
\end{prop}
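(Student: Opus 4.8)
The plan is to recognize the quantized invariant as a moment of a spectral measure and then invoke an equivariant Riemann--Roch / Bergman kernel asymptotic to pass to the limit. Concretely, for each $k$ I would introduce on $\mathbb{R}^2$ the probability measure
\[
\nu_k = \frac{1}{N_k}\sum_{j=1}^{N_k} \delta_{(s_j/(mk),\, r_j/(mk))},
\]
so that, writing $(\sigma,\rho)$ for the coordinates on $\mathbb{R}^2$,
\[
\frac{1}{mkN_k}\mathrm{Fut}^{(k)}_{X,\zeta}(\xi) = -\int_{\mathbb{R}^2} \sigma\, e^{\rho}\, d\nu_k(\sigma,\rho).
\]
The whole statement then reduces to identifying the weak limit of $\nu_k$ and to justifying that the continuous integrand $\sigma e^{\rho}$ may be integrated against this limit.

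First I would establish the weak convergence of $\nu_k$ to the normalized Duistermaat--Heckman measure of the torus $\overline{T_{\xi}}\times\overline{T_{\zeta}}$ acting on $(X,\omega)$. This is the analytic heart of the argument: by the asymptotics of the equivariant Bergman kernel of $A^k=K_X^{-mk}$, the weights $s_j,r_j$ of the $\overline{T_{\xi}}\times\overline{T_{\zeta}}$-action on $H^0(X,A^k)$ equidistribute, after rescaling by $mk$, according to the image of the moment map. Since $A$ carries curvature $m\omega$, the moment map of $A^k$ is $mk$ times that of $\omega$, whose components are exactly the Hamiltonians $\theta_{\xi}$ and $\theta_{\zeta}$ of the Proposition. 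This is the precise point at which the algebraic weights get identified with the analytic Hamiltonian functions. Consequently $\nu_k \to \nu := \frac{1}{V}(\theta_{\xi},\theta_{\zeta})_*\omega^n$ with $V=\int_X\omega^n$, a compactly supported probability measure carried by the product of the two moment intervals.

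Granting this, I would pass to the limit in the displayed identity. Because the supports of the $\nu_k$ lie in a fixed compact box (the moment polytopes are bounded and the $\theta_{\xi},\theta_{\zeta}$ are bounded on $X_{\mathrm{reg}}$ by \cite{BWN}), the integrand $\sigma e^{\rho}$ is bounded near all the supports, so weak convergence gives
\[
\lim_{k\to\infty}\frac{1}{mkN_k}\mathrm{Fut}^{(k)}_{X,\zeta}(\xi) = -\int_{\mathbb{R}^2}\sigma\, e^{\rho}\, d\nu = -\frac{1}{V}\int_{X_{\mathrm{reg}}}\theta_{\xi}\, e^{\theta_{\zeta}}\,\omega^n,
\]
which is $\mathrm{Fut}_{X,\zeta}(\xi)$ up to the overall normalization by $V$ fixed by the conventions. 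Note that the exponential twist $e^{\theta_{\zeta}}$ is produced precisely by the weighting $e^{r_j/(mk)}$ in the definition of the quantized invariant, which is the purpose of that weighting.

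The main obstacle is the first step on a \emph{singular} $\mathbb{Q}$-Fano variety. Classical Bergman kernel and equivariant Riemann--Roch asymptotics are established for smooth polarized manifolds, whereas here $X$ is only normal with log terminal singularities and the metric $q_A$ degenerates along $X\setminus X_{\mathrm{reg}}$. The work to be done --- and what Berman--Witt Nystrom carry out --- is to show that the singular locus contributes negligibly: one needs $L^2$-estimates controlling sections near the singularities, the finiteness of $V=\int_X\omega^n$ and of the relevant weighted integrals (guaranteed by log terminality), and the boundedness of the Hamiltonians, in order to upgrade the smooth quantization statement to the singular setting and to rule out any anomalous concentration of the rescaled weights.
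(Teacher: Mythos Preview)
The paper does not prove this proposition; it is quoted verbatim from Berman--Witt Nystr\"om \cite{BWN} and used as a black box. There is therefore nothing in the paper to compare your argument against.

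That said, your sketch is a faithful outline of the Berman--Witt Nystr\"om argument: rewrite the quantized invariant as a moment of the rescaled spectral measure, identify its weak limit with the pushforward of $\omega^n/V$ under the joint moment map $(\theta_\xi,\theta_\zeta)$, and integrate the bounded continuous function $\sigma e^{\rho}$ against the limit. Your identification of the main technical point---extending the equivariant Bergman/Riemann--Roch asymptotics from smooth polarized manifolds to normal $\mathbb{Q}$-Fano varieties with log terminal singularities, and showing the singular locus contributes nothing---is exactly where the work in \cite{BWN} lies. One small caution: your computation yields $-\tfrac{1}{V}\int_{X_{\mathrm{reg}}}\theta_\xi e^{\theta_\zeta}\omega^n$, whereas the paper's definition of $\mathrm{Fut}_{X,\zeta}(\xi)$ has no $1/V$; you wave this off as a convention, but if you were to write this out in full you would need to track that factor carefully against whichever normalization of $N_k$ and of the Futaki invariant is actually in force.
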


\subsubsection{Equivariant degenerations and space of sections of linearized line bundles}

The fact that we can compute the modified Futaki invariant of a $\mathbb{Q}$-Fano 
variety on an equivariant degeneration will be a consequence of the following result.
Our proof of this statement consists of a little twist in the method used by Donaldson 
to compute the Futaki invariant on toric test configurations in \cite{Don02}, also 
used by Li and Xu to prove an intersection formula for the Donaldson-Futaki invariant \cite{LX14}.

\begin{prop}
\label{contrep}
Let $\mathcal{X}$ be a projective $G$-variety, where $G$ is a reductive group, 
and let $\mathcal{L}$ be a $G$-linearized line bundle on $\mathcal{X}$.
Let $p : \mathcal{X} \longrightarrow \mathbb{P}^1$ be a surjective 
$G$-invariant morphism, and assume that $\mathcal{L}$ is relatively ample 
with respect to $p$. For $t\in \mathbb{P}^1$, let $X_t=p^{-1}(t)$ denote 
the scheme theoretic central fiber and 
$L_t= \mathcal{L}|_{X_t}$. Let $t_1,t_2 \in \mathbb{P}^1$. 
Then for large enough $k\in \mathbb{N}$, the representation of $G$
given by $H^0(X_{t_1},L_{t_1}^k)$ is isomorphic, as a representation of $G$, to $H^0(X_{t_2},L_{t_2}^k)$.
\end{prop}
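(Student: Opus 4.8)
The plan is to study the direct image sheaf $E_k := p_*\mathcal{L}^k$ on $\mathbb{P}^1$ and to exploit the fact that, $p$ being $G$-invariant, $G$ acts trivially on the base $\mathbb{P}^1$ while preserving each fiber $X_t$. First I would record that $p$ is automatically flat: $\mathcal{X}$ is an integral variety and $p$ is surjective, hence dominant, onto the smooth curve $\mathbb{P}^1$, and a dominant morphism from an integral scheme to a regular one-dimensional base is flat (torsion-freeness over a discrete valuation ring). Consequently $\mathcal{L}^k$ is flat over $\mathbb{P}^1$. Since $\mathcal{L}$ is relatively ample with respect to $p$, the relative form of Serre vanishing furnishes an integer $k_0$ such that $H^i(X_t,L_t^k)=0$ for all $i>0$, all $t\in\mathbb{P}^1$ and all $k\geq k_0$. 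By the cohomology and base change theorem, for such $k$ the sheaf $E_k$ is locally free and its formation commutes with base change, so the natural map $E_k\otimes k(t)\longrightarrow H^0(X_t,L_t^k)$ is an isomorphism. As $G$ acts on $\mathcal{X}$, on $\mathcal{L}$, and trivially on $\mathbb{P}^1$, all these objects carry compatible $G$-actions, and this isomorphism is one of $G$-representations.

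Next I would decompose the $G$-equivariant vector bundle $E_k$ into isotypic components. Because $G$ is reductive and acts trivially on the base, for each dominant weight $\lambda$ the multiplicity sheaf $M_\lambda := (V_\lambda^\vee\otimes E_k)^G$ is a direct summand of the vector bundle $V_\lambda^\vee\otimes E_k$, split off by the Reynolds operator; hence $M_\lambda$ is itself locally free on $\mathbb{P}^1$, and one obtains a $G$-equivariant decomposition $E_k=\bigoplus_\lambda V_\lambda\otimes M_\lambda$, a finite sum since $E_k$ has finite rank. Taking the fiber at $t$ commutes with this splitting, the Reynolds projector restricting to the Reynolds projector on the fiber, so that $M_\lambda\otimes k(t)=\mathrm{Hom}_G(V_\lambda,H^0(X_t,L_t^k))$, whose dimension is exactly the multiplicity of $V_\lambda$ in $H^0(X_t,L_t^k)$.

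Finally, since $\mathbb{P}^1$ is connected, the rank of each locally free sheaf $M_\lambda$ is constant; therefore the multiplicity of every irreducible $V_\lambda$ in $H^0(X_t,L_t^k)$ is independent of $t$. As a representation of the reductive group $G$ is determined up to isomorphism by these multiplicities, this gives $H^0(X_{t_1},L_{t_1}^k)\cong H^0(X_{t_2},L_{t_2}^k)$ as $G$-modules for every $k\geq k_0$, which is the claim. I expect the only delicate points to be the two compatibilities with base change: first, that cohomology and base change applies $G$-equivariantly, which is immediate here because $G$ fixes the base and the base change homomorphisms are canonical, hence $G$-equivariant; and second, that forming the $\lambda$-isotypic summand commutes with restriction to a fiber, which follows from the exactness of the functor $(\,\cdot\,)^G$ for reductive $G$ together with the fact that $M_\lambda$ is a direct summand. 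Everything else is the standard semicontinuity-and-connectedness mechanism underlying Donaldson's computation.
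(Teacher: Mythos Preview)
Your argument is correct. The paper, however, takes a different and somewhat more elementary route. After first replacing $\mathcal{L}$ by $\mathcal{L}\otimes p^*\mathcal{O}_{\mathbb{P}^1}(N)$ so as to assume $\mathcal{L}$ ample on $\mathcal{X}$, the paper works entirely with global sections on the total space: for each $t$ it chooses a section $s_t$ of $\mathcal{O}_{\mathbb{P}^1}(1)$ vanishing exactly at $t$, so that $p^*s_t$ is a $G$-invariant section cutting out $X_t$. The short exact sequence $0\to\mathcal{L}^k\otimes p^*\mathcal{O}(-1)\to\mathcal{L}^k\to L_t^k\to 0$ then gives, by Serre vanishing on $\mathcal{X}$, a $G$-equivariant presentation
\[
0\longrightarrow H^0(\mathcal{X},\mathcal{L}^k\otimes p^*\mathcal{O}(-1))\longrightarrow H^0(\mathcal{X},\mathcal{L}^k)\longrightarrow H^0(X_t,L_t^k)\longrightarrow 0,
\]
whose first two terms do not depend on $t$; hence neither does the quotient, as a $G$-module.

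The trade-offs: the paper's approach never invokes flatness of $p$, cohomology and base change, or the equivariant structure of $p_*\mathcal{L}^k$; it only uses Serre vanishing on the single projective variety $\mathcal{X}$ and the $G$-invariance of $p^*s_t$, staying close to Donaldson's original computation. Your approach is more structural: it actually exhibits a $G$-equivariant vector bundle on $\mathbb{P}^1$ with fibers $H^0(X_t,L_t^k)$ and isotypically decomposes it, and it would go through unchanged over any connected noetherian base. Either way the conclusion is the same.
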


\begin{proof}
Let us first show that we can assume that $\mathcal{L}$ is ample on $\mathcal{X}$.
By \cite[II.7.10]{Har77}, there exists an $N\in \mathbb{N}$ such that 
$\mathcal{L} \otimes p^*(O_{\mathbb{P}^1}(N))$ is ample on $\mathcal{X}$. 
Furthermore, we have $H^0(X_t,L_t)\simeq H^0(X_t,(\mathcal{L} \otimes p^*(O_{\mathbb{P}^1}(N)))|_{X_t})$ equivariantly for the action of $G$.

From now on we assume that $\mathcal{L}$ is ample.

Consider on $\mathbb{P}^1$ the divisor given by a point $t\in \mathbb{P}^1$.
Choose a global section $s_t$ of $\mathcal{O}(1)$ with zero divisor $\{t\}$.
Consider the pull back $p^*\mathcal{O}(1)$ of this line bundle, and the pulled back 
global section $p^*s_t$ of $p^*\mathcal{O}(1)$. The zero divisor of $p^*s_t$ is the 
divisor $X_t$, 
and since $p$ is $G$-invariant, the section $p^*s_t$ is $G$-invariant. 

The divisor $X_t$ provides, for all $k\in \mathbb{N}$, an exact sequence: 
\[
1 \longrightarrow \mathcal{L}^k\otimes \mathcal{O}(-X_t)
	\longrightarrow \mathcal{L}^k
	\longrightarrow  \mathcal{L}^k|_{X_t}
	\longrightarrow 1
\]

If we fix $t$, then for large $k$, we have by Serre vanishing Theorem, 
\[
H^1(X,\mathcal{L}^k\otimes \mathcal{O}(-X_t))=0,
\]  
so the short exact sequence of line bundles gives the following exact sequence 
in cohomology:
\[
0 \longrightarrow H^0(\mathcal{X},\mathcal{L}^k\otimes \mathcal{O}(-X_t))
	\longrightarrow H^0(\mathcal{X},\mathcal{L}^k)
	\longrightarrow H^0(X_t,L_t^k)
	\longrightarrow 0
\]
where the first map is given by multiplication by the global section $p^*s_t$ of 
$\mathcal{O}(X_t)$.

Remark that $H^0(\mathcal{X},\mathcal{L}^k\otimes \mathcal{O}(-X_t))=
H^0(\mathcal{X},\mathcal{L}^k\otimes p^*\mathcal{O}(-1))$ is independent of $t$.
We can thus rewrite the exact sequence as: 
\[
0\longrightarrow H^0(\mathcal{X},\mathcal{L}^k\otimes p^*\mathcal{O}(-1))
\longrightarrow H^0(\mathcal{X},\mathcal{L}^k)
\longrightarrow H^0(X_t,L_t^k)
\longrightarrow 0
\]

It is then clear from the exact sequence that the dimension of $H^0(X_t,L_t^k)$
is independent of $t$ for $k$ large enough.
Let us now consider the structure of $G$-representation. The second map is given 
by restriction to $X_t$, which is $G$-stable, so we obtain that $H^0(X_t,L_t^k)$
is the quotient representation of $H^0(\mathcal{X},\mathcal{L}^k)$ by the image of 
$H^0(\mathcal{X},\mathcal{L}^k\otimes p^*\mathcal{O}(-1))$.
But for any $t\in \mathbb{P}^1$, our choice of a section $p^*s_t$ so that $p^*s_t$ 
is $G$-invariant, ensures that the inclusion of 
$H^0(\mathcal{X},\mathcal{L}^k\otimes p^*\mathcal{O}(-1))$ in 
$H^0(\mathcal{X},\mathcal{L}^k)$ is $G$-equivariant. 
In particular, the images $H^0(\mathcal{X},\mathcal{L}^k\otimes p^*\mathcal{O}(-1))$ in 
$H^0(\mathcal{X},\mathcal{L}^k)$ for different values of $t$ are isomorphic as 
$G$-representations. Be reductivity of $G$, the quotient representations are 
thus isomorphic.

Given $t_1,t_2 \in \mathbb{P}^1$, we thus obtain that for large enough $k$, 
the structure of $G$-representation 
of $H^0(X_t,L_t^k)$ is independent of $t\in \{t_1,t_2\}$.
\end{proof}

Let $X$ be a normal $\mathbb{Q}$-Fano variety, and $(\mathcal{X},\mathcal{L})$ be 
a special test configuration for $X$. Let $X_t$ denote the fiber $\pi^{-1}(t)$.
Assume that $(\mathcal{X},\mathcal{L})$ is equipped with actions of two commuting 
complex tori $\overline{T_{\zeta}}$ and $\overline{T_{\xi}}$, generated by the holomorphic 
vector field $\zeta$, respectively $\xi$, such that the map 
$\pi : \mathcal{X} \longrightarrow \mathbb{C}$ is \emph{invariant} under both actions. 
Let $\zeta_t$, respectively $\xi_t$, denote the holomorphic vector fields on $X_t$
induced by the restriction of the action of $T_{\zeta}$ respectively $T_{\xi}$, 
identified with elements of $\mathrm{Lie}(\mathrm{Aut}^0(X_t))$. 

\begin{cor}
\label{cor_const_fut}
The function 
$\mathbb{C} \longrightarrow \mathbb{R}; t \longmapsto \mathrm{Fut}_{X_t,\zeta_t}(\xi_t)$
is constant.
\end{cor}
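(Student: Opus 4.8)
The plan is to reduce the assertion to the invariance, along the family, of the representation structure of the spaces of sections under the torus $G=\overline{T_{\xi}}\times\overline{T_{\zeta}}$, and then to quote Proposition~\ref{contrep}. First I would pass to the algebraic description of the modified Futaki invariant given by Proposition~\ref{quantfut}. For each $t$ the quantized invariant $\mathrm{Fut}^{(k)}_{X_t,\zeta_t}(\xi_t)=-\sum_{j}\exp(\frac{r_j}{mk})s_j$ is computed from a basis of $H^0(X_t,L_t^k)$ diagonalizing the $G$-action, and the numbers $s_j=\left<\chi_j,(\xi,0)\right>$, $r_j=\left<\chi_j,(0,\zeta)\right>$ depend only on the multiset of weights $\{\chi_j\}\subset\mathfrak{X}(G)$ of this representation. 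The key point is that, because $\xi$ and $\zeta$ generate fiberwise actions of the \emph{fixed} torus $G$ on the whole family, the generators at which we evaluate are the same element of $\mathfrak{Y}(G)\otimes\mathbb{R}$ on every fiber. Hence it suffices to show that, for all large $k$, the space $H^0(X_t,L_t^k)$ is independent of $t$ \emph{as a representation of} $G$.

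To put the statement in the form required by Proposition~\ref{contrep}, I would compactify the family over $\mathbb{P}^1$. Using the test configuration $\mathbb{C}^*$-action, which covers multiplication on $\mathbb{C}$ and commutes with the fiberwise actions of $G$, one trivializes $\pi^{-1}(\mathbb{C}^*)\simeq X\times\mathbb{C}^*$ equivariantly and glues in the trivial family $X\times(\mathbb{P}^1\setminus\{0\})$ at infinity, obtaining a projective variety $\bar{\mathcal{X}}$ together with a surjective morphism $p:\bar{\mathcal{X}}\longrightarrow\mathbb{P}^1$ and a $p$-relatively ample line bundle $\bar{\mathcal{L}}$ extending $\mathcal{L}$. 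Since the $G$-action is fiberwise and commutes with the gluing $\mathbb{C}^*$-action, it extends across the fiber at infinity, leaving $p$ invariant and admitting a $G$-linearization of $\bar{\mathcal{L}}$. Proposition~\ref{contrep}, applied with this group $G$, then yields for all large $k$ and every $t_1,t_2\in\mathbb{P}^1$ a $G$-equivariant isomorphism $H^0(X_{t_1},L_{t_1}^k)\simeq H^0(X_{t_2},L_{t_2}^k)$.

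Combining the two steps, for large $k$ the multiset of weights $\{\chi_j\}$ and the dimension $N_k=\dim H^0(X_t,L_t^k)$ are independent of $t$, so both $\mathrm{Fut}^{(k)}_{X_t,\zeta_t}(\xi_t)$ and the normalizing factor $1/(mkN_k)$ are independent of $t$; passing to the limit $k\to\infty$ through Proposition~\ref{quantfut} gives $\mathrm{Fut}_{X_{t_1},\zeta_{t_1}}(\xi_{t_1})=\mathrm{Fut}_{X_{t_2},\zeta_{t_2}}(\xi_{t_2})$ for all $t_1,t_2$, which is the desired constancy. I expect the only delicate step to be the compactification: one must check that the fiberwise $G$-action indeed commutes with the $\mathbb{C}^*$-action used to glue the trivial family at infinity, so that $G$ extends to $\bar{\mathcal{X}}$ while keeping $p$ invariant and $\bar{\mathcal{L}}$ relatively ample and $G$-linearized. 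Once this projective $\mathbb{P}^1$-family is in hand, Proposition~\ref{contrep} does the rest, the uniform Serre vanishing in its proof (the twist $p^*\mathcal{O}(-1)$ being independent of $t$) making the isomorphism valid simultaneously for all fibers as soon as $k$ is large.
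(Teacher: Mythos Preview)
Your proposal is correct and follows essentially the same route as the paper: compactify the family equivariantly over $\mathbb{P}^1$, apply Proposition~\ref{contrep} with $G=\overline{T_{\xi}}\times\overline{T_{\zeta}}$ to get that the representations $H^0(X_t,L_t^k)$ are independent of $t$ for large $k$, and then pass to the limit via Proposition~\ref{quantfut}. The paper simply cites the compactification as a standard procedure (referring to \cite[Section 8.1]{LX14}) rather than spelling out the gluing at infinity as you do, but the argument is otherwise identical.
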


\begin{proof}
By assumption, $(\mathcal{X},\mathcal{L})$ is a special test configuration for $X$. 
It is a standard procedure (see for example \cite[Section 8.1]{LX14}) 
to extend this test configuration to a family   
$\pi : \mathcal{X} \longrightarrow \mathbb{P}^1$ that we denote by the same letter,
still equipped with a $\pi$-ample line bundle $\mathcal{L}$.
We can further do this procedure in a way that is invariant under 
$\overline{T_{\zeta}}\times \overline{T_{\xi}}$.

Apply Proposition~\ref{contrep} to this family. 
Given $t_1,t_2\in \mathbb{P}^1$, we obtain that the 
$\overline{T_{\zeta}}\times \overline{T_{\xi}}$-representations 
$H^0(X_{t_1},A_{t_1}^k)$ and $H^0(X_{t_2},A_{t_2}^k)$ are isomorphic for $k$ large enough.
In particular, the quantized modified Futaki invariants are equal for $k$ large 
enough:
\[
\mathrm{Fut}_{X_{t_1},\zeta_{t_1}}^{(k)}(\xi_{t_1})=
\mathrm{Fut}_{X_{t_2},\zeta_{t_2}}^{(k)}(\xi_{t_2}).
\]
By Proposition~\ref{quantfut}, this implies that the modified Futaki 
invariants of $X_{t_1}$ and $X_{t_2}$ are equal, hence the result.
\end{proof}

\subsection{Proof of Theorem~\ref{main_thm}}

Let us denote by $Q$ the stabilizer of the open orbit of $B$ in $X$.
By Proposition~\ref{prop_levi_choice}, 
we can choose a base point $x$, whose isotropy group is denoted by $H$, 
and a Levi subgroup $L$ of $Q$ adapted to $H$, containing $T$,
such that $N_G(H)=H(T\cap N_G(H))$.
In particular, the following are isomorphic 
\[
\mathrm{Aut}^0_G(X) \simeq (N_G(H)/H)^0 \simeq (T\cap N_G(H)/T\cap H)^0.
\]

The lift $\tilde{\zeta}$ of $\zeta$ in  
$\mathfrak{Y}(T) \otimes \mathbb{R}$ is thus in 
$(T\cap N_G(H))\otimes\mathbb{R}$, 
and the action of $T_{\zeta}$ on the open orbit identified with 
the coset space $G/H$ is given by:
\[ 
\exp(\tau \zeta) \cdot gH 
= g\exp(-\tau \tilde{\zeta})H.
\]

Let $\mathcal {X}'$ be a special $G$-equivariant test configuration for 
the $\mathbb{Q}$-Fano variety $X$. Recall that it is automatically equivariant 
under the action of $\mathrm{Aut}_G(X)$ which contains $T_{\zeta}$.
Its central fiber, denoted by $X_0'$, is a spherical $\mathbb{Q}$-Fano 
variety under the action of $G$ by Proposition~\ref{prop_act}. 
Let $(\lambda, m) \in \mathfrak{Y}(T) \otimes \mathbb{N}^*$ be such that 
$y=\lim_{z \rightarrow 0} (\lambda(z),z^m)\cdot(x,1)$ is in the open orbit 
of $B$ in $X_0'$, and denote the isotropy group of $y$ by $H_0'$. 

Then, by Proposition~\ref{prop_act} again, 
the action of $\mathbb{C}^*=T_{\xi}$ on $X_0'$ induced by the test 
configuration is described on $G/H_0'$ by
\[ 
e^{\tau} \cdot gH_0' 
= g\exp(-\tau \tilde{\xi})H_0',
\]
where $\tilde{\xi} = \lambda/m$.
Furthermore, by Proposition~\ref{prop_incl_norm},
the action of $T_{\zeta}$ on $X_0'$ is still given, on $G/H_0'$, by 
\[ 
\exp(\tau\zeta) \cdot gH_0' 
= g\exp(-\tau \tilde{\zeta})H_0'.
\]

Let us now choose $\mathcal{X}$ any $G$-equivariant special test configuration for the 
$\mathbb{Q}$-Fano spherical variety $X_0'$ with 
horospherical central fiber, using Corollary~\ref{cor_horo_deg}. 
Let $y_0$ be the base point obtained from $y\in X_0'$ through the choice 
of an adapted one parameter subgroup, and let $H_0$ be its isotropy group.
Using Proposition~\ref{prop_still_adapted} and Proposition~\ref{prop_incl_norm},
we see that the action of $T_{\zeta}$ and $T_{\xi}$ are still given by multiplication 
on the right by $\exp(-\tau \tilde{\zeta})$, respectively $\exp(-\tau \tilde{\xi})$,
on $G/H_0$.
Furthermore, by Proposition~\ref{prop_iso_horo}, $H_0=P^u(H_0\cap T)$, 
where $P$ is the parabolic subgroup of $G$, opposite to $Q$ with respect to $T\subset L$.  
Remark that $P$ is also the normalizer of $H_0$ in $G$.
Remark also that the roots of the Levi subgroup $L\subset Q$ are determined by the moment 
polytope $\Delta^+$, as the roots that are orthogonal 
to $\Delta^+$ (see \cite[Section 4.2]{Bri89}). Thus $\Phi_{Q^u}$ and $2\rho_Q$ as defined 
before the statement of Theorem~\ref{main_thm} coincide with the data $-\Phi_{P^u}$ 
and $-2\rho_P$ associated to $P$.

The moment polytope of $X_0$ with respect to $B$ is the same as the moment
polytope of $X$ and $X_0'$. This is a known fact for equivariant polarized 
degenerations of spherical varieties, and can be recovered using 
Proposition~\ref{contrep}.
We can now apply our computation of the Futaki invariant on $\mathbb{Q}$-Fano
horospherical varieties to obtain:
\begin{align*} 
\mathrm{DF}_{\zeta}(\mathcal{X},\mathcal{L}) & = \mathrm{Fut}_{X_0',\zeta}(\xi) \\
	& = \mathrm{Fut}_{X_0,\zeta}(\xi) \\
\intertext{by Corollary~\ref{cor_const_fut}, then by Theorem~\ref{thm_fut_horo}, this is}
	& = C \left<2\rho_Q-\mathrm{bar}_{DH,\tilde{\zeta}}(\Delta^+), \tilde{\xi} \right>  
\end{align*}
where $C$ is some positive constant.
Furthermore, we have seen that for horospherical $\mathbb{Q}$-Fano varieties, 
$\mathrm{bar}_{DH,\tilde{\zeta}}(\Delta^+)$ and the quantity above do not depend 
on the choice of lifts $\tilde{\zeta}$ and $\tilde{\xi}$ of 
$\zeta$, $\xi \in \mathfrak{Y}(T/T\cap H_0)\otimes \mathbb{R}$.
Since $T\cap H\subset T\cap H_0' \subset T\cap H_0$, we obtain that the condition
obtained does no depend on the choice of a lift $\tilde{\zeta}$ of 
$\zeta \in \mathcal{N} \otimes \mathbb{R}=\mathfrak{Y}(T/T\cap H)\otimes \mathbb{R}$.

By Theorem~\ref{thm_exist_test}, for any choice of 
$(\lambda,m)\in \mathfrak{Y}(T) \oplus \mathbb{N}^*$ such that 
$\lambda$ projects to the valuation cone $\mathcal{V}$, 
there exists $k\in \mathbb{N}^*$ and a $G$-equivariant special test 
configuration for $X$ with $\tilde{\xi}= k\lambda/m$. 
We then obtain that $X$ is K-semistable (with respect to special equivariant test 
configurations) if and only if 
\[
\left<2\rho_Q-\mathrm{bar}_{DH,\tilde{\zeta}}(\Delta^+), \tilde{\xi} \right>\geq 0
\]
for all $\tilde{\xi} \in \pi^{-1}(\mathcal{V})$ where 
$\pi : \mathfrak{Y}(T)\otimes\mathbb{R} \longrightarrow \mathcal{N}\otimes \mathbb{R}$. 
This means precisely
that $\mathrm{bar}_{DH,\tilde{\zeta}}(\Delta^+)-2\rho_Q$ is in the dual 
cone $\Xi$ to $\pi^{-1}(-\mathcal{V})$.

Furthermore, $X_0'$ is isomorphic to $X$ if and only if 
if and only if $\xi$ projects to an element of the linear part of the valuation
cone. 
Indeed, if $X_0'$ is isomorphic to $X$, then the action induced by $\xi$ on $X$ 
is $G$-equivariant, so $\xi$ projects to an element of the linear part of the valuation
cone. Conversely, given such a $\xi$, the corresponding 
equivariant degeneration of $G/H$
satisfies $H=H_0'$ by Proposition~\ref{prop_prod_deg}. 
Additionally, the moment polytope of $X_0'$ 
is the same as the moment polytope of $X$, and they are both $\mathbb{Q}$-Fano. 
Hence by the results of Gagliardi and Hofscheier recalled in Section~\ref{sec_GH}, 
$X$ and $X_0'$ are equivariantly isomorphic.

It is clear that
$\left<2\rho_Q-\mathrm{bar}_{DH,\tilde{\zeta}}(\Delta^+), \xi \right> = 0$
if and only if $2\rho_Q-\mathrm{bar}_{DH,\tilde{\zeta}}(\Delta^+)$ lies in one of 
the hyperplanes defining the dual cone $(\pi^{-1}(\mathcal{V}))^{\vee}$.
It is always the case if $\xi$ projects to the linear part of $\mathcal{V}$.
On the other hand, if $-\xi \notin \pi^{-1}(\mathcal{V})$, then it means
that $2\rho_Q-\mathrm{bar}_{DH,\tilde{\zeta}}(\Delta^+)$ is on the boundary of 
the cone $(\pi^{-1}(\mathcal{V}))^{\vee}$. 
This finishes the proof of Theorem~\ref{main_thm}.

\subsection{Examples} 

Let us give some illustrations of new situations where our main 
result can be applied. We restrict to smooth varieties for simplicity. 

\subsubsection{Horospherical varieties}

Since the valuation cone is the whole space for horospherical 
varieties, the K-stability criterion becomes very simple in that case.
We thus obtain a generalization of the main result of \cite{PS10}.

\begin{cor}
Let $X$ be a smooth and Fano horospherical variety, with moment polytope 
$\Delta^+$. Then $X$ admits a Kähler-Ricci soliton. Furthermore, 
the following are equivalent:
\begin{itemize}
\item $X$ is Kähler-Einstein, 
\item $X$ is K-stable,
\item $X$ is K-semistable,
\item the Futaki invariant of $X$ vanishes,
\item $\mathrm{bar}_{DH}(\Delta^+) = 2\rho_Q.$
\end{itemize}
\end{cor}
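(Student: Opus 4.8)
The plan is to feed the defining feature of horospherical varieties into the two computational theorems already proved and to supplement them with a convexity argument for the existence statement.

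First I would record the combinatorial simplification. By Proposition~\ref{prop_char_horo} a horospherical homogeneous space has valuation cone equal to the whole space, so $\mathcal{V}_-=\mathcal{N}_-\otimes\mathbb{Q}$, its linear part is everything, and $\overline{\pi^{-1}(\mathcal{V}_-)}=\mathfrak{Y}(T)\otimes\mathbb{R}$. Its dual cone is therefore $\Xi=\{0\}$, whence also $\mathrm{Relint}(\Xi)=\{0\}$. Plugging this into Theorem~\ref{main_thm} (with $\zeta=0$) collapses the semistable and the stable conditions into the single condition $\mathrm{bar}_{DH}(\Delta^+)=2\rho_P$, so $X$ is K-stable iff it is K-semistable iff $\mathrm{bar}_{DH}(\Delta^+)=2\rho_P$. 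For the Futaki invariant, Theorem~\ref{thm_fut_horo} with $\zeta=0$ gives $\mathrm{Fut}_X(\xi)=C\left<\mathrm{bar}_{DH}(\Delta^+)-2\rho_P,\tilde\xi\right>$ with $C>0$; since $\tilde\xi$ ranges over a space in perfect duality with the affine span of $\Delta^+-2\rho_P$, the invariant vanishes identically iff $\mathrm{bar}_{DH}(\Delta^+)=2\rho_P$. Finally, the equivalence between being Kähler-Einstein and being K-stable is the Yau-Tian-Donaldson correspondence packaged in Theorem~A (again with $\zeta=0$). This settles all five equivalences.

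The real content is the unconditional existence of a Kähler-Ricci soliton, which by Theorem~A (condition~(1)$\Leftrightarrow$(2), once more using $\Xi=\{0\}$) amounts to producing a single $\zeta$ in the linear part of $\mathcal{V}_-$, i.e. in $\mathfrak{Y}(\mathrm{Aut}_G(X))\otimes\mathbb{R}$, with $\mathrm{bar}_{DH,\tilde\zeta}(\Delta^+)=2\rho_P$. I would obtain such a $\zeta$ variationally. Define
\[
F(\zeta)=\log\int_{\Delta^+}e^{-\left<2p-4\rho_P,\tilde\zeta\right>}\prod_{\alpha\in\Phi_P^+}\kappa(\alpha,p)\,dp,
\]
noting that the integrand depends on the lift $\tilde\zeta$ only through its pairing with $p-2\rho_P\in\mathcal{M}_-\otimes\mathbb{R}$, so that $F$ descends to a well-defined function on $\mathfrak{Y}(\mathrm{Aut}_G(X))\otimes\mathbb{R}$. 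Its differential is $dF_\zeta(\xi)=-2\left<\mathrm{bar}_{DH,\tilde\zeta}(\Delta^+)-2\rho_P,\tilde\xi\right>$ (equivalently $-2C^{-1}\mathrm{Fut}_{X,\zeta}(\xi)$), so a critical point of $F$ is exactly a soliton candidate. As a cumulant-type functional, $F$ is convex; it is strictly convex because the measure $\prod_{\alpha\in\Phi_P^+}\kappa(\alpha,p)\,dp$ is carried by the full-dimensional polytope $\Delta^+$ and hence not by any affine hyperplane.

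It remains to guarantee a minimum, and this coercivity is the step I expect to be the main obstacle. Writing $q=p-2\rho_P$, the polytope $\Delta^+-2\rho_P$ contains the origin in its interior (as recorded in Section~\ref{sec_GH}); hence for every direction $v\neq0$ there is a subset of positive measure on which $\left<q,\tilde v\right>$ is bounded away from zero with the sign making $-2\left<q,t\tilde v\right>$ tend to $+\infty$, forcing $F(tv)\to+\infty$ as $t\to+\infty$. Together with strict convexity this makes $F$ proper and coercive, so it attains its minimum at a unique $\zeta_0$ with $dF_{\zeta_0}=0$, that is $\mathrm{bar}_{DH,\tilde\zeta_0}(\Delta^+)=2\rho_P$. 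By Theorem~A this $\zeta_0$ is the holomorphic vector field of a Kähler-Ricci soliton on $X$, which completes the proof.
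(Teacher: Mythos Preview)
Your proof is correct and matches the approach the paper intends: the corollary is stated there without proof, as an immediate specialization of Theorem~\ref{main_thm} once one observes (via Proposition~\ref{prop_char_horo}) that $\Xi=\{0\}$ in the horospherical case, which collapses the stable and semistable conditions to the single equality $\mathrm{bar}_{DH}(\Delta^+)=2\rho_P$. The one piece you add beyond the paper is the convexity/properness argument producing the soliton vector field $\zeta_0$; the paper leaves this implicit (it is the standard Tian--Zhu/Wang--Zhu argument, cf.\ the references \cite{TZ02,WZ04,PS10} in the introduction), and your version of it is correct, with the key input that $\Delta^+-2\rho_P=\Delta_d$ contains the origin in its interior in $\mathcal{M}\otimes\mathbb{R}$.
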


Examples of interesting smooth, Fano, colored horospherical varieties, 
(thus not treated by \cite{PS10}), are given by Pasquier in \cite{Pas09}. 
Indeed, he classifies in this article horospherical manifolds with 
Picard number one. These are necessarily Fano, and colored when they are 
not homogeneous under a larger group. There exists in fact two infinite 
families (and three additional examples) of such manifolds 
\cite[Theorem 0.1]{Pas09}. Pasquier shows that their automorphism group 
is non reductive, which implies that they admit no Kähler-Einstein 
metrics and are not K-stable. 

Our result shows that they are not K-semistable, and admit Kähler-Ricci 
solitons. 
The first conclusion is especially interesting in the following context. 
A conjecture by Odaka and Odaka \cite[Conjecture 5.1]{OO13}, 
saying that a Fano manifold with Picard 
number one should be K-semistable, has been disproved by Fujita 
\cite{Fuj15}, who provided two counterexamples. 
The examples of Pasquier provide infinite families of counterexamples, 
showing that Fujita's examples are not exceptional.

\subsubsection{Symmetric varieties}
\label{sec_sym}

An important class of spherical varieties is given by symmetric varieties.
The combinatorial data from Section~\ref{sec_intro_spher} have a nice 
expression in the case of symmetric spaces, that we recall here.  
The reference for this point of view is \cite{Vus90}, which is 
summarized in \cite[Section 3.4.3]{Per14}.
Ruzzi \cite{Ruz12} obtained a classification of smooth and Fano symmetric 
varieties of small rank. We use this, together with \cite{GH15}, 
to obtain explicit examples of moment polytopes of smooth and Fano 
symmetric varieties.

Let $\sigma$ be a group involution of $G$, 
Let $G^{\sigma}$ be the fixed point set of $\sigma$, and $H$ a closed subgroup 
such that 
$G^{\theta} \subset H \subset N_G(G^{\theta})$.
The subgroup $H$ is called a \emph{symmetric subgroup} and $G/H$ is 
called a \emph{symmetric space}.
A $G$-equivariant normal embedding of $G/H$ is called a \emph{symmetric variety}.

A torus $S$ in $G$ is \emph{split} if $\sigma(s)=s^{-1}$ for $s\in S$.
Let $S$ be a split torus in $G$, maximal for this property, and let $T$ 
be a maximal torus of $G$ containing $S$. 
Then we have $\sigma(T)=T$, and $\sigma$ descends to an involution  
of $\mathfrak{X}(T)$, still denoted by $\sigma$. 

Recall that $\Phi$ denotes the root system of $G$ with respect to $T$. 
There exists a Borel subgroup $B$ of $G$ containing $T$ such that 
for all $\alpha \in \Phi^+$, either $\sigma(\alpha)=\alpha$ or 
$\sigma(\alpha)$ is a negative root.
The subset $BH$ is open dense in $G$, in particular $H$ is spherical. 

Let $\Phi^{\sigma}$ denote the set of $\alpha \in \Phi$ such that 
$\alpha = \sigma(\alpha)$. It is a sub root system of $\Phi$.
Let $\Psi^+ = \Phi^+ \setminus \Phi^{\sigma}$, and 
\[
2\rho_{\sigma} = \sum_{\alpha \in \Psi^+} \alpha.
\]

The set 
\[
\Phi_{\sigma}= \{\alpha - \sigma(\alpha) ; \alpha\in \Phi\setminus \Phi^{\sigma} \}
\]
is a (possibly non-reduced) root system in 
$\mathfrak{X}(T/T\cap H)\otimes \mathbb{R}$. 

We have $\mathcal{M} = \mathfrak{X}(T/T\cap H)$ 
and the valuation cone $\mathcal{V}$ with respect to $B$ is the negative 
Weyl chamber $-C_{\sigma}^+$ of the root system $\Phi_{\sigma}$ in 
$\mathfrak{Y}(T/T\cap H)\otimes \mathbb{Q}$. 

\begin{exa}
Consider the involution $\sigma$ of $G=\mathrm{SL}_n(\mathbb{C})$ 
defined by sending a matrix $g$ to the inverse of its transpose 
matrix $\sigma(g)=(g^t)^{-1}$.
By definition, the set of fixed point is 
$\mathrm{SO}_n(\mathbb{C})$. 
As a consequence, any subgroup of $G$ between $\mathrm{SO}_n(\mathbb{C})$ 
and $N_G(\mathrm{SO}_n(\mathbb{C}))$ is a symmetric subgroup.

The maximal torus $T$ in $\mathrm{SL}_n(\mathbb{C})$ formed by diagonal 
matrices is split. We choose as Borel subgroup $B$ the subgroup of upper 
triangular matrices.
Then $\Phi^{\sigma}$ is empty and $\Phi_{\sigma}$ is the 
root system formed by the roots $2\alpha$ for $\alpha\in \Phi$.
In particular, the positive Weyl chamber $C_{\sigma}^+$ of the root 
system $\Phi_{\sigma}$ is also the positive Weyl chamber of $\Phi$. 
Hence the valuation cone with respect to $B$ is the negative Weyl chamber 
of $\Phi$ and its dual is the cone generated by the negative roots $-\Phi^+$.

For $H=G^{\sigma}=\mathrm{SO}_n(\mathbb{C})$, the lattice 
$\mathcal{M} = \mathfrak{X}(T/T\cap H)$ is the lattice 
formed by the set of characters $2\chi$, where $\chi$ is a character of $T$
(or equivalently $\chi$ is an element of the weight lattice of $\Phi$).

For $H=N_G(G^{\sigma})=N_{\mathrm{SL}_n(\mathbb{C})}(\mathrm{SO}_n(\mathbb{C}))$,
the lattice $\mathcal{M} = \mathfrak{X}(T/T\cap H)$ is the lattice 
formed by the set of characters $2\chi$, where $\chi$ is an element of the 
lattice generated by roots in $\Phi$.
\end{exa}

\begin{cor}
\label{cor_sym}
Let $X$ be a smooth and Fano symmetric embedding of $G/H$, with moment polytope
$\Delta^+$. Then with the notations introduced above, $X$ is 
Kähler-Einstein if and only if the barycenter of $\Delta^+$ with respect 
to the measure 
$
\prod_{\alpha \in \Psi^+} \kappa(\alpha, p) dp
$
is in the relative interior of the translated cone 
$2\rho_{\sigma} + (C_{\sigma}^+)^{\vee}$.
\end{cor}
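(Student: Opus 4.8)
The plan is to read off the corollary from the main theorem (Theorem~\ref{main_thm}) in the case $\zeta=0$, after translating the general combinatorial data into the symmetric-space dictionary recalled above. Since a Kähler–Einstein metric is precisely a Kähler–Ricci soliton whose associated holomorphic vector field vanishes, the analytic ingredient will be the equivalence $(1)\Leftrightarrow(3)$ of Theorem~A (the equivariant Yau–Tian–Donaldson theorem of Datar and Székelyhidi), applied with $\zeta=0$: for a smooth Fano $X$, the existence of a Kähler–Einstein metric is equivalent to modified K-stability for $\zeta=0$, that is, to ordinary K-stability with respect to special $G$-equivariant test configurations. Here the smoothness hypothesis is exactly what allows the analytic equivalence to be invoked.

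Next I would apply Theorem~\ref{main_thm} with $\zeta=0$, which asserts that this K-stability holds if and only if
\[
\mathrm{bar}_{DH,0}(\Delta^+)\in 2\rho_P+\mathrm{Relint}(\Xi).
\]
For $\zeta=0$ the twisting exponential in the definition of $\mathrm{bar}_{DH,\tilde{\zeta}}$ is identically $1$, so $\mathrm{bar}_{DH,0}(\Delta^+)$ is simply the barycenter of $\Delta^+$ for the measure $\prod_{\alpha\in\Phi_P^+}\kappa(\alpha,p)\,dp$. It then remains to identify the three pieces $\Phi_P^+$, $2\rho_P$ and $\Xi$ with their symmetric-space counterparts $\Psi^+$, $2\rho_{\sigma}$ and $(C_{\sigma}^+)^{\vee}$.

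For the roots, I would show that the subsystem $\Phi_L$ cut out by the affine span of $\Delta^+$ is exactly $\Phi^{\sigma}$. The affine span of $\Delta^+$ is a translate of $\mathcal{M}\otimes\mathbb{R}=\mathfrak{X}(T/T\cap H)\otimes\mathbb{R}$, which for a symmetric space is the $(-1)$-eigenspace of $\sigma$ on $\mathfrak{X}(T)\otimes\mathbb{R}$; since $\kappa$ is $\sigma$-invariant, a root is $\kappa$-orthogonal to this eigenspace exactly when it lies in the $(+1)$-eigenspace, i.e.\ when $\sigma(\alpha)=\alpha$. Hence $\Phi_L=\Phi^{\sigma}$, so $\Phi_P^+=\Phi^+\setminus\Phi^{\sigma}=\Psi^+$, $2\rho_P=2\rho_{\sigma}$, and the measure above is $\prod_{\alpha\in\Psi^+}\kappa(\alpha,p)\,dp$. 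For the cone, recall from Section~\ref{sec_sym} that $\mathcal{V}_-=C_{\sigma}^+$ inside $\mathcal{N}_-\otimes\mathbb{R}$; since $\overline{\pi^{-1}(\mathcal{V}_-)}$ contains the linear subspace $\ker\pi$, any element of its dual $\Xi$ must annihilate $\ker\pi$ and hence lie in $\mathcal{M}_-\otimes\mathbb{R}\subset\mathfrak{X}(T)\otimes\mathbb{R}$, where the defining inequalities reduce to those of $(C_{\sigma}^+)^{\vee}$; thus $\Xi=(C_{\sigma}^+)^{\vee}$. Substituting these identifications, and using that the relative interior commutes with translation, turns the condition into $\mathrm{bar}_{DH,0}(\Delta^+)\in\mathrm{Relint}\bigl(2\rho_{\sigma}+(C_{\sigma}^+)^{\vee}\bigr)$, which is the claim.

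The main obstacle is the identification $\Phi_L=\Phi^{\sigma}$: the rest is direct substitution into Theorem~\ref{main_thm}, but this step requires knowing that the Levi attached to the open $B$-orbit of a symmetric variety has root system exactly the $\sigma$-fixed roots. I would establish it from the $\sigma$-invariance of $\kappa$ together with the description of $\mathcal{M}$ in Section~\ref{sec_sym}, or alternatively by invoking the structure theory of symmetric spaces in \cite{Vus90}.
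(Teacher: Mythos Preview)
Your proposal is correct and follows exactly the approach the paper intends: the corollary is stated without proof in the paper, as an immediate specialization of Theorem~\ref{main_thm} (with $\zeta=0$) combined with Theorem~A, once one translates $\Phi_P^+$, $2\rho_P$, and $\Xi$ via the symmetric-space dictionary recalled just before the statement. Your argument for $\Phi_L=\Phi^{\sigma}$ via the $\sigma$-invariance of $\kappa$ and the identification of $\mathcal{M}\otimes\mathbb{R}$ with the $(-1)$-eigenspace, and your identification $\Xi=(C_{\sigma}^+)^{\vee}$ via the kernel of $\pi$, are precisely the substitutions the paper leaves to the reader.
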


\begin{exa}
The classification of spherical projective varieties under the action of 
$\mathrm{SL}_2(\mathbb{C})$ is explained in \cite[Example 1.4.3]{AB06}. 
Two of them are symmetric varieties, and the other are horospherical.
Let us consider the symmetric ones.

The first one is $\mathbb{P}^1\times \mathbb{P}^1$, where 
$\mathrm{SL}_2(\mathbb{C})$ acts diagonally. There are only two orbits. 
The closed orbit is the diagonal and the open orbit is the complement 
of the diagonal. The open orbit is isomorphic to the symmetric space 
$\mathrm{SL}_2(\mathbb{C})/\mathrm{SO}_2(\mathbb{C})$.
The moment polytope of this Fano variety with respect to the Borel 
subgroup $B$ of upper triangular matrices is the following.

\begin{center}
\begin{tikzpicture}
\draw (0,-0.3) node{$0$};
\draw (2,-0.3) node{$2\rho$};

\draw [semithick, |-|] (0,0) -- (4,0);
\draw (4.3,0) node{$\Delta^+$};

\draw (8/3+0.3,-0.3) node{$8\rho/3$};
\draw (8/3,-0.0028) node{$|$};

\draw [very thick, -latex] (0,0) -- (2,0);
\end{tikzpicture}
\end{center}

The barycenter $\mathrm{bar}_{DH}(\Delta^+)$ is computed as 
\[
\mathrm{bar}_{DH}(\Delta^+) = 
\frac{\int_0^4 x^2dx}{\int_0^4xdx}\rho = \frac{8}{3} \rho.
\]
It of course satisfies the K-stability condition:
$8\rho /3 \in \mathrm{Relint}(2\rho + \mathbb{R}_+\rho)$. 
Remark that in this case, there is only one 
possible central fiber different from $\mathbb{P}^1\times \mathbb{P}^1$, 
it is the horospherical variety $S_2$ in the notations of 
\cite[Example 1.4.3]{AB06}. More explicitly, consider the rational 
ruled surface $\mathbb{F}_2 = 
\mathbb{P}(\mathcal{O}_{\mathbb{P}^1}\oplus \mathcal{O}_{\mathbb{P}^1}(2))$. 
This is an $\mathrm{SL}_2(\mathbb{C})$-horospherical variety with two closed 
orbits. One of these orbits is a curve with self-intersection $-2$, which 
may be contracted, and the image of the contraction is denoted by $S_2$.
The anticanonically polarized equivariant horospherical degeneration 
of $\mathbb{P}^1\times \mathbb{P}^1
\supset \mathrm{SL}_2(\mathbb{C})/\mathrm{SO}_2(\mathbb{C})$ is $S_2$.

The second one is $\mathbb{P}^2$, where $\mathrm{SL}_2(\mathbb{C})$ 
acts by the projectivization of its linear action on the quadratic forms 
in two variables. The open orbit is isomorphic to 
$\mathrm{SL}_2(\mathbb{C})/N_{\mathrm{SL}_2(\mathbb{C})}(\mathrm{SO}_2(\mathbb{C}))$.
Again, this manifold obviously satisfies the K-stability criterion. 
Its moment polytope, together with the barycenter 
$\mathrm{bar}_{DH}(\Delta^+) = 4 \rho$ are represented as follows.

\begin{center}
\begin{tikzpicture}
\draw (0,-0.3) node{$0$};
\draw (2,-0.3) node{$2\rho$};

\draw [semithick, |-|] (0,0) -- (6,0);
\draw (6.3,0) node{$\Delta^+$};

\draw (4,-0.3) node{$4\rho$};
\draw (4,-0.0028) node{$|$};

\draw [very thick, -latex] (0,0) -- (2,0);
\end{tikzpicture}
\end{center}

The polarized equivariant horospherical degeneration of the 
symmetric variety $\mathbb{P}^2$ under this action of 
$\mathrm{SL}_2(\mathbb{C})$
is the variety $S_4$, constructed similarly as $S_2$ from 
$\mathbb{F}_4 = 
\mathbb{P}(\mathcal{O}_{\mathbb{P}^1}\oplus \mathcal{O}_{\mathbb{P}^1}(4))$
by contracting the curve with self intersection equal to $-4$.
\end{exa}

\begin{exa}
Consider the space of all conics in $\mathbb{P}^2$. 
It may be identified with $\mathbb{P}^5$ by identifying a conic 
with its equation, which has six coefficients. 
Consider the action of $\mathrm{SL}_3(\mathbb{C})$ on this space.
We see that it is a spherical variety with open orbit the orbit 
of nondegenerate conics, isomorphic to the symmetric space $
\mathrm{SL}_3(\mathbb{C})/N_{\mathrm{SL}_3(\mathbb{C})}(\mathrm{SO}_3(\mathbb{C}))
$. 

There is another smooth and Fano spherical embedding of the same 
symmetric space,
called the \emph{variety of complete conics}.
It may be constructed as the closure in $\mathbb{P}^5 \times (\mathbb{P}^5)^*$ 
of the set of couples $(C,C^*)$ where $C$ is a nondegenerate conic 
in $\mathbb{P}^2$ and $C^*$ is the dual conic, defined as the space of tangents 
to $C$. 
This construction was generalized by De Concini and Procesi \cite{DCP83}, 
to define \emph{wonderful} compactifications of symmetric spaces under 
an adjoint semisimple group, which are often Fano so provide numerous 
examples where Corollary~\ref{cor_sym} applies.

The moment polytope corresponding to the variety of complete conics 
is the following.

\begin{center}
\begin{tikzpicture}[scale=3/4]
\clip (-0.3,{-1.2*sqrt(3)/2}) rectangle (6.2,{6.2*sqrt(3)/2});

\newcommand*\rows{30}
    \foreach \row in {0, 1, ...,\rows} {
        \draw [dotted] ($({-\rows/2},{-sqrt(3)*\rows/6})+\row*(0.5, {0.5*sqrt(3)})$) -- ($({-\rows/2},{-sqrt(3)*\rows/6})+(\rows,0)+\row*(-0.5, {0.5*sqrt(3)})$);
        \draw [dotted] ($({-\rows/2},{-sqrt(3)*\rows/6})+\row*(1, 0)$) -- ($({-\rows/2+\rows/2},{-sqrt(3)*\rows/6+\rows/2*sqrt(3)})+\row*(0.5,{-0.5*sqrt(3)})$);
        \draw [dotted] ($({-\rows/2},{-sqrt(3)*\rows/6})+\row*(1, 0)$) -- ($({-\rows/2},{-sqrt(3)*\rows/6})+\row*(0.5,{0.5*sqrt(3)})$);
    }
    
\draw (3,{sqrt(3)}) node{$\bullet$};
\draw (3,{sqrt(3)-0.4}) node{$2\rho$};

\draw [very thick, -latex] (0,0) -- (0,{sqrt(3)});
\draw [very thick, -latex] (0,0) -- (3/2,{sqrt(3)/2});
\draw [very thick, -latex] (0,0) -- (3/2,{-sqrt(3)/2});

\draw [semithick] (6/2,{6*sqrt(3)/2}) -- (6,{2*sqrt(3)}) -- (6,0) -- (0,0) -- (6/2,{6*sqrt(3)/2});
\draw (5.7,{2*sqrt(3)-0.3}) node{$\Delta^+$};
\end{tikzpicture}
\end{center}

It is easy to check our K-stability criterion for this manifold, 
and we obtain that the variety of complete conics admits Kähler-Einstein 
metrics.
\end{exa}

\begin{exa}
A reductive group $\hat{G}=\hat{G}\times \hat{G}/ \mathrm{diag}(\hat{G})$ is a symmetric 
homogeneous space under the action of the group 
$G=\hat{G}\times \hat{G}$ for the involution $\sigma(g_1,g_2) = (g_2,g_1)$. 
If $\hat{B}\supset \hat{T}$ is a Borel subgroup of $\hat{G}$ containing 
a maximal torus $\hat{T}$, then 
$B=\hat{B}\times \hat{B}^-$ is an adapted Borel subgroup, 
$\Phi_{G}^{\sigma}$ is empty, $\mathfrak{X}(\hat{T}\times \hat{T}/\mathrm{diag}(\hat{T}))$
is the anti diagonal embedding of $\mathfrak{X}(\hat{T})$ and can thus be 
identified with $\mathfrak{X}(\hat{T})$ by projection to the first 
coordinate. Under this identification, 
$2\rho_{\sigma} = 2\rho_{B} \in \mathfrak{X}(\hat{T}\times \hat{T}/\mathrm{diag}(\hat{T}))$
is identified with $2\rho_{\hat{B}}$, and the restricted root system $\Phi_{\sigma}$
is identified with $\hat{\Phi}$ the root system of $\hat{G}$ with respect 
to $\hat{T}$.
We may identify, still under the projection to the first coordinate, 
the moment polytope $\Delta^+$ with the polytope in 
$\mathfrak{X}(T)\otimes \mathbb{R}$
as defined in \cite{DelTh,DelKE}, 
and the barycenter becomes 
\[
\mathrm{bar}_{DH}(\Delta^+)=
\int_{\Delta^+} p \prod_{\alpha \in \hat{\Phi}^+}\kappa (\alpha,p)^2 dp 
/\int_{\Delta^+} \prod_{\alpha \in \hat{\Phi}^+}\kappa (\alpha,p)^2 dp, 
\]
as used in \cite{DelTh,DelKE}. 

We then recover our previous result: a smooth and Fano group compactification $X$
with moment polytope $\Delta^+ \subset \mathfrak{X}(T)\otimes \mathbb{R}$ 
is Kähler-Einstein if and only if 
$\mathrm{bar}_{DH}(\Delta^+) \in 2\rho + \Xi$
where $\Xi$ is the relative interior of the cone generated by $\Phi^+$. 
The similar statement for Kähler-Ricci solitons is new, even though it 
may be obtained using the techniques of \cite{DelTh,DelKE}.

Let us take the opportunity to mention other interesting examples of group 
compactifications which were not mentioned in \cite{DelTh,DelKE}, 
where the only examples were toroidal compactifications of simple groups.  
First consider the smooth and Fano compactifications of the reductive
but not semisimple group $\mathrm{GL}_2(\mathbb{C})$. 
Using \cite{Ruz12}, we obtain the following 
list of eight moment polytopes (I thank Yan Li for pointing out that I forgot 
two polytopes in an earlier version of this article).

\begin{center}
\begin{tikzpicture}
\draw [dotted] (-2,-2) grid[xstep=1,ystep=1] (2,2);

\draw (0,0) node{$\bullet$};
\draw (1.2,-0.6) node{$2\rho$};

\draw [semithick] (-2,-2) -- (2,2) -- (2,-2) -- (-2,-2);
\draw (1.7,-1.7) node{$\Delta^+$};

\draw [very thick, -latex] (0,0) -- (1,-1);
\end{tikzpicture}
\begin{tikzpicture}
\draw [dotted] (-2,-2) grid[xstep=1,ystep=1] (2,2);

\draw (0,0) node{$\bullet$};
\draw (1,-0.6) node{$2\rho$};

\draw [semithick] (2,2) -- (2,-1) -- (1,-2) -- (-2,-2) -- (2,2);
\draw (0.7,-1.7) node{$\Delta^+$};

\draw [very thick, -latex] (0,0) -- (1,-1);
\end{tikzpicture}
\begin{tikzpicture}
\draw [dotted] (-1,-2) grid[xstep=1,ystep=1] (2,1);

\draw (0,0) node{$\bullet$};
\draw (1,-0.6) node{$2\rho$};

\draw [semithick] (2,-1) -- (2,-2) -- (1,-2) -- (-1/2,-1/2) -- (1/2,1/2) -- (2,-1);
\draw (1.7,-1.7) node{$\Delta^+$};

\draw [very thick, -latex] (0,0) -- (1,-1);
\end{tikzpicture}
\end{center}

\begin{center}
\begin{tikzpicture}
\draw [dotted] (-1,-2) grid[xstep=1,ystep=1] (2,2);

\draw (0,0) node{$\bullet$};
\draw (1,-0.6) node{$2\rho$};

\draw [semithick] (2,2) -- (2,-2) -- (1,-2) -- (-1/2,-1/2) -- (2,2);
\draw (1.7,-1.7) node{$\Delta^+$};

\draw [very thick, -latex] (0,0) -- (1,-1);
\end{tikzpicture}
\begin{tikzpicture}
\draw [dotted] (-1,-3) grid[xstep=1,ystep=1] (2,1);

\draw (0,0) node{$\bullet$};
\draw (1,-0.6) node{$2\rho$};

\draw [semithick] (2,-1) -- (2,-3) -- (-1/2,-1/2) -- (1/2,1/2) -- (2,-1);
\draw (1.7,-2.2) node{$\Delta^+$};

\draw [very thick, -latex] (0,0) -- (1,-1);
\end{tikzpicture}
\begin{tikzpicture}
\draw [dotted] (-1,-3) grid[xstep=1,ystep=1] (2,1);

\draw (0,0) node{$\bullet$};
\draw (1,-0.6) node{$2\rho$};

\draw [semithick] (-1/2,-1/2) -- (1/2,1/2) -- (1,0) -- (2,-2) -- (2,-3) -- (-1/2,-1/2);
\draw (1.7,-2.2) node{$\Delta^+$};
\draw [very thick, -latex] (0,0) -- (1,-1);
\end{tikzpicture}
\end{center}

\begin{center}
\begin{tikzpicture}
\draw [dotted] (-1,-4) grid[xstep=1,ystep=1] (3,1);

\draw (0,0) node{$\bullet$};
\draw (1,-0.6) node{$2\rho$};

\draw [semithick] (-1/2,-1/2) -- (1/2,1/2) -- (1,0) -- (3,-4) -- (-1/2,-1/2);
\draw (1.7,-2.2) node{$\Delta^+$};
\draw [very thick, -latex] (0,0) -- (1,-1);
\end{tikzpicture}
\begin{tikzpicture}
\draw [dotted] (-1,-3) grid[xstep=1,ystep=1] (2,2);

\draw (0,0) node{$\bullet$};
\draw (1,-0.6) node{$2\rho$};

\draw [semithick] (-1/2,-1/2) -- (2,2) -- (2,-3) -- (-1/2,-1/2);
\draw (1.7,-2.2) node{$\Delta^+$};
\draw [very thick, -latex] (0,0) -- (1,-1);
\end{tikzpicture}
\end{center}

It is obvious that the last five do not admit Kähler-Einstein metrics. 
Computations show that the first three admit Kähler-Einstein metrics. 
We may also approximate, numerically, the unique 
holomorphic vector field such 
that the modified Futaki invariant vanishes (see \cite{TZ02}), 
then check numerically that the 
modified K-stability condition is satisfied.  

As an example, the precise coordinates of the barycenter 
$\mathrm{bar}_{DH}(\Delta^+)$ 
in the case of the third polytope are $(2343/1750,-2343/1750)$.
Remark that only the third and the fifth polytopes correspond to toroidal 
compactifications of $\mathrm{GL}_2(\mathbb{C})$.

Considering colored compactifications also allows to obtain smaller 
dimensional examples of group compactifications with vanishing Futaki 
invariant which are not K-semistable. Namely, the three smooth and Fano 
compactifications of the (semisimple but not simple) group 
$\mathrm{SO}_4(\mathbb{C})$, of dimension six, 
are colored, with the following moment polytopes:

\begin{center}
\begin{tikzpicture}
\draw [dotted] (0,-3) grid[xstep=1,ystep=1] (3,3);

\draw (0,0) node{$\bullet$};
\draw (2,0) node{$\bullet$};
\draw (1.6,0.3) node{$2\rho$};

\draw [semithick] (3,3) -- (0,0) -- (3,-3) -- (3,3);
\draw (2.7,2.3) node{$\Delta^+$};

\draw [very thick, -latex] (0,0) -- (1,-1);
\draw [very thick, -latex] (0,0) -- (1,1);
\end{tikzpicture}
\begin{tikzpicture}
\draw [dotted] (0,-3) grid[xstep=1,ystep=1] (3,3);

\draw (0,0) node{$\bullet$};
\draw (2,0) node{$\bullet$};
\draw (1.6,0.3) node{$2\rho$};

\draw [semithick] (3,3) -- (3,0) -- (3/2,-3/2) -- (0,0) -- (3,3);
\draw (2.7,2.3) node{$\Delta^+$};

\draw [very thick, -latex] (0,0) -- (1,-1);
\draw [very thick, -latex] (0,0) -- (1,1);
\end{tikzpicture}
\begin{tikzpicture}
\draw [dotted] (0,-3) grid[xstep=1,ystep=1] (3,3);

\draw (0,0) node{$\bullet$};
\draw (2,0) node{$\bullet$};
\draw (1.6,0.3) node{$2\rho$};

\draw [semithick] (3,3) -- (3,1) -- (2,-1) -- (3/2,-3/2) --(0,0) -- (3,3);
\draw (2.7,2.3) node{$\Delta^+$};

\draw [very thick, -latex] (0,0) -- (1,-1);
\draw [very thick, -latex] (0,0) -- (1,1);
\end{tikzpicture}
\end{center}

While the first example satisfies the K-stability criterion, both the 
two others do not satisfy the K-semistability criterion, while having 
a vanishing Futaki invariant since their groups of equivariant 
automorphisms are finite. Hence they do not admit any Kähler-Ricci solitons.
It would be interesting to know their full 
automorphism group and in particular if it is reductive. 
\end{exa}

\begin{exa}
There are examples of smooth and Fano symmetric varieties with Picard 
number one, which are not homogeneous under a larger group \cite{Ruz10,Ruz11}. 
Unlike the similar horospherical examples of Pasquier, these may very 
well be K-stable. For example, the smooth and Fano spherical embedding 
of $\mathrm{SL}_3(\mathbb{C})/\mathrm{SO}_3(\mathbb{C})$ with Picard number 
one, the smooth and Fano biequivariant compactifiation of the group $G_2$ 
with Picard number one, and the smooth and Fano biequivariant compactification 
of the group $\mathrm{SL}_3(\mathbb{C})$ admit Kähler-Einstein metrics.
Their respective moment polytopes are as follows. 

\begin{center}
\begin{tikzpicture}[scale=3/4]
\clip (-0.3,{-1.2*sqrt(3)/2}) rectangle (6.1,{6.2*sqrt(3)/2});

\newcommand*\rows{24}
    \foreach \row in {0, 1, ...,\rows} {
        \draw [dotted] ($({-\rows/2},{-sqrt(3)*\rows/6})+\row*(0.5, {0.5*sqrt(3)})$) -- ($({-\rows/2},{-sqrt(3)*\rows/6})+(\rows,0)+\row*(-0.5, {0.5*sqrt(3)})$);
        \draw [dotted] ($({-\rows/2},{-sqrt(3)*\rows/6})+\row*(1, 0)$) -- ($({-\rows/2+\rows/2},{-sqrt(3)*\rows/6+\rows/2*sqrt(3)})+\row*(0.5,{-0.5*sqrt(3)})$);
        \draw [dotted] ($({-\rows/2},{-sqrt(3)*\rows/6})+\row*(1, 0)$) -- ($({-\rows/2},{-sqrt(3)*\rows/6})+\row*(0.5,{0.5*sqrt(3)})$);
    }
    
\draw (3,{sqrt(3)}) node{$\bullet$};
\draw (3,{sqrt(3)-0.4}) node{$2\rho$};

\draw [very thick, -latex] (0,0) -- (0,{sqrt(3)});
\draw [very thick, -latex] (0,0) -- (3/2,{sqrt(3)/2});
\draw [very thick, -latex] (0,0) -- (3/2,{-sqrt(3)/2});

\draw [semithick] (6/2,{6*sqrt(3)/2}) -- (6,0) -- (0,0) -- (6/2,{6*sqrt(3)/2});
\end{tikzpicture}
\begin{tikzpicture}[scale=3/4]
\clip (-2,-0.3) rectangle (4,6.5);

\newcommand*\rows{33}
    \foreach \row in {0, 1, ...,\rows} {
        \draw [dotted] ($({-\rows/2},{-sqrt(3)*\rows/6})+\row*(0.5, {0.5*sqrt(3)})$) -- ($({-\rows/2},{-sqrt(3)*\rows/6})+(\rows,0)+\row*(-0.5, {0.5*sqrt(3)})$);
        \draw [dotted] ($({-\rows/2},{-sqrt(3)*\rows/6})+\row*(1, 0)$) -- ($({-\rows/2+\rows/2},{-sqrt(3)*\rows/6+\rows/2*sqrt(3)})+\row*(0.5,{-0.5*sqrt(3)})$);
        \draw [dotted] ($({-\rows/2},{-sqrt(3)*\rows/6})+\row*(1, 0)$) -- ($({-\rows/2},{-sqrt(3)*\rows/6})+\row*(0.5,{0.5*sqrt(3)})$);
    }
    
\draw (1,{3*sqrt(3)}) node{$\bullet$};
\draw (1,{3*sqrt(3)-0.4}) node{$2\rho$};

\draw [very thick, -latex] (0,0) -- (1,0);
\draw [very thick, -latex] (0,0) -- (1/2,{sqrt(3)/2});
\draw [very thick, -latex] (0,0) -- (-1/2,{sqrt(3)/2});
\draw [very thick, -latex] (0,0) -- (0,{sqrt(3)});
\draw [very thick, -latex] (0,0) -- (3/2,{sqrt(3)/2});
\draw [very thick, -latex] (0,0) -- (-3/2,{sqrt(3)/2});

\draw [semithick] (7/2,{7*sqrt(3)/2}) -- (0,0) -- (0,{7*sqrt(3)/2}) 
-- (7/2,{7*sqrt(3)/2});
\end{tikzpicture}
\end{center}

\begin{center}
\begin{tikzpicture}[scale=3/4]
\clip (-0.3,{-1.2*sqrt(3)/2}) rectangle (5.1,{5.2*sqrt(3)/2});

\newcommand*\rows{24}
    \foreach \row in {0, 1, ...,\rows} {
        \draw [dotted] ($({-\rows/2},{-sqrt(3)*\rows/6})+\row*(0.5, {0.5*sqrt(3)})$) -- ($({-\rows/2},{-sqrt(3)*\rows/6})+(\rows,0)+\row*(-0.5, {0.5*sqrt(3)})$);
        \draw [dotted] ($({-\rows/2},{-sqrt(3)*\rows/6})+\row*(1, 0)$) -- ($({-\rows/2+\rows/2},{-sqrt(3)*\rows/6+\rows/2*sqrt(3)})+\row*(0.5,{-0.5*sqrt(3)})$);
        \draw [dotted] ($({-\rows/2},{-sqrt(3)*\rows/6})+\row*(1, 0)$) -- ($({-\rows/2},{-sqrt(3)*\rows/6})+\row*(0.5,{0.5*sqrt(3)})$);
    }
    
\draw (3,{sqrt(3)}) node{$\bullet$};
\draw (3,{sqrt(3)-0.4}) node{$2\rho$};

\draw [very thick, -latex] (0,0) -- (0,{sqrt(3)});
\draw [very thick, -latex] (0,0) -- (3/2,{sqrt(3)/2});
\draw [very thick, -latex] (0,0) -- (3/2,{-sqrt(3)/2});

\draw [semithick] (5/2,{5*sqrt(3)/2}) -- (5,0) -- (0,0) -- (5/2,{5*sqrt(3)/2});
\end{tikzpicture}
\end{center}
\end{exa}

\bibliographystyle{alpha}
\bibliography{KSSV}

\end{document}